\date{\displaydate{date}}
\theoremstyle{plain}
 \newtheorem{thm}{\textbf{Theorem}}[section]
 \newtheorem{prop}{\textbf{Proposition}}[section]
 \newtheorem{lem}{\textbf{Lemma}}[section]
\theoremstyle{definition}
 \newtheorem{dfn}{\textbf{Definition}}[section]
\theoremstyle{remark}
 \newtheorem{rem}{\textbf{Remark}}[section]
 \numberwithin{equation}{section}
\DeclareMathAlphabet{\numbermathbb}{U}{bbold}{m}{n}
\renewcommand{\le}{\leqslant}
\title[Refactorisation of the Dirichlet convolution]{Refactorisation of the Dirichlet convolution}
\keywords{unitary, arithmetic, dirichlet, convolution, factorisation,  convolution, riemann hypothesis, zeta, euler product, prime number, L function, analytic number theory, zeta zeros, derivation}
\author[A.E.L]{\bfseries Ansar EL HASSANI} 
\address{ \centerline{ 
Department of Mathematics \\ 
City university \\ 
London\\
United kingdom } }
\address{\centerline{ 
Department of Mathematics \\ 
Ecole centrale marseille \\ 
Marseille\\
France } }
\email{biralimedel@gmail.com}
\begin{document}

\vspace{18mm} \setcounter{page}{1} \thispagestyle{empty}

\begin{abstract}
We present a new way to factor the dirichlet convolution for completely multiplicative functions whitch led us to constructing a ring that arise from the operations involved in the factorisation. We will conclude by some identities that was found during this work.  \newline
An application of the results gives us a generalisation of the following Hardy formula : 
\[ \zeta(x)^{2} = \zeta(2x)\sum_{m=1}^{+\infty} \frac{2^{\omega(m)}}{m^{x}}\]
which is : \newline
\begin{align*}
|\zeta(z)|^{2}
& = \zeta(2x)\sum_{m=1}^{+\infty}\frac{1}{m^{x}}2^{\omega(m)}\prod_{p | m , p \in \mathbb{P}}^{\omega(m)}\cos(y\ln(p^{v_{p}(m)})) \\
\end{align*}
with:  \newline
	x > 1  in Hardy's formula \newline
	z be a complex number with z = x+iy and  $ \Re(z) > 1 $ \newline
	$ \omega(m) $ : number of unique primes in m \newline
	$ v_{p}(m) $ : power of the prime p in m  \newline
	$ \mathbb{P} $ : set of the prime number
\end{abstract}

\maketitle

\centerline{\displaydate{date}}


\tableofcontents

\allowdisplaybreaks

\section{Introduction}
We will begin by writing some definitions that will be used throughout this article, then we will state the results obtained starting with what we see as the most important ones. 
In the last sections, we will prove our claims and discuss possible applications and future investigations over this work.

\begin{dfn}

The zeta series was discovered by Leonhard Euler and is defined as follows:
\[ \forall s \in \mathbb{C} , ~   \Re(s)>1 \: ~ \zeta(s)  = \sum_{n=1}^{+\infty} \frac{1}{n^{s}} \]

\end{dfn}

The series $ \sum_{n=1}^{+\infty} \frac{1}{n^{s}} $ does not converge for $ \Re(s) \le  1 $ , however we can define an extension (unique in the sense of the analytic extension) on the complex plane, except in $ s = 1 $.
\\

This analytic extension is what is called the zeta function \cite{Riemann} \cite{Edwards}  \cite{Titchmars} .

\begin{dfn}

Euler has also established the "Eulerian" product associated with the zeta function:

\[ \forall s \in \mathbb{C} , ~   \Re(s)>1  ~  \zeta(s) = \sum_{n=1}^{+\infty} \frac{1}{n^{s}} = \prod_{p \in \mathbb{P}} \frac{1}{1-\frac{1}{p^{s}}} \]

\end{dfn}

To understand more easily the importance of this formula, it is more appropriate to write it as follows:

\[ \forall s \in \mathbb{C} ,  ~  \Re(s)>1  ~  \prod_{p \in \mathbb{P}} \frac{1}{1-\frac{1}{p^{s}}} = \prod_{p \in \mathbb{P}} 1+\frac{1}{p^{s}}+\frac{1}{p^{2s}}+\frac{1}{p^{3s}}+... \]

\[ \forall s \in \mathbb{C} ,  ~  \Re(s)>1  ~ \zeta(s)  = \sum_{n=1}^{+\infty} \frac{1}{n^{s}} = 1+\frac{1}{2^{s}}+\frac{1}{3^{s}}+\frac{1}{4^{s}}+...  \]

The idea is that the formula $ \sum_{n=1}^{+\infty} \frac{1}{n^{s}} $ presents the integers as a sequence of ordered numbers separated  by a fixed step (additive, metric and ordered vision) , while the formula $ \prod_{p \in \mathbb{P}} \frac{1}{1-\frac{1}{p^{s}}} $ is an integer combination of different powers of prime numbers (multiplicative and combinatory vision). \\

\begin{dfn}

Riemann proved \cite{Riemann} that the zeta function satisfies the following functional equation:

\[ \forall s \in \mathbb{C}- \left\{ 1 \right\} ~ : ~ \zeta(s) = 2^s \pi^{s-1} \sin \left( \frac{\pi s}2\right) \Gamma(1-s)\zeta(1-s) \]

\end{dfn}

\begin{rem}

This result expresses a relation between the values $ \zeta(s) $ and $ \zeta(1-s) $, and more particularly, it indicates that the set of zeros of the zeta function is symmetrical with respect to the line. $ \Re(s) = \frac{1}{2} $.

\end{rem}

The Riemann Hypothesis describes a pattern, a regularity that prime numbers are supposed to follow. This pattern is the fact that they are, at least in part, distributed pseudo randomly. Knowing this information allows us to better frame them.

One of the strategies that mathematicians have applied to try to answer this question is the generalization \cite{Connes} :

They tried to define several classes of functions having the same fundamental properties of the zeta function (Eulerian product, functional equation, Riemann hypothesis ...) and sought to understand these new functions. This allows to focus only on the important properties related to the Riemann hypothesis.
\\

Several classes of functions have been defined, some of these courses are at the heart of a major research program such as Polya-Hilbert program and The Langlands program .

\section{Acknowledgement}
I would like to thank my supervisor Yang Hui He for all the time and the discussions he gave me several times a week for the duration of this research.

I thank my doctoral colleagues for presenting me their work, for explaining me some concepts and for the time we shared together.

I thank my roommates, with whom this trip to London was an excellent experience.

I thank Ibtihaj Sadiki, Mohammed Allali, Nizar Bellazrak, Olivier Roux, Oumaima Sadiki, and especially my father for their many corrections and re-readings.

I thank Olivier Ramaré for his review and his many corrections and comments.

I thank my parents, family and friends for everything.
\section{Definitions}
We will start the definitions by recalling one of the most important family of functions that arise in many of the problems in number theory

\begin{dfn}
	The set of multiplicative function is defined by the following properties :
	\[
	F \in \mathbb{M} \iff 
	\begin{array}{l rcl}
	{F} \textnormal{ : }  \mathbb{N}^{\star}  \longrightarrow  \mathbb{C} \\
	F(1)=1 \\
	(a , b) =1 \Rightarrow F(ab)=F(a)F(b) \\
	\end{array}
	\]

\end{dfn}

We also often encounter a smaller family of functions $ \mathbb{M}_{c} $ yet more simple than the multiplicative fuctions of the set $ \mathbb{M} $

\begin{dfn}
	The set of completely multiplicative function $ \mathbb{M}_{c} $ is describled by : 
	\[
	F \in \mathbb{M}_{c} \iff 
	\begin{array}{l rcl}
	F \in \mathbb{M} \\
	a,b \in \mathbb{N^{\star}}  : F(ab)=F(a)F(b) \\
	\end{array}
	\]
\end{dfn}

\begin{rem}
	The set $ \mathbb{M}_{c} $ is stable by multiplication, which is useful for building structures over it
\end{rem}

$ \mathbb{M}_{c} $ family contains many functions that are not particularly linked with number theory as the constraints defining the set are general
As an example , we could give the exponential $ x \rightarrow \exp(x) $  , and the module $ x \rightarrow |x| $
$ \\ $

Conversely, the set $ \mathbb{M} $ contains many important functions in arithmetic, one of the most useful in analytic number theory being Dirichlet's characters

\begin{rem}
	 In this paper , we will use the following notation $ \\ $
	 $ \forall a , b \in \mathbb{N^{\star}} ~  ~ (a , b) $ : refere to the grand comun divisor of a and b  
\end{rem}

\begin{dfn}
	The omega function $ \omega $ is defined as follow : 
	\[
	\forall n \in \mathbb{N}^{*} ~ \omega(n) = \sum_{p|n} 1
	\]
\end{dfn}

\begin{dfn}
	A Dirichlet character is a multiplicative function that verify :
	\[
	{\chi} \textnormal{ : }  \mathbb{N}  \longrightarrow  \mathbb{C} 
	\]
	\[\forall a,b \in \mathbb{N} \Rightarrow \chi(ab)=\chi(a)\chi(b)\]
	\[\exists k \in \mathbb{N^{\star}} , \forall n \in \mathbb{N} ~ \chi(n) = \chi(n + k) \]
	\[ \forall n \in \mathbb{N^{\star}} ~ (n , k) > 1 \Rightarrow \chi(n) = 0
	\]
\end{dfn}

\begin{rem}
	A Dirichlet character define a multiplicative function.
\end{rem}

\begin{dfn}
	The generative dirichlet serie associated with a multiplicative function $ F $  is defined as :
	\[\forall F \in \mathbb{M} , \forall s \in \mathbb{C} , \Re(s) > \sigma  : ~ D(F,s)=\sum_{n=1}^{\infty} \frac{F(n)}{n^{s}} \]
	where $\sigma $ define a reagion of the complex plan $\mathbb{C}$ where the serie converge.

\end{dfn}

The dirichlet characters define a family of functions that generalized some of the specificities of the rieman zêta function, and thus gave rise to a new more general conjecture than Riemann hypothesis.

\begin{dfn}
	The L series are the Dirichlet generative series associated with a dirichlet character :
	\[\forall s \in \mathbb{C} , \Re(s)>1 : ~ L(\chi,s) = \sum_{n=1}^{\infty} \frac{\chi(n)}{n^{s}} \]
	
\end{dfn}

\begin{dfn}
	The L functions are the analytical extention of the L series
	
\end{dfn}

\begin{rem}
	The Riemann zêta function is the L serie for the caracter $ \mathds{1} $. \[ \forall n \in \mathbb{N} : \mathds{1}(n) = 1  \] 
\end{rem}

\begin{dfn}
	The generalised riemann hypothesis (GRH) is the assertion that the following proposition is true for all L functions :
	
	\[ \forall s \in \mathbb{C} ~,~ 0<\Re(s)<1 : L(\chi,s)=0 \Rightarrow \Re(s)=\frac{1}{2} \]
	
\end{dfn}

\begin{rem}
	The set of complexe number that have $  0<\Re(s)<1   $ is often called the critical band and the ligne of complexe number defined by  $ \Re(s)=\frac{1}{2} $ is called the critical ligne .
\end{rem}

We will now present some operations that has been studied in this work and its associated ring.

\begin{dfn}
	We introduct the weight functions $ W $ as follow :
	$$
	\begin{array}{l rcl}
	W  :   {\mathbb{N}^{\star}}^{2}  \longrightarrow  \mathbb{C} \\
	\end{array}
	$$
	
\end{dfn}

\begin{dfn}
	We define the operation $\, \underset{w}{\scalebox{0.6}{$\square$}} \, $, which we will call the W-convolution :
	$$ \forall F,G \in \mathbb{M} , \forall m \in \mathbb{N}^{\star}  :   [F  \, \underset{w}{\scalebox{0.6}{$\square$}} \, G](m) = \sum_{ab=m} F(a)G(b)W(a,b) $$

\end{dfn} 

\begin{rem}
For notation purpuses , we will write this notation simply  $\, \scalebox{0.6}{$\square$} \, $ .
\end{rem}

\begin{dfn}
	We define the operation (the multiplication) $ \times $ as follow : 
	$$ \forall F,G \in \mathbb{M} , \forall m \in \mathbb{N}^{\star}  :   [F \times G](m) = F(m)G(m) $$ 
	
\end{dfn}

Here we will define what we mean by a derivation over a ring

\begin{dfn}
	A derivation $ \nabla $ over $ \mathbb{M} $ is an operator how act over the set $ \mathbb{M} $ which respect :
	\[ \forall F,G \in \mathbb{M} ~,~ \nabla(F ~ \scalebox{0.6}{$\square$} ~ G) = \nabla(F) ~ \scalebox{0.6}{$\square$} ~ \nabla(G)  \]
	\[ \forall F,G \in \mathbb{M} ~,~ \nabla(F \times G) = \nabla(F) \times G ~ \scalebox{0.6}{$\square$} ~  F \times \nabla(G) \]

\end{dfn}

We will end this reminder of the definitions used here by recalling the Dirichlet ring of arithmetic functions:

\begin{dfn}
	The  Dirichlet ring $ (\Omega, \star ,.) $ of arithmetic functions is constructed as follows:
	\[ F \in \Omega \iff F \textnormal{ : }  \mathbb{N}  \longrightarrow  \mathbb{C} \]
	\[ \forall F,G \in \Omega \textnormal{ : }  [F \star G] (n)= \sum_{ab=n} F(a)G(b)  \]
	\[ \forall F,G \in \Omega \textnormal{ : }  [F . G] (n)= F(n)G(n)  \]
	
\end{dfn}

\begin{rem}
The operation $ \star $ used in the ring is called the Dirichlet convolution
	\[ \forall F,G \in \Omega \textnormal{ : }  [F \star G] (n)= \sum_{ab=n} F(a)G(b)  \]
\end{rem}

\section{Results}
We will present the results that was established throughout this work starting by what we sees as the most original ones.

Using this convolution and multiplication over the set of multiplicative functions $ \mathbb{M} $ , we have constructed the following result 

\begin{restatable}[Ring unicity theorem]{thm}{unicity}\label{thm:unicity}

	\[ (\mathbb{M},  \scalebox{0.6}{$\square$}  , \times ) ~ \, \textnormal{is a commutative ring } \iff 
	\begin{array}{l rcl}
	\forall a,b \in \mathbb{N}^{\star}  :  W(a,b) =
	\begin{cases}
	\begin{array}{ll}
	1 \,\textnormal{ if } \, (a ,  b) = 1 \\
	0 \,\textnormal{ if not}
	\end{array}
	\end{cases}
	\end{array}  \]

\end{restatable}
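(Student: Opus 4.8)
The plan is to prove the two implications separately, treating the reverse direction as a direct verification and reserving the real work for the forward direction. For the sufficiency direction ($\Leftarrow$), substituting the proposed weight turns the operation into the unitary convolution $[F\scalebox{0.6}{$\square$}G](m)=\sum_{ab=m,\,(a,b)=1}F(a)G(b)$, and I would check the ring axioms one at a time. Closure holds because the value at $1$ is $W(1,1)=1$ and the unitary convolution of two multiplicative functions is again multiplicative (seen by splitting each divisor of a product $mn$ with $(m,n)=1$ into its $m$-part and $n$-part). Next, $(\mathbb{M},\scalebox{0.6}{$\square$})$ is an abelian group: it is commutative since $W$ is symmetric, associative since both triple products collapse to $\sum_{abc=m,\text{ pairwise coprime}}F(a)G(b)H(c)$, it has neutral element $\varepsilon$ with $\varepsilon(1)=1$ and $\varepsilon(n)=0$ for $n>1$, and the $\scalebox{0.6}{$\square$}$-inverse of $F$ is obtained by setting it equal to $-F$ on prime powers and extending multiplicatively (one checks equality at prime powers, where the only coprime factorisations are trivial). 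Finally $(\mathbb{M},\times)$ is a commutative monoid with neutral element $\mathds{1}$, and distributivity is the conceptual heart: since $(a,b)=1$ forces $F(a)F(b)=F(ab)=F(m)$, one gets $[(F\times G)\scalebox{0.6}{$\square$}(F\times H)](m)=\sum_{ab=m,\,(a,b)=1}F(a)F(b)G(a)H(b)=F(m)[G\scalebox{0.6}{$\square$}H](m)=[F\times(G\scalebox{0.6}{$\square$}H)](m)$.

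For the necessity direction ($\Rightarrow$), the main tool I would use is a coefficient-extraction principle: because a multiplicative function is determined by freely chosen values on prime powers, any identity of the form $\sum_{ab=m}(\cdots)F(a)G(b)\cdots=(\cdots)$ valid for all multiplicative arguments can be read as a polynomial identity in the independent variables $F(p^j),G(p^j),\dots$, and the monomials attached to distinct divisor decompositions $ab=m$ are themselves distinct; hence coefficients must match decomposition by decomposition. Applying this to distributivity gives $[F(ab)-F(a)F(b)]\,W(a,b)=0$ for every $a,b$ and every multiplicative $F$; when $(a,b)>1$ I would exhibit an explicit $F$, vanishing on the lower powers of a common prime $p$ and equal to $1$ on the exponent $v_p(a)+v_p(b)$, for which $F(ab)=1\neq0=F(a)F(b)$, forcing $W(a,b)=0$. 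Applying the same principle to commutativity of $\scalebox{0.6}{$\square$}$ yields the symmetry $W(a,b)=W(b,a)$, and applying it to the neutral-element equation $F\scalebox{0.6}{$\square$}E=F$ (matching the coefficient of the monomial $F(m)$, coming from the decomposition $a=m$, $b=1$) yields $W(m,1)=1$ for all $m$, hence $W(1,m)=1$ by symmetry, while simultaneously forcing $E=\varepsilon$.

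It remains to force $W(a,b)=1$ for coprime $a,b$ with $a,b>1$, where distributivity gives no information since its coefficient $F(ab)-F(a)F(b)$ already vanishes. Here I would use closure again, namely the multiplicativity of $F\scalebox{0.6}{$\square$}G$: expanding $[F\scalebox{0.6}{$\square$}G](mn)=[F\scalebox{0.6}{$\square$}G](m)\,[F\scalebox{0.6}{$\square$}G](n)$ for $(m,n)=1$ and matching coefficients gives the factorisation $W(c_1c_2,d_1d_2)=W(c_1,d_1)\,W(c_2,d_2)$ whenever $c_1d_1=m$, $c_2d_2=n$ and $(m,n)=1$; specialising to $c_1=a$, $d_1=1$, $c_2=1$, $d_2=b$ then gives $W(a,b)=W(a,1)\,W(1,b)=1$. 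I expect the coefficient-extraction step to be the genuine obstacle: one must carefully handle the multi-prime bookkeeping, verifying that the monomials indexed by distinct divisor decompositions (or, for closure, by distinct quadruples $(c_1,d_1,c_2,d_2)$) are genuinely linearly independent, so that the ring identities may legitimately be decoupled into one scalar equation per decomposition.
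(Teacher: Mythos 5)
Your proposal is correct and follows essentially the same route as the paper's proof: both directions are handled axiom by axiom, with distributivity forcing $W(a,b)=0$ on non-coprime pairs and closure (the paper's stability lemma) together with the neutral element forcing $W(a,b)=1$ on coprime pairs. The only substantive difference is implementation: the coefficient-extraction step you single out as the genuine obstacle is exactly what the paper dissolves by testing the identities on its indicator functions $\mathds{1}_a$ (the multiplicative function equal to $1$ precisely at $1$ and at the prime powers $p^{v_p(a)}$ exactly dividing $a$), for which, e.g., $[\mathds{1}_a \, \square \, \mathds{1}_b](ab)=W(a,b)$ isolates a single divisor decomposition with no linear-independence bookkeeping; relatedly, the paper pins down $W$ on prime-power pairs first and extends to all pairs via a factorization lemma for $W$ (a consequence of stability and the identity element), whereas you treat arbitrary pairs directly.
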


The operations used to define the ring  $ (\mathbb{M},  \scalebox{0.6}{$\square$}  , \times ) $ could then be used to express the following equalities witch allows us to give a new formula for the dirichlet series

\begin{restatable}[Refactoristion theorem]{thm}{refactoristion}
\label{thm:refactoristion}

	\[\forall F,G \in \mathbb{M}_{c} :  D(F \star G,s) = D(F \times G,2s) \times D( F \square  G ,s) \]
	
\end{restatable}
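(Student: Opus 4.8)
The plan is to reduce the claimed equality of Dirichlet series to a purely local statement about Euler factors, valid on a half-plane $\Re(s)>\sigma$ on which all the series involved converge absolutely. The starting point is the standard fact that the Dirichlet series of a Dirichlet convolution factors as a product: since $\star$ is exactly the convolution defining $\Omega$, one has $D(F\star G,s)=D(F,s)\,D(G,s)$ in the region of absolute convergence. This disposes of the left-hand side and turns the theorem into the assertion
\[ D(F,s)\,D(G,s) = D(F\times G,2s)\cdot D(F\,\square\, G,s). \]

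Next I would expand every factor as an Euler product, exploiting that $F,G\in\mathbb{M}_{c}$. Complete multiplicativity gives $F(p^{k})=F(p)^{k}$, hence $D(F,s)=\prod_{p\in\mathbb{P}}(1-F(p)p^{-s})^{-1}$, and likewise for $G$. A one-line check shows that $F\times G$ is again completely multiplicative, so $D(F\times G,2s)=\prod_{p}(1-F(p)G(p)p^{-2s})^{-1}$. The remaining ingredient is the Euler factor of $F\,\square\, G$. Here I would invoke Theorem \ref{thm:unicity}: with the unitary weight $W$, the operation $\square$ keeps us inside $\mathbb{M}$, so $F\,\square\, G$ is multiplicative and is determined by its values at prime powers. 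Since $(a,b)=1$ together with $ab=p^{k}$ forces $\{a,b\}=\{1,p^{k}\}$, one obtains $[F\,\square\, G](p^{k})=F(p)^{k}+G(p)^{k}$ for $k\ge 1$.

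The computational heart, where the refactorisation actually takes place, is summing this local series. Writing $u=F(p)p^{-s}$ and $v=G(p)p^{-s}$, I would compute
\[ 1+\sum_{k\ge 1}\frac{F(p)^{k}+G(p)^{k}}{p^{ks}} = 1+\frac{u}{1-u}+\frac{v}{1-v} = \frac{1-uv}{(1-u)(1-v)}, \]
so the Euler factor of $D(F\,\square\, G,s)$ equals $\dfrac{1-F(p)G(p)p^{-2s}}{(1-F(p)p^{-s})(1-G(p)p^{-s})}$. Multiplying by the factor $(1-F(p)G(p)p^{-2s})^{-1}$ coming from $D(F\times G,2s)$, the term $1-uv$ cancels and leaves precisely $(1-u)^{-1}(1-v)^{-1}$, which is the Euler factor of $D(F,s)\,D(G,s)$. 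Taking the product over all $p\in\mathbb{P}$ yields the identity.

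I expect the genuine difficulty to be bookkeeping rather than ideas. One must fix a common half-plane of absolute convergence so that each Euler product is legitimate and the products may be multiplied and rearranged factor by factor, and one must be careful that $F\,\square\, G$ is only multiplicative (not completely multiplicative), so that its values at prime powers — and no larger data — enter the computation. The algebraic cancellation $1+\frac{u}{1-u}+\frac{v}{1-v}=\frac{1-uv}{(1-u)(1-v)}$ is elementary, but it is the crux of the statement and would be worth isolating as a short lemma before assembling the Euler products.
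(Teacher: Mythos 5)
Your proposal is correct, but it takes a genuinely different route from the paper's own proof. The paper never forms an Euler product: it argues globally on the double series, reindexing each pair $(n,q)$ as $(mr,kr)$ with $(m,k)=1$ and $r$ the common factor (the gcd), so that complete multiplicativity splits $F(mr)G(kr)=F(r)G(r)\cdot F(m)G(k)$; the gcd variable $r$ then assembles into $D(F\times G,2s)$, while the coprime pairs, regrouped by their product $l=mk$, assemble into $D(F\,\square\,G,s)$. You instead localise at each prime: stability of $\mathbb{M}$ under $\square$ (Lemma \ref{lem:stability}, subsumed in Theorem \ref{thm:unicity}) makes $F\,\square\,G$ multiplicative, coprimality forces $[F\,\square\,G](p^{k})=F(p)^{k}+G(p)^{k}$, and the whole identity collapses to the rational cancellation $(1-uv)^{-1}\cdot\frac{1-uv}{(1-u)(1-v)}=\frac{1}{(1-u)(1-v)}$ with $u=F(p)p^{-s}$, $v=G(p)p^{-s}$. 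Each approach buys something: the paper's rearrangement needs no Euler-product theory and exhibits directly which part of each pair ($r$ versus $(m,k)$) feeds which factor of the refactorisation; yours explains structurally why the correction factor $D(F\times G,2s)$ must appear at all --- it is exactly $\prod_{p}(1-uv)^{-1}$ --- at the cost of invoking the Euler-product expansion of a merely multiplicative (not completely multiplicative) function and of justifying factor-by-factor multiplication of infinite products, which your convergence bookkeeping rightly flags. Both proofs share the same first step, $D(F\star G,s)=D(F,s)\,D(G,s)$, and both are valid on a common half-plane of absolute convergence.
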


a possible reformulation could be :

\begin{prop}
\textnormal{for F and G completly multiplicatives :}

	\[\forall F,G \in \mathbb{M}_{c} :  D(F,s) \times D(G,s) = D(F \times G,2s) \times D( F \square  G ,s) \]
	
\end{prop}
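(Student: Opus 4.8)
The plan is to obtain this reformulation as an immediate consequence of the Refactorisation theorem (Theorem~\ref{thm:refactoristion}) combined with the classical fact that the Dirichlet convolution $\star$ is exactly the operation whose generating series is the product of the two individual generating series. First I would establish, for $F,G \in \mathbb{M}_{c}$ and $s$ lying in a common half-plane of absolute convergence, the elementary identity
\[ D(F,s)\, D(G,s) = D(F \star G, s). \]

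To prove this identity I would expand the product as the double series $\sum_{a \geq 1} \sum_{b \geq 1} \frac{F(a)G(b)}{(ab)^{s}}$ and then reindex the terms according to the value $n = ab$. Collecting the contributions with $ab = n$ gives
\[ D(F,s)\,D(G,s) = \sum_{n \geq 1} \frac{1}{n^{s}} \sum_{ab = n} F(a)G(b) = \sum_{n \geq 1} \frac{[F \star G](n)}{n^{s}} = D(F \star G, s), \]
which is the desired identity. The only step that requires justification is the rearrangement of the double series into a single series indexed by $n$; this is legitimate precisely because both $D(F,s)$ and $D(G,s)$ converge \emph{absolutely} on the region under consideration, so the terms may be reordered freely.

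With this in hand, the proposition follows by a single substitution. Theorem~\ref{thm:refactoristion} states that $D(F \star G, s) = D(F \times G, 2s)\, D(F \square G, s)$ for completely multiplicative $F$ and $G$; replacing $D(F \star G, s)$ by the product $D(F,s)\,D(G,s)$ immediately yields
\[ D(F,s)\, D(G,s) = D(F \times G, 2s)\, D(F \square G, s), \]
which is exactly the stated equality. Here the symbol $\times$ occurring between the generating series is understood as the ordinary product of the two complex numbers.

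I do not anticipate a genuine obstacle, since all the substantive work is already carried out inside Theorem~\ref{thm:refactoristion}, and the present statement is essentially a corollary of it. The only matter demanding care is the bookkeeping of the domain of validity: I would assert the identity on the intersection of the half-planes where $D(F,s)$, $D(G,s)$, $D(F \square G, s)$ and $D(F \times G, 2s)$ all converge absolutely, and then, if desired, extend the equality to the full common region of analyticity by uniqueness of analytic continuation.
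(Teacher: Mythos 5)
Your proposal is correct and matches the paper's treatment: the paper presents this proposition as a direct reformulation of Theorem~\ref{thm:refactoristion}, whose own proof opens with exactly the identity $D(F \star G,s) = D(F,s) \times D(G,s)$ that you establish and then substitute. Your added care about absolute convergence and the domain of validity is a welcome refinement, but the route is essentially the same.
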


We then have the following useful related result

\begin{restatable}{prop}{realimsplit}\label{prop:realimsplit}

\[  
\forall F , G \in \mathbb{M}_{c} : ~ D(F,s) \times D(G,\overline{s}) = D(F \times G,2x) \times D( F \times \text{Id}_{e}^{-iy} \square G\times \text{Id}_{e}^{iy} , x ) \]
	
\end{restatable}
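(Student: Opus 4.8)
The plan is to deduce this from the already-established reformulation of the Refactorisation theorem (Theorem~\ref{thm:refactoristion}) by absorbing the imaginary part of $s$ into the two arithmetic functions. Writing $s = x + iy$, the key observation is that $n^{-s} = n^{-x}\,n^{-iy}$, so that
\[ D(F,s) = \sum_{n=1}^{\infty}\frac{F(n)\,n^{-iy}}{n^{x}} = D\bigl(F \times \text{Id}_e^{-iy},\,x\bigr), \]
where $\text{Id}_e^{-iy}$ denotes the completely multiplicative function $n \mapsto n^{-iy}$. Symmetrically, since $\overline{s} = x - iy$, one has $D(G,\overline{s}) = D\bigl(G \times \text{Id}_e^{iy},\,x\bigr)$. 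This rewrites the left-hand side purely as a product of two Dirichlet series evaluated at the \emph{real} point $x$.

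Next I would set $F' = F \times \text{Id}_e^{-iy}$ and $G' = G \times \text{Id}_e^{iy}$. Both lie in $\mathbb{M}_c$: the maps $n \mapsto n^{\pm iy}$ are completely multiplicative, and $\mathbb{M}_c$ is stable under $\times$ by the Remark following its definition. I can therefore apply the reformulation to the pair $(F',G')$ at argument $x$, giving
\[ D(F',x) \times D(G',x) = D\bigl(F' \times G',\,2x\bigr) \times D\bigl(F' \scalebox{0.6}{$\square$} G',\,x\bigr). \]
It then remains to simplify the two factors on the right. For the first, pointwise multiplication gives $[F' \times G'](n) = F(n)\,n^{-iy}\,G(n)\,n^{iy} = F(n)G(n) = [F \times G](n)$, so $F' \times G' = F \times G$ and hence $D(F' \times G',2x) = D(F \times G,2x)$. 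The second factor is, by the very definition of $F'$ and $G'$, exactly $D\bigl(F \times \text{Id}_e^{-iy} \scalebox{0.6}{$\square$} G \times \text{Id}_e^{iy},\,x\bigr)$, which is the claimed expression.

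The substance of the argument is entirely contained in this bookkeeping, so there is no deep obstacle; the only points requiring care are (i) confirming that $\text{Id}_e^{\pm iy} \in \mathbb{M}_c$, so that the hypotheses of the cited result are genuinely met, and (ii) checking convergence. Since $|n^{\pm iy}| = 1$, multiplying by $\text{Id}_e^{\pm iy}$ leaves the abscissa of convergence unchanged, and the constraint $\Re(s) = x > 1$ in the statement guarantees that all the series involved ($D(F,s)$, $D(G,\overline{s})$, $D(F\times G,2x)$, and the W-convolution series at $x$) converge absolutely, so that the termwise manipulations and the appeal to the Refactorisation identity are legitimate.
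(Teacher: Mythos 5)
Your proof is correct, but it takes a genuinely different route from the paper. The paper does not invoke Theorem~\ref{thm:refactoristion} at all: it re-runs the direct computation from scratch, expanding $D(F,s)\,D(G,\overline{s})$ as a double sum, splitting each pair $(n,q)$ into a common factor $r$ and a coprime pair $(m,k)$, and observing that the common part contributes $r^{-(s+\overline{s})}=r^{-2x}$ while the coprime part carries the phases $m^{-iy}k^{iy}$, which are then folded into the $W$-convolution. You instead absorb the phases once and for all at the level of the series, via $D(F,s)=D(F\times \text{Id}_{e}^{-iy},x)$ and $D(G,\overline{s})=D(G\times \text{Id}_{e}^{iy},x)$, and then apply the already-established identity to the pair $(F\times \text{Id}_{e}^{-iy},\,G\times \text{Id}_{e}^{iy})$ at the real point $x$, using $\text{Id}_{e}^{-iy}\times\text{Id}_{e}^{iy}=\mathds{1}$ to recover $D(F\times G,2x)$. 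What your approach buys is modularity and brevity: the rearrangement/bijection argument is verified only once (inside the theorem), and the two checks you flag --- that $\text{Id}_{e}^{\pm iy}\in\mathbb{M}_{c}$ and that multiplying by a unimodular completely multiplicative function preserves the abscissa of absolute convergence --- are precisely the hypotheses needed to quote it. What the paper's re-derivation buys is self-containedness: the $r^{2x}$ factor appears explicitly in the computation, and no appeal to the theorem at a shifted (real) argument is required, at the cost of repeating the whole double-sum manipulation. The two arguments are mathematically equivalent in substance, and your version is arguably the cleaner presentation; indeed the paper's own computation first produces the factor $D\bigl(\tfrac{F}{\text{Id}_{e}^{iy}} \,\square\, \tfrac{G}{\text{Id}_{e}^{-iy}},x\bigr)$ and only afterwards restates it in the product form that you obtain directly.
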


This last proposition could have some direct applications as the $ D(F \times G,2x) $ section does not depend on y ( $s \coloneqq x + iy $ )
\newline
For example , it allows us to generalise one of Hardy's formula  \cite{Hardy} :

\begin{prop}
$\forall z \in \mathbb{C} , \Re(z) > 1 :$
	\begin{align*}
	\zeta(z)\zeta(\bar{z})= |\zeta(z)|^{2}
	& = \zeta(2x)\sum_{m=1}^{+\infty}\frac{1}{m^{x}}2^{\omega(m)}\prod_{j=1}^{\omega(m)}\cos(y\ln(p_{j}^{v_{p_{j}}(m)})) \\
	\end{align*}
	
\textnormal{This formula makes it possible to deduce a known result of Hardy by taking $ y = 0 $ :}
	\[ \zeta(x)^{2} = \zeta(2x)\sum_{m=1}^{+\infty} 
	\frac{2^{\omega(m)}}{m^{x}}\]
	
\end{prop}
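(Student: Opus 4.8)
The plan is to specialise Proposition~\ref{prop:realimsplit} to the constant character. Take $F=G=\mathds{1}$ and $s=z=x+iy$ with $x=\Re(z)>1$. Both $\mathds{1}$ and the functions $\text{Id}_e^{\pm iy}:n\mapsto n^{\pm iy}$ lie in $\mathbb{M}_c$, so the hypotheses of the proposition hold. Since $D(\mathds{1},s)=\zeta(s)$, the left-hand side becomes $\zeta(z)\,\zeta(\bar z)$; because the defining series converges absolutely for $\Re(z)>1$ one has $\zeta(\bar z)=\overline{\zeta(z)}$, whence $\zeta(z)\zeta(\bar z)=|\zeta(z)|^{2}$. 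For the right-hand side, the pointwise product satisfies $\mathds{1}\times\mathds{1}=\mathds{1}$, so the first factor is $D(\mathds{1},2x)=\zeta(2x)$, matching the prefactor in the claim.

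The crux is to identify the arithmetic function $H:=\big(\mathds{1}\times\text{Id}_e^{-iy}\big)\,\scalebox{0.6}{$\square$}\,\big(\mathds{1}\times\text{Id}_e^{iy}\big)$, that is, the $W$-convolution of $a\mapsto a^{-iy}$ and $b\mapsto b^{iy}$. By the Ring unicity theorem~\ref{thm:unicity}, $W$ is the coprimality indicator, so $[A\,\scalebox{0.6}{$\square$}\,B](m)=\sum_{ab=m,\,(a,b)=1}A(a)B(b)$ is the unitary convolution. Both convolved functions are multiplicative, and the unitary convolution of multiplicative functions is again multiplicative; hence $H$ is multiplicative and it suffices to evaluate it on prime powers. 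For $m=p^{a}$ the only coprime factorisations $ab=p^a$ are $(1,p^a)$ and $(p^a,1)$, so
\[
H(p^{a})=p^{aiy}+p^{-aiy}=2\cos\!\big(y\ln p^{a}\big).
\]

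Multiplying over the distinct prime powers of $m=\prod_{j}p_{j}^{a_j}$, multiplicativity gives $H(m)=\prod_{j}2\cos\!\big(y\ln p_{j}^{\,v_{p_j}(m)}\big)=2^{\omega(m)}\prod_{j=1}^{\omega(m)}\cos\!\big(y\ln p_{j}^{\,v_{p_j}(m)}\big)$. Forming $D(H,x)=\sum_{m\geq1}H(m)/m^{x}$ therefore reproduces exactly the claimed sum, and combining it with the factor $\zeta(2x)$ yields the stated identity. The Hardy corollary follows by setting $y=0$: every cosine collapses to $1$, so $H(m)=2^{\omega(m)}$ and the formula reduces to $\zeta(x)^{2}=\zeta(2x)\sum_{m\geq1}2^{\omega(m)}/m^{x}$.

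I expect the only genuinely delicate point to be justifying absolute convergence so that the formal manipulation underlying Proposition~\ref{prop:realimsplit} is legitimate. For $x=\Re(z)>1$ each summand is dominated in modulus by $2^{\omega(m)}/m^{x}$, and $\sum_{m\geq1}2^{\omega(m)}/m^{x}=\zeta(x)^{2}/\zeta(2x)<\infty$; thus every series converges absolutely and the rearrangements are valid. The secondary point to record is the multiplicativity of the unitary convolution, which I would either read off from the ring structure established in Theorem~\ref{thm:unicity} or verify directly on coprime arguments.
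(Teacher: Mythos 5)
Your proof is correct and follows exactly the route the paper intends: the paper presents this proposition as a direct specialisation of Proposition~\ref{prop:realimsplit} with $F=G=\mathds{1}$, and your evaluation $[\text{Id}_{e}^{-iy}\,\square\,\text{Id}_{e}^{iy}](p^{a})=p^{aiy}+p^{-aiy}=2\cos(y\ln p^{a})$ together with multiplicativity of the unitary convolution supplies precisely the computation the paper leaves implicit (it gives no separate written proof of this proposition). Your added justification of absolute convergence, domination by $2^{\omega(m)}/m^{x}$ --- or more safely by the divisor function $d(m)$ to avoid any appearance of circularity --- is sound and goes beyond what the paper records.
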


The two following indentities  independent was proven aside during this work , they do not make use of the previous unitary arithmetic framework

\begin{restatable}{prop}{zetaminusone}\label{prop:zetaminusone}

	\[
	\forall s \in \mathbb{C} , \Re(s)>1 ~ : ~
	\sum_{n=1}^{\infty} \frac{1}{n^{s}} \cdot \sum_{p \in \mathbb{P}} \frac{1}{p^{s}} 
	\frac{1}{\omega(np)}  = \zeta(s)-1
	\]

\end{restatable}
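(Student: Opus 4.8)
The plan is to expand the left-hand side into a double sum over integers $n$ and primes $p$ and then re-index by the product $m = np$. First, writing $\sigma = \Re(s) > 1$, I would note that the double series
\[ \sum_{n=1}^{\infty}\sum_{p \in \mathbb{P}} \frac{1}{n^{s}p^{s}}\frac{1}{\omega(np)} \]
converges absolutely: since $\omega(np) \geq 1$, it is dominated by $\left(\sum_{n \geq 1} n^{-\sigma}\right)\left(\sum_{p \in \mathbb{P}} p^{-\sigma}\right) \leq \zeta(\sigma)^{2} < \infty$. This permits interchanging the two summations freely and grouping the terms according to the value of the product $m = np$.

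Next I would perform the change of index. For a fixed integer $m \geq 2$, the pairs $(n,p)$ with $n \geq 1$, $p$ prime and $np = m$ are in bijection with the prime divisors $p \mid m$, each such $p$ forcing $n = m/p$; there are exactly $\omega(m)$ of them. For every such pair one has $n^{s}p^{s} = m^{s}$ and $\omega(np) = \omega(m)$, so each contributes the same term $\frac{1}{m^{s}\,\omega(m)}$. Summing over the $\omega(m)$ prime divisors, the total contribution of $m$ collapses to
\[ \omega(m)\cdot\frac{1}{m^{s}\,\omega(m)} = \frac{1}{m^{s}}. \]

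Finally I would sum over $m$. Every integer $m \geq 2$ possesses at least one prime factor, whereas $m = 1$ never occurs as a product $np$ with $p$ prime; hence $m$ ranges over all integers $\geq 2$, and the left-hand side equals $\sum_{m=2}^{\infty} m^{-s} = \zeta(s) - 1$, as claimed.

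I do not anticipate a serious obstacle: the only delicate point is justifying the rearrangement, which the absolute convergence above provides, while the arithmetic heart of the identity is the observation that the weight $\frac{1}{\omega(m)}$ is exactly what cancels the multiplicity $\omega(m)$ arising from the number of factorisations $m = np$ with $p$ prime. The subtraction of $1$ reflects precisely the absence of the term $m = 1$.
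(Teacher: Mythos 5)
Your proposal is correct and follows essentially the same route as the paper's proof: both expand the left-hand side as a double sum, re-index by $m = np$, observe that each $m \geq 2$ arises from exactly $\omega(m)$ pairs $(n,p)$ so that the weight $\frac{1}{\omega(m)}$ cancels this multiplicity, and conclude $\sum_{m \geq 2} m^{-s} = \zeta(s)-1$. The only difference is that you explicitly justify the rearrangement by absolute convergence, a point the paper passes over in silence.
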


\begin{restatable}{prop}{zetaminusonenext}\label{prop:zetaminusonenext}

	\[
	\forall s \in \mathbb{C} , \Re(s)>1 ~ : ~
	\sum_{n=1}^{\infty} \frac{1}{n^{s}} \cdot \frac{1}{\omega(n)+1} \cdot [ \sum_{p \in \mathbb{P}} \frac{1}{p^{s}} + \frac{1}{\omega(n)} \cdot \sum_{p | n} \frac{1}{p^{s}}   ]  = \zeta(s)-1
	\]

\end{restatable}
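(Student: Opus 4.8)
The plan is to regard the whole left-hand side as a sum of terms of the shape $(np)^{-s}$ carrying rational weights, to interchange the order of summation (legitimate since for $\Re(s)>1$ the double series converges absolutely: the weights are bounded by $1$ and $\sum_{n} n^{-s}\sum_{p} p^{-s}$ converges), and then to reindex by the single variable $m=np$. Since the right-hand side is $\zeta(s)-1=\sum_{m\ge 2}m^{-s}$, it will suffice to show that after reindexing the coefficient of each $m^{-s}$ collapses to $1$.

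First I would expand the bracket and split the expression into the two double sums
\[
\mathrm{(I)}=\sum_{n\ge 1}\sum_{p\in\mathbb{P}}\frac{1}{(np)^{s}}\,\frac{1}{\omega(n)+1},
\qquad
\mathrm{(II)}=\sum_{n\ge 1}\sum_{p\mid n}\frac{1}{(np)^{s}}\,\frac{1}{\omega(n)\,(\omega(n)+1)},
\]
where for $n=1$ the inner sum in $\mathrm{(II)}$ is empty and is read as $0$, so the apparent $1/\omega(1)$ never actually occurs. Now I reindex both sums by $m=np$. In $\mathrm{(I)}$ the prime $p$ runs over all divisors of $m$ and $n=m/p$; the key arithmetic fact is that $\omega(m/p)=\omega(m)-1$ when $v_{p}(m)=1$ and $\omega(m/p)=\omega(m)$ when $v_{p}(m)\ge 2$, so the weight $1/(\omega(n)+1)$ equals $1/\omega(m)$ in the first case and $1/(\omega(m)+1)$ in the second. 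In $\mathrm{(II)}$ the condition $p\mid n=m/p$ is exactly $v_{p}(m)\ge 2$, and in that case $\omega(n)=\omega(m)$.

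Next I would sort, for each fixed $m\ge 2$, its prime divisors into $a(m)$ primes with $v_{p}(m)=1$ and $b(m)$ primes with $v_{p}(m)\ge 2$, so that $a(m)+b(m)=\omega(m)$ and $\omega(m)\ge 1$. Collecting the coefficient of $m^{-s}$ from $\mathrm{(I)}$ and $\mathrm{(II)}$ then gives
\[
\frac{a(m)}{\omega(m)}+\frac{b(m)}{\omega(m)+1}+\frac{b(m)}{\omega(m)\,(\omega(m)+1)}
=\frac{a(m)}{\omega(m)}+\frac{b(m)}{\omega(m)}
=\frac{a(m)+b(m)}{\omega(m)}=1,
\]
where the middle equality uses the elementary identity $\frac{1}{\omega+1}+\frac{1}{\omega(\omega+1)}=\frac{1}{\omega}$. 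Summing over all $m\ge 2$ yields $\sum_{m\ge 2}m^{-s}=\zeta(s)-1$, as claimed.

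The step I expect to be the main obstacle is the reindexing bookkeeping: correctly tracking how $\omega(m/p)$ behaves according to the multiplicity $v_{p}(m)$, and recognising that the divisor condition $p\mid n$ in $\mathrm{(II)}$ selects precisely the primes with $v_{p}(m)\ge 2$. Everything else — the absolute convergence that justifies the rearrangement, the harmless degeneracy of the $n=1$ term, and the final algebraic simplification of the coefficient — is routine once this classification is in place.
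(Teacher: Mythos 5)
Your proof is correct, but it takes a genuinely different route from the paper's. The paper does not re-derive the value $\zeta(s)-1$ from scratch: it starts from the preceding Proposition~\ref{prop:zetaminusone}, i.e.\ the identity $\sum_{n\ge 1} n^{-s}\sum_{p\in\mathbb{P}} p^{-s}\,\frac{1}{\omega(np)}=\zeta(s)-1$, and transforms that left-hand side into the present one: it splits the inner sum over primes according to whether $p\mid n$ (where $\omega(np)=\omega(n)$) or $p\nmid n$ (where $\omega(np)=\omega(n)+1$), writes $\sum_{p\nmid n}=\sum_{p\in\mathbb{P}}-\sum_{p\mid n}$, and merges the two $\sum_{p\mid n}$ terms via $\frac{1}{\omega}-\frac{1}{\omega+1}=\frac{1}{\omega(\omega+1)}$. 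You instead prove the identity directly: you reindex the double sum by $m=np$ and compute, for each fixed $m$, the total weight contributed by its prime divisors according to their multiplicity, using the same elementary fraction identity in the opposite direction. Note that your classification $v_p(m)=1$ versus $v_p(m)\ge 2$ is exactly the image, under the substitution $m=np$, of the paper's dichotomy $p\nmid n$ versus $p\mid n$, so the two arguments are mirror images of one another. What each buys: the paper's derivation is shorter because it reuses the earlier proposition (the two statements are really the same identity rewritten), whereas yours is self-contained --- it implicitly re-proves Proposition~\ref{prop:zetaminusone} along the way --- and it is more careful about the points the paper leaves tacit, namely the absolute-convergence justification for the rearrangement and the degenerate $1/\omega(1)$ factor at $n=1$.
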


This result is about derivation operators over the ring $ (\mathbb{M},  \square  , \times )  $

\begin{restatable}[Character derivation theorem]{thm}{derivation}\label{thm:derivation}
\textnormal{Any derivation } $ \nabla $  \textnormal{ acting on the ring} $ (\mathbb{M},  \scalebox{0.6}{$\square$}  , \times ) $ \textnormal{is null over all dirichlet characters :}
	
		\[ \forall \chi ~~,~~ \nabla(\chi) = \numbermathbb{0} \]

\end{restatable}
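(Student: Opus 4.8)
The plan is to work entirely inside the ring furnished by the \emph{Ring unicity theorem} (\ref{thm:unicity}), so that $\square$ is the coprime (unitary) convolution $[F\square G](m)=\sum_{ab=m,\,(a,b)=1}F(a)G(b)$. First I would pin down the ring data needed below: the additive zero is the function $\numbermathbb{0}=\epsilon$ with $\epsilon(1)=1$ and $\epsilon(n)=0$ for $n>1$ (one checks $F\square\epsilon=F$); the multiplicative unit is the constant $\mathds{1}$; and, as in any ring, $\epsilon$ is absorbing, $\epsilon\times F=\epsilon$ for every $F\in\mathbb{M}$ (the values of $\epsilon(m)F(m)$ are $1$ at $m=1$ and $0$ otherwise). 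The second, crucial, structural observation is that on a prime power the only coprime factorisation is the trivial one, so $[F\square G](p^{j})=F(p^{j})+G(p^{j})$; hence $\square$ restricts to ordinary addition in each prime-power coordinate, and therefore $(\mathbb{M},\square)$ is torsion-free: if $n\cdot F\coloneqq F\square\cdots\square F=\numbermathbb{0}$ then $nF(p^{j})=0$, so $F(p^{j})=0$ for all prime powers and $F=\numbermathbb{0}$ by multiplicativity.

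Next I would prove the key lemma that every $\times$-idempotent has vanishing derivation. If $e\times e=e$, the Leibniz axiom and commutativity give $\nabla e=2\,(e\times\nabla e)$ (writing $2X\coloneqq X\square X$). Multiplying by $e$ and using distributivity of $\times$ over $\square$ together with $e\times e=e$ yields $e\times\nabla e=2\,(e\times\nabla e)$, whence $e\times\nabla e=\numbermathbb{0}$ by torsion-freeness, and then $\nabla e=2\,\numbermathbb{0}=\numbermathbb{0}$. Applied to the principal character $\chi_{0}$ modulo $k$ (which is idempotent since $\chi_{0}(n)\in\{0,1\}$) this gives $\nabla\chi_{0}=\numbermathbb{0}$; the same applies to $\mathds{1}$.

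I would then fix a character $\chi$ modulo $k$ and attack $\nabla\chi$ from two sides. On the one hand, a routine induction on the Leibniz rule gives the power rule $\nabla(\chi^{\times r})=r\,(\chi^{\times(r-1)}\times\nabla\chi)$. Taking $r=\varphi(k)$, Euler's theorem forces $\chi^{\times\varphi(k)}=\chi_{0}$, so the left side equals $\nabla\chi_{0}=\numbermathbb{0}$; torsion-freeness then gives $\chi^{\times(\varphi(k)-1)}\times\nabla\chi=\numbermathbb{0}$, i.e.\ $\chi(m)^{\varphi(k)-1}\,\nabla\chi(m)=0$ for all $m$. Since $\chi(m)\neq0$ exactly when $(m,k)=1$, this kills $\nabla\chi(m)$ for every $m>1$ coprime to $k$. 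On the other hand, from $\chi_{0}\times\chi=\chi$ together with $\nabla\chi_{0}=\numbermathbb{0}$ and the absorbing property, the Leibniz rule collapses to $\nabla\chi=\chi_{0}\times\nabla\chi$, i.e.\ $\nabla\chi(m)=\chi_{0}(m)\,\nabla\chi(m)$, which kills $\nabla\chi(m)$ for every $m$ with $(m,k)>1$. Combining the two, $\nabla\chi(m)=0$ for all $m>1$, while $\nabla\chi(1)=1=\epsilon(1)$ because $\nabla\chi\in\mathbb{M}$; hence $\nabla\chi=\epsilon=\numbermathbb{0}$.

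The step I expect to be the genuine obstacle is precisely the behaviour at primes dividing the modulus. The ``coprimality'' relation coming from $\chi^{\times\varphi(k)}=\chi_{0}$ only controls $\nabla\chi$ where $\chi$ does not vanish, and is blind to the places $p\mid k$. What rescues the argument is the second, independent relation $\nabla\chi=\chi_{0}\times\nabla\chi$ obtained from $\chi=\chi_{0}\times\chi$; securing both relations and checking that they cover complementary ranges of $m$ is the heart of the proof. I would also record once and for all that $\nabla$ maps $\mathbb{M}$ into $\mathbb{M}$ (so the value at $1$ is pinned to $1$) and that the torsion-freeness lemma is what legitimises every cancellation of an integer multiple above.
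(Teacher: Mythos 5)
Your proof is correct, and it takes a genuinely different route from the paper's. The paper handles the zeros of $\chi$ by building an auxiliary ring element: it defines $Z_{\chi}$ as the set of prime powers on which $\chi$ vanishes, forms the indicator $\mathds{1}_{Z_{\chi}}$, and works with $\chi \,\square\, \mathds{1}_{Z_{\chi}}$, which is non-vanishing at every prime power while $\chi \times \mathds{1}_{Z_{\chi}} = \numbermathbb{0}$. A binomial expansion whose cross terms die then shows that $(\chi \,\square\, \mathds{1}_{Z_{\chi}})^{\phi(k)}$ is idempotent; computing its derivation in two ways gives $(\chi \,\square\, \mathds{1}_{Z_{\chi}}) \times \nabla(\chi \,\square\, \mathds{1}_{Z_{\chi}}) = \numbermathbb{0}$, hence $\nabla(\chi \,\square\, \mathds{1}_{Z_{\chi}}) = \numbermathbb{0}$ by the non-vanishing, and finally additivity together with $\nabla(\mathds{1}_{Z_{\chi}}) = \numbermathbb{0}$ (idempotence again) strips off the auxiliary term to leave $\nabla\chi = \numbermathbb{0}$. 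You never construct such a gadget: you split $\mathbb{N}^{\star}$ pointwise into the integers coprime to the modulus $k$, which you kill via the power rule and $\chi^{\phi(k)} = \chi_{0}$, and the integers sharing a factor with $k$, which you kill via the relation $\chi = \chi_{0} \times \chi$ and $\nabla\chi_{0} = \numbermathbb{0}$. Both arguments rest on the same pillars --- derivations annihilate $\times$-idempotents, Euler's theorem, and cancellation of integer $\square$-multiples coming from $[F \,\square\, G](p^{j}) = F(p^{j}) + G(p^{j})$ --- and both must deal specially with the primes dividing $k$, which you rightly single out as the crux. Your route buys three things: it needs no auxiliary zero-set element; your idempotent lemma is a clean, purely ring-theoretic cancellation argument valid for an arbitrary $\times$-idempotent; and it sidesteps an imprecision in the paper's chain of equalities (the paper writes $\chi^{\phi(k)} \,\square\, \mathds{1}_{Z_{\chi}}^{\phi(k)} = \chi \,\square\, \mathds{1}_{Z_{\chi}}$, whereas the correct right-hand side is $\chi_{0} \,\square\, \mathds{1}_{Z_{\chi}}$; only the idempotence conclusion, which still holds, is actually needed). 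The paper's route buys a proof that stays entirely at the level of ring identities, with no pointwise case analysis on $m$, and its completion-by-zero-set trick applies verbatim to multiplicative functions other than characters.
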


The following corollaries are some direct application of the ring operations 

\begin{restatable}{prop}{orthproduct}\label{prop:orthproduct}

	\textnormal{for F and G completly multiplicatives with $ \delta_{1}(1) = 1  $ and  $ \forall n \in \mathbb{N}^{*} - \left\{ 1 \right\} :  \delta_{1}(n) = 0 $ :}
	\[ \forall F,G \in \mathbb{M_{c}} ~: ~ F \times G = \delta_{1} \Rightarrow D(F,s) \times D(G,s) = D( F \square G ,s) \]

\end{restatable}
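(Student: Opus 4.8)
The plan is to derive this directly from the Refactorisation theorem (Theorem~\ref{thm:refactoristion}), or rather from its reformulation $D(F,s) \times D(G,s) = D(F \times G,2s) \times D(F \square G,s)$, which holds for all $F,G \in \mathbb{M}_{c}$. Since both $F$ and $G$ are assumed completely multiplicative, this identity applies verbatim, so the only remaining task is to evaluate the factor $D(F \times G,2s)$ under the hypothesis $F \times G = \delta_{1}$.

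First I would compute $D(\delta_{1},2s)$. By the definition of the generative Dirichlet series, $D(\delta_{1},2s) = \sum_{n=1}^{\infty} \frac{\delta_{1}(n)}{n^{2s}}$; using $\delta_{1}(1)=1$ together with $\delta_{1}(n)=0$ for every $n>1$, all terms except the first vanish, leaving $D(\delta_{1},2s)=1$. Note that no convergence subtlety arises here, since $\delta_{1}$ is finitely supported and the sum reduces to its single nonzero term for every $s$.

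Substituting $D(F \times G,2s) = D(\delta_{1},2s) = 1$ into the reformulated identity then gives $D(F,s) \times D(G,s) = 1 \times D(F \square G,s) = D(F \square G,s)$, which is exactly the claimed equality, valid on the common half-plane $\Re(s) > \sigma$ where the relevant series converge.

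I do not anticipate a genuine obstacle: once the Refactorisation theorem is granted, the proposition is a one-line specialisation whose entire content is the trivial evaluation $D(\delta_{1},2s)=1$. The only point worth a passing remark is that the hypothesis $F \times G = \delta_{1}$ is not vacuous --- because $F$ and $G$ are completely multiplicative, it amounts to requiring that at every prime $p$ at least one of $F(p)$, $G(p)$ vanishes --- but this consistency check plays no role in the deduction itself.
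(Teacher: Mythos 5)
Your proposal is correct and follows essentially the same route as the paper: the paper likewise invokes the Refactorisation theorem and observes that $F \times G = \delta_{1}$ forces the factor $D(F \times G, \cdot)$ to equal $1$, so the identity collapses to $D(F,s) \times D(G,s) = D(F \,\square\, G, s)$. Your write-up is in fact slightly more careful than the paper's, since you explicitly evaluate $D(\delta_{1},2s)=1$ at the argument $2s$ and note the (trivial) convergence point, whereas the paper states the evaluation at $s$ without comment.
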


\begin{restatable}{prop}{primecompfactor}\label{prop:primecompfactor}

	\textnormal{Let $ A \in \mathbb{P} $ and $ \bar{A} \in \mathbb{P} $ be complementary in $ \mathbb{P} $  , let F be an arithmetic completly multiplicative  function :}
	\[ \forall F \in \mathbb{M_{c}} ~:~ D(\mathds{1}_{A} \times F,s) \times D(\mathds{1}_{\bar{A}} \times F,s) = D(F,s) \]
	
\end{restatable}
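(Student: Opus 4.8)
The plan is to reduce everything to the Euler product for completely multiplicative functions. First I would recall that if $H \in \mathbb{M}_{c}$, then in its region of absolute convergence the generating Dirichlet series factors over the primes,
\[ D(H,s) = \prod_{p \in \mathbb{P}} \frac{1}{1 - H(p)\, p^{-s}}, \]
which is exactly the Euler-product identity stated for $\zeta$ in the introduction, now applied to a general completely multiplicative $H$ in place of $\mathds{1}$. The mechanism is that complete multiplicativity turns each local factor $\sum_{k \ge 0} H(p^{k})\, p^{-ks}$ into a geometric series $\sum_{k \ge 0} (H(p)\, p^{-s})^{k} = (1 - H(p)\, p^{-s})^{-1}$.

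Next I would observe that $\mathds{1}_{A}$ is itself completely multiplicative: its value on a prime $p$ is $1$ when $p \in A$ and $0$ when $p \in \bar{A}$, so extending completely multiplicatively gives $\mathds{1}_{A}(n)=1$ precisely when every prime dividing $n$ lies in $A$. Since $\mathbb{M}_{c}$ is stable under the pointwise product $\times$ (the remark following the definition of $\mathbb{M}_{c}$), both $\mathds{1}_{A} \times F$ and $\mathds{1}_{\bar{A}} \times F$ belong to $\mathbb{M}_{c}$, and the Euler product above applies to each. Because $(\mathds{1}_{A} \times F)(p) = \mathds{1}_{A}(p)\, F(p)$ equals $F(p)$ for $p \in A$ and $0$ for $p \in \bar{A}$, every local factor indexed by a prime of $\bar{A}$ collapses to $1$, leaving
\[ D(\mathds{1}_{A} \times F, s) = \prod_{p \in A} \frac{1}{1 - F(p)\, p^{-s}}, \qquad D(\mathds{1}_{\bar{A}} \times F, s) = \prod_{p \in \bar{A}} \frac{1}{1 - F(p)\, p^{-s}}. \]

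Finally I would multiply these two expressions. As $A$ and $\bar{A}$ are complementary in $\mathbb{P}$, concatenating the two products sweeps over every prime exactly once, so the result is $\prod_{p \in \mathbb{P}} (1 - F(p)\, p^{-s})^{-1}$, which by the first step applied to $F$ itself is precisely $D(F,s)$. This yields the claimed identity $D(\mathds{1}_{A} \times F, s) \times D(\mathds{1}_{\bar{A}} \times F, s) = D(F,s)$.

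I expect the only genuine obstacle to be justifying the Euler-product manipulations — expanding each local factor as a geometric series and regrouping a product over all primes into a product over $A$ times a product over $\bar{A}$ — since these steps require absolute convergence. Hence the point deserving care is pinning down the common half-plane of validity: one needs $\sum_{p \in \mathbb{P}} |F(p)|\, p^{-\Re(s)} < \infty$, which holds for $\Re(s)$ large, and for $\Re(s) > 1$ whenever $F$ is bounded, as is the case for Dirichlet characters. Within that region the rearrangement is legitimate and the argument closes.
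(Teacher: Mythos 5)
Your proof is correct, but it takes a genuinely different route from the paper's. The paper stays entirely inside its ring framework: it applies the Refactorisation theorem (Theorem~\ref{thm:refactoristion}) to the pair $\mathds{1}_{A}\times F$ and $\mathds{1}_{\bar{A}}\times F$, uses distributivity of $\square$ over $\times$ to rewrite $[\mathds{1}_{A}\times F]~\square~[\mathds{1}_{\bar{A}}\times F]=[\mathds{1}_{A}~\square~\mathds{1}_{\bar{A}}]\times F$, and then concludes with the two set-theoretic identities $\mathds{1}_{A}\times\mathds{1}_{\bar{A}}=\mathds{1}_{\emptyset}=\delta_{1}$ (complementary sets are disjoint, so the first Dirichlet factor is the constant $1$; the paper writes that factor at $s$ rather than $2s$, but it equals $1$ either way) and $\mathds{1}_{A}~\square~\mathds{1}_{\bar{A}}=\mathds{1}$ (the two sets cover $\mathbb{P}$, so the second factor is $D(F,s)$). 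You instead bypass the ring machinery altogether and split the Euler product $D(H,s)=\prod_{p\in\mathbb{P}}\bigl(1-H(p)\,p^{-s}\bigr)^{-1}$ of a completely multiplicative $H$ into the subproducts over $A$ and $\bar{A}$, noting that the local factors of $\mathds{1}_{A}\times F$ at primes of $\bar{A}$ collapse to $1$. Your argument is more elementary and self-contained, and it is the only one of the two that addresses convergence explicitly (absolute convergence in a suitable half-plane), a point the paper's formal manipulation glosses over; indeed, the remark the paper places right after this proposition presents exactly your Euler-product picture, but only as an illustration for $\zeta$ rather than as the proof. What the paper's approach buys in exchange is that the proposition falls out as a purely algebraic corollary of the Refactorisation theorem and the ring identities of $(\mathbb{M},\square,\times)$, which is precisely the use of the framework the paper wants to showcase.
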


\begin{rem}
	\textnormal{This formula reflet the euler product, let's take zêta as example :}
	
	\begin{align*}
	D(\mathds{1},s)
	&= \zeta(s) \\
	&= \prod_{p \in \mathbb{P}} \frac{1}{1-\frac{1}{p^{s}}} \\
	&=  \prod_{p \in A} \frac{1}{1-\frac{1}{p^{s}}} \times \prod_{p \in \bar{A}} \frac{1}{1-\frac{1}{p^{s}}} \\
	&= D(\mathds{1}_{A} \times \mathds{1},s) \times D(\mathds{1}_{\bar{A}} \times \mathds{1} ,s)\\
	\end{align*}
	
\end{rem}

\begin{restatable}{prop}{twotime}\label{prop:twotime}

	\textnormal{Identity in $ (\mathbb{M},  \square  , \times )  $ : }
	\[[F ~ \square ~ F ] (m) = [{2}^{\omega(.)} \times F ](m) \]

\end{restatable}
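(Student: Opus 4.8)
The plan is to unfold both sides of the claimed identity using the relevant definitions, apply the Ring unicity theorem to pin down the weight $W$, and then reduce everything to a single counting statement about coprime factorisations of $m$.

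First I would invoke Theorem~\ref{thm:unicity}: since the identity is asserted in the ring $(\mathbb{M},\square,\times)$, the weight is forced to be $W(a,b)=1$ when $(a,b)=1$ and $W(a,b)=0$ otherwise. Plugging this into the definition of the $W$-convolution collapses the weighted sum to a restricted one,
\[ [F\,\square\,F](m)=\sum_{ab=m}F(a)F(b)\,W(a,b)=\sum_{\substack{ab=m\\ (a,b)=1}}F(a)F(b), \]
while the definition of $\times$ makes the right-hand side immediate, $[2^{\omega(.)}\times F](m)=2^{\omega(m)}F(m)$. So the task becomes showing that the restricted sum equals $2^{\omega(m)}F(m)$.

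The core step is to describe the index set $\{(a,b):ab=m,\ (a,b)=1\}$. Writing $m=\prod_{p\mid m}p^{v_{p}(m)}$, I would observe that a coprime ordered factorisation $m=ab$ is not allowed to split any prime power $p^{v_{p}(m)}$ between $a$ and $b$; hence it is completely determined by choosing, for each of the $\omega(m)$ distinct primes dividing $m$, whether the whole block $p^{v_{p}(m)}$ goes into $a$ or into $b$. This yields a bijection between these ordered pairs and subsets of the set of prime divisors of $m$, so there are exactly $2^{\omega(m)}$ of them (equivalently, these $a$ are precisely the unitary divisors of $m$, each paired with its cofactor). For each such pair, multiplicativity of $F\in\mathbb{M}$ together with $(a,b)=1$ gives $F(a)F(b)=F(ab)=F(m)$, so every summand equals the constant $F(m)$; summing over the $2^{\omega(m)}$ pairs delivers $2^{\omega(m)}F(m)$, which is exactly the right-hand side.

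The only genuinely delicate point is the counting/bijection step, and within it the care needed about ordering: because the factorisations are ordered, the pairs $(a,b)$ and $(b,a)$ are counted separately, which is what produces the full factor $2^{\omega(m)}$ rather than a halved count. I would therefore make the bijection with subsets of $\{p:p\mid m\}$ fully explicit to avoid any off-by-a-factor error, and note that the $m=1$ case (empty product, $\omega(1)=0$, single factorisation $1=1\cdot 1$) is consistent with $2^{0}F(1)=F(1)$.
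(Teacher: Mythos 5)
Your proof is correct, but it takes a genuinely different route from the paper's. The paper argues locally at prime powers: since the only coprime ordered splittings of $p^{n}$ are $(1,p^{n})$ and $(p^{n},1)$, one has $[F\,\square\,F](p^{n})=2F(p^{n})$, and then---using that $F\,\square\,F$ is itself multiplicative (the stability Lemma~\ref{lem:stability})---it assembles the value at a general $m$ as a product over its prime divisors,
\[
[F\,\square\,F](m)=\prod_{p\mid m}\bigl(F(p^{v_{p}(m)})+F(p^{v_{p}(m)})\bigr)=2^{\omega(m)}F(m),
\]
the last equality using multiplicativity of $F$. You instead work globally: you enumerate all coprime ordered factorisations $ab=m$ at once, via the bijection with subsets of the prime divisors of $m$, and observe that each of the $2^{\omega(m)}$ summands equals $F(m)$. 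The two arguments rest on the same combinatorial fact (a coprime factorisation cannot split a prime-power block between $a$ and $b$), but yours has the advantage of not invoking stability of $\square$ on $\mathbb{M}$---only multiplicativity of $F$ itself plus the determination of $W$ from Theorem~\ref{thm:unicity}---whereas the paper's one-line product formula silently presupposes that $F\,\square\,F$ is multiplicative. In exchange, the paper's version is shorter and follows the ``compute at prime powers'' idiom used throughout the rest of the paper. Your explicit care about ordering (so that $(a,b)$ and $(b,a)$ are counted separately, yielding the full $2^{\omega(m)}$) and about the degenerate case $m=1$ is correct and closes the only places where such a counting argument could slip.
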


\begin{restatable}{prop}{sumchar}\label{prop:sumchar}

	\[ \forall n \in \mathbb{N}-\{0,1\} ,~ \forall a \in \mathbb{N}-\{0,1\}  :  \bigg[ \stackrel{\phi(n)} { \mathlarger{ \mathlarger{ \underset{k=1}{ ~  \mathlarger{ \mathlarger{ \square } }  ~ } }}  }   \mathlarger{ \mathlarger{ \chi_{k} } }  \bigg] (a)  = \phi(n)^{\omega(a)} \cdot \delta_{n | [\text{Id}_{e} - \mathds{1}](a) } \]

\end{restatable}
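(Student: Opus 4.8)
The plan is to first reduce the $\phi(n)$-fold $\square$-convolution to a product ranging over the prime-power components of $a$, and then to evaluate each factor by the orthogonality relations for Dirichlet characters modulo $n$. By the Ring unicity theorem (Theorem~\ref{thm:unicity}), the operation $\square$ is the coprime (unitary) convolution, so that $[F \square G](m) = \sum_{ab = m,\ (a,b)=1} F(a)G(b)$. Iterating this, the $\phi(n)$-fold convolution evaluated at $a$ is a sum over all ordered factorisations $a = d_1 d_2 \cdots d_{\phi(n)}$ into pairwise coprime parts, weighted by $\prod_{k=1}^{\phi(n)} \chi_k(d_k)$.

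First I would exploit the pairwise coprimality forced by $\square$: since distinct $d_k$ share no prime, each prime-power block $p^{v_p(a)}$ dividing $a$ must be assigned, in one piece, to exactly one of the $\phi(n)$ characters. Hence the admissible factorisations are in bijection with the assignments $f \colon \{\, p \in \mathbb{P} : p \mid a \,\} \to \{1, \dots, \phi(n)\}$, and the convolution rewrites as $\sum_f \prod_{p \mid a} \chi_{f(p)}\big(p^{v_p(a)}\big)$. Because the choices at distinct primes are independent, this sum factorises, giving
\[ \Big[ \square_{k=1}^{\phi(n)} \chi_k \Big](a) = \prod_{p \mid a} \Big( \sum_{k=1}^{\phi(n)} \chi_k\big(p^{v_p(a)}\big) \Big). \]

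Next I would apply character orthogonality modulo $n$: the inner sum $\sum_{k=1}^{\phi(n)} \chi_k(m)$ equals $\phi(n)$ when $m \equiv 1 \pmod n$ and vanishes otherwise (the case $(m,n) > 1$ is automatically included, since then every $\chi_k(m) = 0$). Taking $m = p^{v_p(a)}$, each factor is $\phi(n)$ exactly when $p^{v_p(a)} \equiv 1 \pmod n$ and $0$ otherwise; multiplying over the $\omega(a)$ primes dividing $a$ then yields $\phi(n)^{\omega(a)}$ precisely when every prime-power component of $a$ is $\equiv 1 \pmod n$, and $0$ as soon as one component fails this.

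The hard part will be the final identification of this local, componentwise condition with the global condition $\delta_{n \mid [\text{Id}_e - \mathds{1}](a)}$, i.e. $n \mid (a-1)$. One implication is immediate: if $p^{v_p(a)} \equiv 1 \pmod n$ for every $p \mid a$, then multiplying gives $a = \prod_{p \mid a} p^{v_p(a)} \equiv 1 \pmod n$, so the support of the left-hand side is contained in that of the right-hand side. Pinning down the exact form of the right-hand side is the delicate point, and I would isolate it as a separate lemma controlling the residue $a \bmod n$ in terms of its prime-power factors; this reconciliation is where the argument must be carried out with the most care.
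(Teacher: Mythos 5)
Your first two steps coincide exactly with the paper's own proof: the pairwise-coprimality argument that forces each prime-power block of $a$ onto a single character, giving
$[\chi_1 \square \cdots \square \chi_{\phi(n)}](a)=\prod_{p\mid a}\big(\sum_{k=1}^{\phi(n)}\chi_k(p^{v_p(a)})\big)$,
and then orthogonality turning each factor into $\phi(n)\cdot\delta_{p^{v_p(a)}\equiv 1 \bmod n}$. Both steps are correct. But the final step you flagged as delicate and deferred to a lemma is not a gap you can close: that step is false, and so is the proposition as stated. The support of the left-hand side is $\{a : p^{v_p(a)}\equiv 1 \bmod n \text{ for every } p\mid a\}$, which is \emph{strictly} smaller than $\{a : a\equiv 1 \bmod n\}$; you proved the correct inclusion, and the reverse inclusion fails. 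Concretely, take $n=3$, $a=10$: then $10\equiv 1\pmod 3$, so the right-hand side equals $\phi(3)^{\omega(10)}=4$, but $2\equiv 5\equiv 2\pmod 3$, so by orthogonality $\sum_k\chi_k(2)=\sum_k\chi_k(5)=0$ and the left-hand side is $0$. What your argument actually establishes is the corrected identity
$[\chi_1 \square \cdots \square \chi_{\phi(n)}](a)=\phi(n)^{\omega(a)}\prod_{p\mid a}\delta_{n\mid p^{v_p(a)}-1}$,
in which the delta cannot be collapsed across distinct primes.

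For what it is worth, the paper's own proof commits precisely the error you were careful to avoid: it silently replaces $\prod_{p\mid a}\delta_{p^{v_p(a)}\equiv 1\bmod n}$ by $\delta_{\prod_{p\mid a}p^{v_p(a)}\equiv 1\bmod n}$, treating a one-way implication as an equivalence. (The statement fares no better if one reads $[\text{Id}_e-\mathds{1}](a)$ ring-theoretically, using the $\square$-inverse of $\mathds{1}$, so that it equals $\prod_{p\mid a}(p^{v_p(a)}-1)$: for $n=4$, $a=21$ the product $2\cdot 6=12$ is divisible by $4$ although neither factor is, so the same counterexample mechanism applies.) Your instinct to isolate that step as the crux was exactly right; carrying it out honestly shows the proposition must be corrected, not proved.
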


\begin{rem}

	\textnormal{For $ y \in \mathbb{R} $ : }
	
	\textnormal{Note the functions $ n \in \mathbb{N^{\star}} ~ : ~  n \rightarrow \cos(y\ln(n)) $  and $ n \in \mathbb{N^{\star}} ~ : ~  n \rightarrow \sin(y\ln(n)) $  respectively   $\textnormal{Cosa}_{y} \textnormal{ et } \textnormal{Sina}_{y} ~  $ because they play a similar role to the trigonometric functions inside the ring $ (\mathbb{M},  \square  , \times )  $ : }
	\[  \textnormal{Cosa}_{y}^{2}
 ~ \square ~  \textnormal{Sina}_{y}^{2} = \mathds{1}  \]
	
	\textnormal{This function will be used during the proof , it have a nice reformulation using the unitary ring operations :}
	\begin{align*}
		 Q_{\chi}(l) 
		 &= \Bigg[ \delta_{l=1}(l) + \sum_{\underset{mk=l, m>k }{m \wedge k = 1} } 2\Re\Big(\frac{\chi(m)}{{m}^{iy}}\overline{\frac{\chi(k)}{{k}^{iy}}}\Big) \Bigg]\\
		 &=[\textnormal{Cosa}_{y}\times\Re\chi ~ \square ~ \textnormal{Sina}_{y}\times\Im\chi] [l]
	\end{align*}
	
	Avec :
	\[ \forall n \in \mathbb{N}^{\star} ~ \forall p \in \mathbb{P} ~: ~ \\
	\Re\chi [p^{n}] =  \Re(\chi(p^{v_{p}(n)}))\]
	\[ \forall n \in \mathbb{N}^{\star} ~ \forall p \in \mathbb{P} ~: ~ \\
	\Im\chi [p^{n}] =  \Im(\chi(p^{v_{p}(n)}))\]
	This formulation could recall a scalar product between two complex values
\end{rem}

\section{Existence and unicity of the ring $ ( \mathds{M} , \square , \times ) $ }
In this chapter, our goal is to build a specific ring of functions and prove its uniqueness across a range of similar possible structures. The operation used in the final ring has been studied in numerous articles such as \cite{VAIDYANATHASWAMY} \cite{Eckford} \cite{SITARAMAIAH} .
We will begin with a few definitions, then we will progressively investigate what conditions the convolution used in the ring should meet.

\begin{dfn}
	We will start by introducing the weight functions $ W $  
	$$
	\begin{array}{l rcl}
	W  :   {\mathbb{N}^{\star}}^{2}  \longrightarrow  \mathbb{C} \\
	\end{array}
	$$
	
\end{dfn}

This function is used as a weight in the following convolution

\begin{dfn}
	The W function define the operation $\, \underset{w}{\scalebox{0.6}{$\square$}} \, $ , witch we will call the W-convolution :
	$$ \forall F,G \in \mathbb{M} , \forall m \in \mathbb{N}^{\star}  :   [F  \, \underset{w}{\scalebox{0.6}{$\square$}} \, G](m) = \sum_{ab=m} F(a)G(b)W(a,b) $$

\end{dfn} 

\begin{rem}
For notation purpuses , we will write only  $\, \scalebox{0.6}{$\square$} \, $ .
\end{rem}

we intent to construct a ring in this section, so let's define it's multiplication operation

\begin{dfn}
	We define the operation (the multiplication) $ \times $ as follow : 
	$$ \forall F,G \in \mathbb{M} , \forall m \in \mathbb{N}^{\star}  :   [F \times G](m) = F(m)G(m) $$ 
	
\end{dfn}
many intermadiary results will be established before prooving our main results , we will start by setting some tools that will show to be usefull throughout the ring construction
\begin{dfn}
	We define the multiplicative indicatrice functions  as follow :  
	$$ \forall S \subset \mathbb{P} \times \mathbb{N}^{\star} :   \mathds{1}_{S}(p^n) = \left\{
	\begin{array}{ll}
	1 & \mbox{si } (p,n) \in S\cup\{1,1\}  \\
	0 & \mbox{sinon.}
	\end{array}
	\right. $$ 
	we write also $ \mathds{1}_{s} $ for $ s \in \mathbb{N} $ , that we define as follow :  $$ \mathds{1}_{s}=\mathds{1}_{S} \iff s=\prod_{(p,n)\in S} p^{n}$$
\end{dfn}

\begin{lem}[commutativity lemma]\label{lem:commutativity}
	\textnormal{The operation $ \, \scalebox{0.6}{$\square$} \, $ is commutative if and only if the function $ W(.,.) $ is commutative : }
	$$ \forall F,G \in \mathbb{M} , \forall m \in \mathbb{N}^{\star}  :   F \, \scalebox{0.6}{$\square$} \, G(m) = G \, \scalebox{0.6}{$\square$} \, F(m)   \iff \forall a,b \in \mathbb{N}^{\star}  :   W(a,b) = W(b,a) $$
\end{lem}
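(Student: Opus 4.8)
The plan is to prove the two implications of the equivalence separately. The direction $\Leftarrow$ is routine: assuming $W(a,b)=W(b,a)$ for all $a,b$, I would reindex the defining sum of the $\scalebox{0.6}{$\square$}$-convolution by the involution $(a,b)\mapsto(b,a)$ of the set of factorisations of $m$. For $F,G\in\mathbb{M}$ this gives
\[ [G \, \scalebox{0.6}{$\square$} \, F](m) = \sum_{ab=m} G(a)F(b)W(a,b) = \sum_{ab=m} F(a)G(b)W(b,a), \]
which equals $[F \, \scalebox{0.6}{$\square$} \, G](m)$ precisely because $W$ is symmetric. So commutativity of the convolution follows.

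The substance is in the direction $\Rightarrow$: I must deduce symmetry of $W$ from commutativity of $\scalebox{0.6}{$\square$}$ on \emph{all} of $\mathbb{M}$. First I would fix arbitrary $a,b\in\mathbb{N}^{\star}$, set $m=ab$, and aim to show $W(a,b)=W(b,a)$. The idea is to feed the commutativity hypothesis a carefully chosen pair of multiplicative test functions so that each side of $[F\,\scalebox{0.6}{$\square$}\,G](m)=[G\,\scalebox{0.6}{$\square$}\,F](m)$ collapses to a single weight value. The natural candidates are the multiplicative indicator functions from the previous definition: I take $F=\mathds{1}_{a}$ and $G=\mathds{1}_{b}$. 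Their relevant property is that $\mathds{1}_{a}(d)=1$ when $v_{p}(d)\in\{0,v_{p}(a)\}$ for every prime $p$ (that is, when $d$ is a unitary divisor of $a$) and $\mathds{1}_{a}(d)=0$ otherwise.

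With this choice, in $[\mathds{1}_{a}\,\scalebox{0.6}{$\square$}\,\mathds{1}_{b}](m)=\sum_{de=ab}\mathds{1}_{a}(d)\mathds{1}_{b}(e)W(d,e)$ a term survives only if $d$ is a unitary divisor of $a$ and $e=ab/d$ is a unitary divisor of $b$. Comparing $p$-adic valuations, $v_{p}(e)=v_{p}(a)+v_{p}(b)-v_{p}(d)$; the requirements $v_{p}(d)\in\{0,v_{p}(a)\}$ together with $v_{p}(e)\in\{0,v_{p}(b)\}$ and $v_{p}(e)\le v_{p}(b)$ force $v_{p}(d)=v_{p}(a)$ for every $p$. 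Hence the only surviving pair is $(d,e)=(a,b)$, giving $[\mathds{1}_{a}\,\scalebox{0.6}{$\square$}\,\mathds{1}_{b}](m)=W(a,b)$. The mirror-image computation yields $[\mathds{1}_{b}\,\scalebox{0.6}{$\square$}\,\mathds{1}_{a}](m)=W(b,a)$, so commutativity gives $W(a,b)=W(b,a)$; as $a,b$ were arbitrary, $W$ is symmetric.

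The step I expect to be the main obstacle is exactly this isolation of a single term: because the indicator $\mathds{1}_{a}$ is nonzero at \emph{every} unitary divisor of $a$, not just at $a$ itself, it is not obvious a priori that one term remains. The resolution is the valuation bookkeeping above, which shows that the two unitary-divisor constraints (on $d$ and simultaneously on its cofactor $ab/d$) are compatible only for $d=a$. Everything else is routine, and one should also record the easy special check $a=b$, where both sides coincide automatically, to confirm no degenerate case is overlooked.
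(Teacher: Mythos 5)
Your proposal is correct and follows essentially the same route as the paper: the forward direction tests commutativity on the indicator functions $\mathds{1}_{a}$, $\mathds{1}_{b}$ evaluated at $ab$ so that the convolution collapses to a single weight value, and the reverse direction is the standard reindexing $(a,b)\mapsto(b,a)$ of the factorisation sum. Your valuation bookkeeping justifying why only the term $(d,e)=(a,b)$ survives is in fact more careful than the paper, which asserts the collapse without comment.
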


\begin{proof}
	Let's start by the direct implication $\Longrightarrow$  : 
	\begin{align*}
	\mathds{1}_{a}  \, \scalebox{0.6}{$\square$} \,  \mathds{1}_{b}(ab) &= \sum_{nq=ab}\mathds{1}_{a}(n)\mathds{1}_{b}(q)W(n,q) \\
	&= \mathds{1}_{a}(a)\mathds{1}_{b}(b)W(a,b) \\
	&= W(a,b) 
	\end{align*}
	On the other direction : 
	\begin{align*}
	\mathds{1}_{b} \, \scalebox{0.6}{$\square$} \, \mathds{1}_{a}(ab)
	&= \sum_{nq=ab}\mathds{1}_{b}(n)\mathds{1}_{a}(q)W(n,q)\\
	&= \mathds{1}_{b}(b)\mathds{1}_{a}(a)W(b,a) \\
	&= W(b,a)
	\end{align*}
	So we have the first direction. 
	\newline
	let's establish now the reciproque $ \Longleftarrow $, we suppose :

	$$ \forall a,b \in \mathbb{N}^{\star}  :   W(a,b) = W(b,a) $$

	We have , $ \forall F,G \in $ $\mathbb{M} $ $ , \forall m \in $ $ \mathbb{N}^{\star} $ $  $  : 
	\begin{align*}
	F \, \scalebox{0.6}{$\square$} \, G(m)
	&= \sum_{nq=m}F(n)G(q)W(n,q)\\
	&= \sum_{\underset{n>q}{nq=m}}F(n)G(q)W(n,q)+\sum_{\underset{n<q}{nq=m}}F(n)G(q)W(n,q)+\sum_{\underset{n=q}{nq=m}}F(n)G(q)W(n,q) \\
	&= \sum_{\underset{q>n}{qn=m}}G(q)F(n)W(q,n)+\sum_{\underset{q<n}{qn=m}}G(q)F(n)W(q,n)+\sum_{\underset{q=n}{qn=m}}G(q)F(n)W(q,n) \\
	&= \sum_{nq=m}G(n)F(q)W(n,q)\\
	&= G \, \scalebox{0.6}{$\square$} \, F(m)
	\end{align*}
	So we have the reciproque . \\
	witch end the proof.
\end{proof}

\begin{lem}[stability lemma]\label{lem:stability}
	\textnormal{$ (\mathbb{M}, \, \scalebox{0.6}{$\square$} \,) $ is stable if and only if the function W is multiplicative over two variable in the following sens : } 
	\[ \forall F,G \in \mathbb{M}  :   F \, \scalebox{0.6}{$\square$} \, G \in \mathbb{M} \iff \forall a,b,c,d \in \mathbb{N}^{\star} ,\, ( ab , cd ) = 1 :  W(a,b)W(c,d)=W(ac,bd) \]
\end{lem}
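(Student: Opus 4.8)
The plan is to use the standard characterisation that a function $H$ belongs to $\mathbb{M}$ precisely when $H(1)=1$ and $H(mn)=H(m)H(n)$ for all coprime $m,n$, and to test this criterion on $H=F\,\square\,G$. For the reverse implication I would take arbitrary $F,G\in\mathbb{M}$ and expand $[F\,\square\,G](mn)$ over a coprime split of $mn$; for the forward implication I would feed the convolution the multiplicative indicator functions $\mathds{1}_s$ introduced above, chosen so that the defining sum collapses to a single surviving term carrying exactly one value of $W$.

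For the reverse direction $\Longleftarrow$, fix $m,n$ with $(m,n)=1$ and set $H=F\,\square\,G$. Each factorisation $uv=mn$ splits uniquely as $u=u_{1}u_{2}$, $v=v_{1}v_{2}$ with $u_{1}v_{1}=m$ and $u_{2}v_{2}=n$, where $u_{1},v_{1}\mid m$ and $u_{2},v_{2}\mid n$. Since $(m,n)=1$, multiplicativity of $F$ and $G$ gives $F(u)=F(u_{1})F(u_{2})$ and $G(v)=G(v_{1})G(v_{2})$, while the hypothesis applied to $a=u_{1},\,b=v_{1},\,c=u_{2},\,d=v_{2}$ — whose products satisfy $(u_{1}v_{1},u_{2}v_{2})=(m,n)=1$ — gives $W(u,v)=W(u_{1},v_{1})W(u_{2},v_{2})$. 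The sum then factors as $H(m)H(n)$, and with the normalisation $H(1)=W(1,1)=1$ discussed below this shows $H\in\mathbb{M}$.

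For the forward direction $\Longrightarrow$, given $(ab,cd)=1$ I would take $F=\mathds{1}_{ac}$ and $G=\mathds{1}_{bd}$, so that $H=F\,\square\,G\in\mathbb{M}$ by the assumed stability, and then evaluate $H$ at $ab$, at $cd$, and at $abcd=(ab)(cd)$. Because $\mathds{1}_{ac}$ and $\mathds{1}_{bd}$ are supported only on the divisors taking full prime powers of $ac$ and $bd$, each convolution sum retains a single nonzero term, yielding $H(ab)=W(a,b)$, $H(cd)=W(c,d)$ and $H(abcd)=W(ac,bd)$. The multiplicativity relation $H(abcd)=H(ab)H(cd)$ then reads exactly $W(ac,bd)=W(a,b)W(c,d)$, which is the desired identity.

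The main obstacle lies in the collapse of these sums in the forward direction: one must verify that no spurious cross terms survive, and the one genuinely non-trivial point is the valuation check that $n_{P}q_{P}=ab$ with $n_{P},q_{P}$ ranging over the full-prime-power divisors of $a,b$ admits only the solution $n_{P}=a,\,q_{P}=b$ — this needs attention precisely when $a$ and $b$ share primes, and is handled prime by prime by noting $v_{p}(n_{P})\in\{0,v_{p}(a)\}$, $v_{p}(q_{P})\in\{0,v_{p}(b)\}$ must sum to $v_{p}(a)+v_{p}(b)$. A secondary subtlety is the normalisation $W(1,1)=1$: the stated condition only forces $W(1,1)\in\{0,1\}$ (take $a=b=c=d=1$), so in the reverse direction one separately excludes $W(1,1)=0$ by observing it would give $[F\,\square\,G](1)=0\neq1$, while the forward direction supplies $W(1,1)=1$ at once by testing $F=G=\mathds{1}$.
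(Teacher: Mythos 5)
Your proposal is correct and takes essentially the same route as the paper's proof: your forward direction tests stability on $\mathds{1}_{ac}$ and $\mathds{1}_{bd}$ evaluated at $ab$, $cd$ and $abcd$, which is exactly the paper's computation with its $\mathds{1}_{a_{1}b_{1}}$, $\mathds{1}_{a_{2}b_{2}}$, and your reverse direction uses the same coprime splitting of each factorisation $uv=mn$ together with the hypothesis on $W$. If anything, you are more careful than the paper on the two points it glosses over, namely the prime-by-prime valuation check that the collapsed sums retain exactly one term, and the normalisation $W(1,1)=1$.
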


\begin{proof}
	Let's start by the direct implication  $\Longrightarrow$  : 
	
	$ \forall a_{1},a_{2},b_{1},b_{2} \in \mathbb{N^{\star}} $ we set :   $  a_{1}a_{2}=a $ and $ b_{1}b_{2}=b $ .
	
	suppose $ (a ,  b) = 1 $.
	\begin{align*}
	\mathds{1}_{a_{1}b_{1}} \, \scalebox{0.6}{$\square$} \, \mathds{1}_{a_{2}b_{2}}(ab)
	&= \sum_{nq=ab}\mathds{1}_{a}(n)\mathds{1}_{b}(q)W(n,q)\\
	&= \mathds{1}_{a}(a_{1}a_{2})\mathds{1}_{b}(b_{1}b_{2})W(a_{1}a_{2},b_{1}b_{2}) \\
	&=W(a_{1}a_{2},b_{1}b_{2})
	\end{align*}
	as $ \mathds{1}_{a_{1}b_{1}} \, \scalebox{0.6}{$\square$} \, \mathds{1}_{a_{2}b_{2}} $ is a multiplicative function : 
	\begin{align*}
	\mathds{1}_{a_{1}b_{1}} \, \scalebox{0.6}{$\square$} \, \mathds{1}_{a_{2}b_{2}}(ab) 
	&= [\mathds{1}_{a_{1}b_{1}} \, \scalebox{0.6}{$\square$} \, \mathds{1}_{a_{2}b_{2}}](a)\times [\mathds{1}_{a_{1}b_{1}} \, \scalebox{0.6}{$\square$} \, \mathds{1}_{a_{2}b_{2}}](b) \\
	&= [\sum_{nq=a}\mathds{1}_{a_{1}b_{1}}(n)\mathds{1}_{a_{2}b_{2}}(q)W(n,q)]\times [\sum_{nq=b}\mathds{1}_{a_{1}b_{1}}(n)\mathds{1}_{a_{2}b_{2}}(q)W(n,q)] \\
	&= [\mathds{1}_{a_{1}b_{1}}(a_{1})\mathds{1}_{a_{2}b_{2}}(a_{2})W(a_{1},a_{2})]\times[\mathds{1}_{a_{1}b_{1}}(b_{1})\mathds{1}_{a_{2}b_{2}}(b_{2})W(b_{1},b_{2})] \\
	&= W(a_{1},a_{2})W(b_{1},b_{2})
	\end{align*}
	\textnormal{ So we have the first direction : }
	\[ W(a_{1}a_{2},b_{1}b_{2}) = W(a_{1},a_{2})W(b_{1},b_{2}) \]
	let's establish now the reciproc $ \Longleftarrow $  : 
	\\
	\textnormal{We suppose : } 
	\[ \forall a,b,c,d \in \mathbb{N}^{\star} ,\, ( ab , cd ) = 1  :   W(a,b)W(c,d)=W(ac,bd) \]
	\textnormal{We have : }
	\begin{align*}
	\forall F,G \in \mathbb{M} , \forall a,b \in \mathbb{N}^{\star} , ~ (a ,  b) =1 \textnormal{ : } [F \, \scalebox{0.6}{$\square$} \, G](ab)
	& = \sum_{nq=ab}F(n)G(q)W(n,q)\\
	& = \sum_{nq=a_{1}a_{2}b_{1}b_{2}}F(n)G(q)W(n,q)
	\end{align*}
	\textnormal{On the other direction : }
	\begin{align*}
	[F \, \scalebox{0.6}{$\square$} \, G](a)\times [F \, \scalebox{0.6}{$\square$} \, G](b)
	&= [\sum_{n_{1}n_{2}=a}F(n_{1})G(n_{2})W(n_{1},n_{2})]\times [\sum_{q_{1}q_{2}=b}F(q_{1})G(q_{2})W(q_{1},q_{2})]\\
	&= \sum_{n_{1}n_{2}=a}F(n_{1})G(n_{2})W(n_{1},n_{2})\sum_{q_{1}q_{2}=b}F(q_{1})G(q_{2})W(q_{1},q_{2})\\
	&= \sum_{n_{1}n_{2}=a}\sum_{q_{1}q_{2}=b}F(n_{1})G(n_{2})W(n_{1},n_{2})F(q_{1})G(q_{2})W(q_{1},q_{2})\\
	&= \sum_{n_{1}n_{2}=a}\sum_{q_{1}q_{2}=b}F(n_{1}q_{1})G(n_{2}q_{2})W(n_{1}q_{1},n_{2}q_{2})
	\end{align*}
	We have to prove :

	\[ \sum_{nq=a_{1}a_{2}b_{1}b_{2}}F(n)G(q)W(n,q) = \sum_{n_{1}n_{2}=a}\sum_{q_{1}q_{2}=b}F(n_{1}q_{1})G(n_{2}q_{2})W(n_{1}q_{1},n_{2}q_{2}) \]

	it is equivalent to prove, knowing that $ a_{1}a_{2}=a $ , $ b_{1}b_{2}=b $ and $ (a ,  b) = 1 $ that the variable change is bijectif. \\
	$
	 \textnormal{étant donné }   n_{1} , n_{2} , q_{1} , q_{2} \in \mathbb{N}^{\star}  \textnormal{ qui vérifie }  n_{1}n_{2}=a  \textnormal{ et }  q_{1}q_{2}=b  .  \\ 	$
	
	 let's set $ n= n_{1}q_{1} $ and  $ q=n_{2}q_{2} $ , we have the following equality :

	 $$ F(n_{1}q_{1}) G(n_{2}q_{2}) W(n_{1}q_{1},n_{2}q_{2}) = F(n)G(q)W(n,q) $$
	 
	where nq=ab . \\
	 invertly , knowing that $ n, q \in \mathbb{N}^{\star} $ witch verify :  nq=ab. \\
	we have the following equality : 
	\[ F(n)G(q)W(n,q)=F((n ,  a).(n ,  b))G((q ,  a).(q ,  b))W((n ,  a).(n ,  b),(q ,  a).(q ,  b)) \]
	where $ (n ,  a).(q ,  a)=a $ and $ (n ,  b).(q ,  b)=b $ . \\
	\textnormal{witch establish the equality , so we have the reciprocal direction. }
	
\end{proof}

\begin{lem}[identity lemma]\label{lem:identity}
	\textnormal{let's suppose the stability and commutativity of the structure $ (\mathbb{M},\, \scalebox{0.6}{$\square$} \,) $ , the identity element is the function $ \delta_{1} = \mathds{1}_{\varnothing} $ : }
	$$ \exists E \in \mathbb{M} \, \forall F \in \mathbb{M}   :   F \, \scalebox{0.6}{$\square$} \, E = E  \, \scalebox{0.6}{$\square$} \, F = F \iff \forall (n,p) \in \mathbb{N}\times \mathbb{P} ,\, W(1,p^{n})=1 , E = \delta_{1}$$
\end{lem}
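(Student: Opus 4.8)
The plan is to exploit the fact that the $\square$-convolution acts prime by prime: since every divisor of a prime power $p^k$ is again a power of $p$, the value $[F \square E](p^k)$ depends only on the numbers $F(p^0),\dots,F(p^k)$, on $E(p^0),\dots,E(p^k)$, and on the weights $W(p^i,p^j)$ with $i+j=k$. Because a multiplicative function is free to take arbitrary prescribed values on the powers of a fixed prime (subject only to value $1$ at $p^0=1$), the defining identity $F \square E = F$ becomes, for each prime $p$, a family of identities that are linear in the free variables $F(p^i)$. Matching coefficients in these identities will pin down both $E$ and the weights $W(1,p^n)$ at once. Note that by commutativity ($F\square E=E\square F$) it suffices to handle the single relation $F\square E=F$.

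For the reverse implication $\Longleftarrow$, I would first upgrade the hypothesis $W(1,p^n)=1$ to $W(m,1)=1$ for every $m\in\mathbb{N}^\star$: writing $m=\prod_i p_i^{a_i}$ and applying Lemma~\ref{lem:stability} repeatedly to the coprime pairs $(p_i^{a_i},1)$ gives $W(m,1)=\prod_i W(p_i^{a_i},1)$, while Lemma~\ref{lem:commutativity} turns each factor $W(p_i^{a_i},1)=W(1,p_i^{a_i})$ into $1$. With $E=\delta_1$ (the function with $\delta_1(1)=1$ and $\delta_1(m)=0$ for $m>1$), only the term $b=1$ survives in $[F\square\delta_1](m)=\sum_{ab=m}F(a)\delta_1(b)W(a,b)=F(m)W(m,1)=F(m)$, and commutativity yields $\delta_1\square F=F$ as well; finally one checks directly that $\delta_1\in\mathbb{M}$.

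For the forward implication $\Longrightarrow$, suppose some $E\in\mathbb{M}$ satisfies $F\square E=F$ for all $F\in\mathbb{M}$. Fixing a prime $p$, abbreviate $x_i=F(p^i)$, $e_j=E(p^j)$ and $w_{ij}=W(p^i,p^j)$, with $x_0=e_0=1$; the identity read at $m=p^k$ becomes $\sum_{i+j=k}x_i e_j w_{ij}=x_k$ and must hold for every admissible choice of the $x_i$. I would induct on $k$. The base case $k=0$ gives $w_{00}=W(1,1)=1$. Assuming $e_1=\dots=e_{k-1}=0$, all cross terms with $0<i<k$ vanish and the level-$k$ identity collapses to $x_k w_{k0}+e_k w_{0k}=x_k$; matching the coefficient of $x_k$ forces $w_{k0}=W(p^k,1)=1$, hence $W(1,p^k)=1$ by commutativity, and the remaining constant term forces $e_k w_{0k}=e_k=0$, that is $E(p^k)=0$. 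This closes the induction: $E(p^k)=0$ for all $k\ge1$ and $W(1,p^n)=1$ for all $n$, while multiplicativity gives $E(1)=1$, so $E=\delta_1$.

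The main obstacle is the coefficient bookkeeping in the forward direction: I must be certain that the $x_i=F(p^i)$ are genuinely independent free parameters, so that reading off coefficients is legitimate, and that the induction hypothesis $e_1=\dots=e_{k-1}=0$ really annihilates every cross term at level $k$ except $x_k w_{k0}$ and $e_k w_{0k}$. Both points rest on the prime-by-prime locality noted above together with commutativity, which is what lets me identify $W(p^k,1)$ with $W(1,p^k)$; the global statement $W(m,1)=1$ for composite $m$, needed only in the easy direction, is precisely where Lemma~\ref{lem:stability} is used.
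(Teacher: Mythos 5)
Your proposal is correct and follows essentially the same route as the paper: both reduce to prime powers, run a strong induction on the exponent, and exploit the freedom of the value $F(p^{k})$ to match coefficients in the affine identity $x_{k}w_{k0}+e_{k}w_{0k}=x_{k}$, forcing $W(p^{k},1)=1$ (hence $W(1,p^{k})=1$ by the commutativity lemma) and $E(p^{k})=0$. The only cosmetic difference is in the easy direction, where you upgrade $W(1,p^{n})=1$ to $W(m,1)=1$ for all $m$ via the stability lemma and verify $F \, \square \, \delta_{1}=F$ at every integer, while the paper uses stability to reduce the check to prime powers only.
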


\begin{proof}
	$ $\newline
	using commutativity , we have : 
	$$ \exists E \in \mathbb{M} \, \forall F \in \mathbb{M}   :   F \, \scalebox{0.6}{$\square$} \, E = E  \, \scalebox{0.6}{$\square$} \, F = F \iff \exists E \in \mathbb{M} \, \forall F \in \mathbb{M}   :   F \, \scalebox{0.6}{$\square$} \, E = F $$
	using stability , we could  restrict the proof to : 
	$$ \exists E \in \mathbb{M} \, \forall F \in \mathbb{M}   :   F \, \scalebox{0.6}{$\square$} \, E = F \iff \forall F \in \mathbb{M} , \forall (n,p) \in \mathbb{N}^{\star}\times \mathbb{P} ,\,   :   [F \, \scalebox{0.6}{$\square$} \, E](p^{n}) = F(p^{n}) $$
	we then compute the following  W-convolution  : 
	$ \forall (n,p) \in \mathbb{N}\times \mathbb{P}  :  $ 
	\begin{align*}
	[F \, \scalebox{0.6}{$\square$} \, E](p^{n})
	&= \sum_{nq=p^{n}}F(n)E(q)W(n,q)\\
	&= \sum_{l=0}^{l=n}F(p^{l})E(p^{n-l})W(p^{l},p^{n-l}) 
	\end{align*}
	let's fix $ p \in \mathbb{P} $ and proof the direct proposition by (strong) reccurence : 
	$$ \forall F \in \mathbb{M} , \forall n \in \mathbb{N} ,\,   :   [F \, \scalebox{0.6}{$\square$} \, E](p^{n}) = F(p^{n}) \Longrightarrow  \forall n \in \mathbb{N}^{\star} ,\, W(1,p^{n})=1 ~ \textnormal{et} ~ E = \delta_{1} $$
	initial case :  $ n=0 $
	\begin{align*}
	[F \, \scalebox{0.6}{$\square$} \, E](1)
	&= W(1,1) 
	\end{align*}
	\textnormal{so we get} 
	$$ W(1,1) = 1 ~ \textnormal{ et } ~ E(1) = 1 $$
	\textnormal{ reccurence hypothesis : }
	$$ \forall F \in \mathbb{M} , \exists n_{0} \in \mathbb{N} ,\,   :   [F \, \scalebox{0.6}{$\square$} \, E](p^{n_{0}}) = F(p^{n_{0}}) \Longrightarrow  \forall n \in \mathbb{N}^{\star} ,\, W(1,p^{n_{0}})=1 ~ \textnormal{et} ~ E = \delta_{1} $$
	n+1 case : 
	
	\begin{align*}
	[F \, \scalebox{0.6}{$\square$} \, E](p^{n+1})
	&= \sum_{l=0}^{l=n+1}F(p^{l})E(p^{n+1-l})W(p^{l},p^{n+1-l}) \\
	&= F(p^{n+1})E(1)W(p^{n+1},1)+F(1)E(p^{n+1})W(1,p^{n+1}) \\
	&= W(p^{n+1},1)(F(p^{n+1})+E(p^{n+1}))
	\end{align*}
	\textnormal{so  : }
	\[W(p^{n+1},1)(F(p^{n+1})+E(p^{n+1}))-F(p^{n+1})  
	= F(p^{n+1})(W(p^{n+1},1)-1)+E(p^{n+1})W(p^{n+1},1)\]
	as nether the identity element  $ E $ , nor the weight function $ W $ does not depend on  $ F $ , this expression could be seen as a polynôme of  $ F(p^{n+1}) $ .\\
	\textnormal{Let's set  $ A=W(p^{n+1},1)-1 ~ B=E(p^{n+1})W(p^{n+1},1) $ , we the have  : }
	\[ A.F(p^{n+1})+B = 0 \iff A = 0 \, \text{et} \, B=0 \]
	so :
	\[ W(p^{n+1},1) = 1 ~ \textnormal{and} ~ E(p^{n+1}) = 0 \]
	so we have the first proposition. \\
	\textnormal{inversly : }
	\begin{align*}
	[F \, \scalebox{0.6}{$\square$} \, \delta_{1}](p^{n})
	&= \sum_{l=0}^{l=n}F(p^{l})E(p^{n-l})W(p^{l},p^{n-l}) \\
	&= F(p^{n})E(1)W(p^{n+1},1) \\
	&= F(p^{n})
	\end{align*}
	witch end our proof
\end{proof}

\begin{lem}[associativity lemma]\label{lem:associativity}
	\textnormal{ Hypothesis : } 
	\textnormal{ $ (\mathbb{M}, \scalebox{0.6}{$\square$} ) $ is stable and commutative . } \\
	\textnormal{ the structure is associative $ (\mathbb{M}, \scalebox{0.6}{$\square$} ) $ is equivalent to : }
	\[  \forall F,G,H \in \mathbb{M}   :   [[F \, \scalebox{0.6}{$\square$} \, G] \, \scalebox{0.6}{$\square$} \, H] = [F \, \scalebox{0.6}{$\square$} \, [G \, \scalebox{0.6}{$\square$} \, H]] \iff \forall a,b,c \in \mathbb{N}^{\star} ,\, W(a,b)W(ab,c)=W(b,c)W(bc,a) \]
\end{lem}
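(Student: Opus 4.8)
The plan is to turn associativity into an identity between two explicit triple sums and then read off the weight condition. Expanding the definition of $\scalebox{0.6}{$\square$}$ twice, I would compute
\[ [[F \scalebox{0.6}{$\square$} G] \scalebox{0.6}{$\square$} H](m) = \sum_{abc=m} F(a)G(b)H(c)\, W(a,b)\,W(ab,c) \]
by substituting the inner factorisation $ab$ for the left argument, and
\[ [F \scalebox{0.6}{$\square$} [G \scalebox{0.6}{$\square$} H]](m) = \sum_{abc=m} F(a)G(b)H(c)\, W(b,c)\,W(a,bc) \]
by substituting the inner factorisation $bc$ for the right argument. Since the commutativity lemma gives $W(a,bc)=W(bc,a)$, associativity is equivalent to
\[ \sum_{abc=m} F(a)G(b)H(c)\big[\,W(a,b)W(ab,c)-W(b,c)W(bc,a)\,\big]=0 \]
for every $F,G,H\in\mathbb{M}$ and every $m\in\mathbb{N}^{\star}$.

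The reciprocal direction $\Longleftarrow$ is then immediate: if the pointwise identity $W(a,b)W(ab,c)=W(b,c)W(bc,a)$ holds for all $a,b,c$, each bracket above vanishes termwise and the two triple convolutions coincide.

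For the direct direction $\Longrightarrow$ I would extract the pointwise identity by testing against the indicator functions $\mathds{1}_{p^{i}}$. Fixing a prime $p$ and exponents $i,j,k$, I would evaluate the displayed identity with $F=\mathds{1}_{p^{i}}$, $G=\mathds{1}_{p^{j}}$, $H=\mathds{1}_{p^{k}}$ at $m=p^{i+j+k}$. The decisive point is that $\mathds{1}_{p^{i}}(p^{x})\neq 0$ forces $x\in\{0,i\}$, hence $x\le i$, and likewise for $j$ and $k$; since the three exponents in any factorisation of $p^{i+j+k}$ must sum to $i+j+k$, each is pinned to its maximal value and the entire triple sum collapses to one surviving term. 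This yields
\[ W(p^{i},p^{j})\,W(p^{i+j},p^{k}) = W(p^{j},p^{k})\,W(p^{j+k},p^{i}) \]
for all primes $p$ and all $i,j,k\ge 0$.

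Finally I would upgrade this prime-power identity to arbitrary $a,b,c$ using the stability lemma. Writing $a=\prod_{p} p^{v_{p}(a)}$ and similarly for $b,c$, repeated use of the multiplicativity relation $W(a,b)W(c,d)=W(ac,bd)$ on disjointly supported arguments (with base case $W(1,1)=1$ from the identity lemma) factors $W(a,b)W(ab,c)=\prod_{p} W(p^{v_{p}(a)},p^{v_{p}(b)})\,W(p^{v_{p}(a)+v_{p}(b)},p^{v_{p}(c)})$, and symmetrically for the right-hand side. Applying the prime-power identity factor by factor and reassembling the product gives $W(a,b)W(ab,c)=W(b,c)W(bc,a)$ in full generality. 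The main obstacle is precisely this direct direction: one must guarantee that the test functions isolate a single term, which the maximal-exponent argument secures, and that the passage from a single prime to arbitrary integers is legitimate, for which the multiplicativity of $W$ furnished by the stability lemma is exactly what is required.
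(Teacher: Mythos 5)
Your proof is correct, and its engine is the same as the paper's: expand both associations into triple sums, then test against multiplicative indicator functions so that the sum collapses to a single surviving term by a maximality argument. The structural difference is how much you ask of the test functions. You test only at prime powers, obtaining $W(p^{i},p^{j})W(p^{i+j},p^{k})=W(p^{j},p^{k})W(p^{j+k},p^{i})$, and you then need a globalisation step — the stability lemma's relation $W(a,b)W(c,d)=W(ac,bd)$ for $(ab,cd)=1$ — to reach arbitrary $a,b,c$. The paper skips that second step by testing directly with $\mathds{1}_{a},\mathds{1}_{b},\mathds{1}_{c}$ for arbitrary $a,b,c$ at $m=abc$: the same maximality argument still isolates a single term, because $\mathds{1}_{a}(f)\neq 0$ forces $f$ to be a unitary divisor of $a$, hence $f\le a$, and likewise $g\le b$, $e\le c$, so $efg=abc$ pins $f=a$, $g=b$, $e=c$; thus no multiplicativity of $W$ is needed in the direct direction. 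Your detour costs an extra lemma but is sound; in exchange, your converse direction is slightly cleaner than the paper's, since your termwise cancellation handles every $m$ at once, whereas the paper verifies the converse only at prime powers and implicitly relies on stability to conclude that multiplicative functions agreeing at prime powers agree everywhere. One small repair: you invoke the identity lemma for $W(1,1)=1$, but existence of an identity element is not among this lemma's hypotheses (only stability and commutativity are). The fix is immediate: stability alone gives $[F\,\square\,G](1)=F(1)G(1)W(1,1)=W(1,1)$, and $F\,\square\,G\in\mathbb{M}$ forces $[F\,\square\,G](1)=1$; alternatively, restrict your prime-by-prime factorisation of $W$ to the primes dividing $abc$, and the value $W(1,1)$ never enters.
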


\begin{proof}
	\textnormal{ the direct proposition $ \Longrightarrow $  could be prooven using the indicatrice functions : }
	\begin{align*}
	\forall a,b,c \in \mathbb{N}^{\star} \,\mathds{1}_{a} \, \scalebox{0.6}{$\square$} \, [\mathds{1}_{b}\, \scalebox{0.6}{$\square$} \,\mathds{1}_{c}](abc) 
	&= \sum_{efg=abc}\mathds{1}_{a}(f)\mathds{1}_{b}(g)\mathds{1}_{c}(e)W(g,e)W(ge,f) \\
	&= W(b,c)W(bc,a)
	\end{align*}
	and 
	\begin{align*}
	\forall a,b,c \in \mathbb{N}^{\star} \, [\mathds{1}_{a} \, \scalebox{0.6}{$\square$} \, \mathds{1}_{b}]\, \scalebox{0.6}{$\square$} \,\mathds{1}_{c}(abc)
	&= \sum_{efg=abc}\mathds{1}_{a}(f)\mathds{1}_{b}(g)\mathds{1}_{c}(e)W(f,g)W(fg,e) \\
	&= W(a,b)W(ab,c)
	\end{align*}
	so we have  : 
	\[ \forall a,b,c \in \mathbb{N}^{\star}  :   W(a,b)W(ab,c) = W(b,c)W(bc,a) \]
	the reciproc $ \Longleftarrow $  : 
	
	$ \forall F,G,H \in \mathbb{M} , \forall (n,p) \in \mathbb{N^{\star}}\times \mathbb{P}  :  $
	\begin{align*}
	[F \, \scalebox{0.6}{$\square$} \, G]\, \scalebox{0.6}{$\square$} \,H(p^{n})
	&= \sum_{ab=p^{n}}[F \, \scalebox{0.6}{$\square$} \, G](a)H(b)W(a,b) \\
	&= \sum_{ab=p^{n}}\sum_{cd=a}F(c)G(d)W(c,d)H(b)W(cd,b) \\
	&= \sum_{bcd=p^{n}}F(c)G(d)H(b)W(c,d)W(cd,b) 
	\end{align*}
	and  : 
	\begin{align*}
	F \, \scalebox{0.6}{$\square$} \, [G\, \scalebox{0.6}{$\square$} \,H](p^{n})
	&= \sum_{ba=p^{n}}F(b)[G \, \scalebox{0.6}{$\square$} \, H](a)W(b,a) \\
	&= \sum_{ba=p^{n}}F(b)\sum_{cd=a}G(c)H(d)W(c,d)W(b,a) \\
	&= \sum_{bcd=p^{n}}F(b)G(c)H(d)W(c,d)W(b,cd) \\
	&= \sum_{bcd=p^{n}}F(c)G(d)H(b)W(d,b)W(c,db) \\ 
	&= \sum_{bcd=p^{n}}F(c)G(d)H(b)W(d,b)W(db,c) 
	\end{align*}
	but we have  :  
	\[ \forall a,b,c \in \mathbb{N}^{\star}  :   W(a,b)W(ab,c) = W(b,c)W(bc,a) \]
	so : 
	$$\sum_{bcd=p^{n}}F(c)G(d)H(b)W(c,d)W(cd,b)=\sum_{bcd=p^{n}}F(c)G(d)H(b)W(d,b)W(db,c) $$
	wicht gives us the direct propositiont .
	
\end{proof} 

\begin{lem}[multiplication properties]\label{lem:multiplication}
	\textnormal{ let's note : } $\mathds{1}  : = \mathds{1}_{\mathbb{P}\times\mathbb{N}}   $.
	
	\textnormal{$ (\mathbb{M},\, \times \,) $ is stable , commutatif , associative and admit $ \mathds{1} $ as the identity element . }
\end{lem}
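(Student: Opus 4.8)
The plan is to check the four monoid axioms for $\times$ one at a time, reducing each to the corresponding property of multiplication in the field $\mathbb{C}$, since $[F\times G](m)=F(m)G(m)$ is defined pointwise. The only axiom that genuinely uses the multiplicative structure of $\mathbb{M}$ is stability; the remaining three are immediate.

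First I would establish stability, i.e.\ that $F\times G\in\mathbb{M}$ whenever $F,G\in\mathbb{M}$. Evaluating at $1$ gives $[F\times G](1)=F(1)G(1)=1$, which handles the normalisation condition. For the multiplicativity condition I take $a,b\in\mathbb{N}^{\star}$ with $(a,b)=1$ and compute
\[ [F\times G](ab)=F(ab)G(ab)=F(a)F(b)G(a)G(b)=\big(F(a)G(a)\big)\big(F(b)G(b)\big)=[F\times G](a)\,[F\times G](b), \]
where the middle equality uses that $F$ and $G$ are each multiplicative and the regrouping uses commutativity of $\mathbb{C}$. Hence $F\times G\in\mathbb{M}$.

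Commutativity and associativity then follow pointwise with no further input. For every $m$ one has $[F\times G](m)=F(m)G(m)=G(m)F(m)=[G\times F](m)$, and likewise $[[F\times G]\times H](m)=F(m)G(m)H(m)=[F\times[G\times H]](m)$, both inheriting directly the corresponding laws in $\mathbb{C}$.

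Finally, for the identity element I would first note that $\mathds{1}=\mathds{1}_{\mathbb{P}\times\mathbb{N}}$ is the constant function equal to $1$, so $\mathds{1}(1)=1$ and $\mathds{1}(ab)=1=\mathds{1}(a)\mathds{1}(b)$ yield $\mathds{1}\in\mathbb{M}$. Then $[F\times\mathds{1}](m)=F(m)\cdot 1=F(m)$ for all $m$, and by the commutativity already established $\mathds{1}\times F=F$ as well, so $\mathds{1}$ is a two-sided identity. The point worth stressing is that there is no real obstacle in this lemma: every assertion collapses to an identity in $\mathbb{C}$, and the sole place where the definition of $\mathbb{M}$ is actually invoked is in verifying that the pointwise product of two multiplicative functions is again multiplicative.
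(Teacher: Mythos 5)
Your proof is correct and follows essentially the same route as the paper: every axiom is verified pointwise and reduced to the corresponding law of multiplication in $\mathbb{C}$, with stability being the only step that invokes multiplicativity. If anything, your version is slightly more careful than the paper's, since you state the coprimality hypothesis $(a,b)=1$ explicitly in the stability check, verify the normalisation $[F\times G](1)=1$, and confirm $\mathds{1}\in\mathbb{M}$ before using it as the identity, whereas the paper checks the identity property only at prime powers.
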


\begin{proof}
	Stability : 
	$$ \forall F,G \in \mathbb{M}, \forall a,b \in \mathbb{N^{\star}}^{2}  :  [F \times G](ab)=F(ab)\times G(ab)=F(a)G(a)F(b)G(b)=[F\times G](a)\times[F\times G](b) $$
	so  : 
	$$ \forall F,G \in \mathbb{M}  :  F\times G \in \mathbb{M} $$
	Commutativity :  
	$$ \forall F,G \in \mathbb{M}, \forall m \in \mathbb{N^{\star}} :  [F \times G](m)=F(m)\times G(m)=G(m)\times F(m)=[G \times F](m) $$
	so : 
	$$ \forall F,G \in \mathbb{M}  :  F \times G = G \times F  $$
	Associativity : 
\begin{align*} \forall F,G,H \in \mathbb{M}, \forall m \in \mathbb{N^{\star}}  :  [[F \times G] \times H ](m) & =[F\times G](m) \times H(m) \\ 
& =F(m)G(m)H(m) \\ 
& =[F\times [G\times H ]](m) 
\end{align*}
	so : 
	$$ \forall F,G,H \in \mathbb{M}  :  [[F \times G] \times H = [F\times [G\times H ]]  $$
	$ \mathds{1} $ is the identity element : 
	$$ \forall F \in \mathbb{M} , \forall (n,p) \in \mathbb{N}\times \mathbb{P}  :  [F\times \mathds{1}](p^{n}) = F(p^{n})\times\mathds{1}(p^{n}) = F(p^{n}) $$
	so ,  by unicity of the identity element , $ \mathds{1} $ is the identity element of  $ (\mathbb{M},\, \times \,) $ . \\
	witch proves the theorem.
	
\end{proof}

\begin{lem}
	\textnormal{Hypothesis : }
	\[ \forall a,b,c,d \in \mathbb{N}^{\star} ,\, ( ab , cd ) = 1  :   W(a,b)W(c,d)=W(ac,bd) \]
	\textnormal{witch could be generelised as : }
	\begin{align*}
	\forall n \in \mathbb{N} \, , \, \forall \, {a}_{0},{a}_{1}, ... , {a}_{n} \in \mathbb{N}^{\star} , \, &\forall \, {b}_{0},{b}_{1}, ... , {b}_{n} \in \mathbb{N}^{\star} \, | \, \forall i,j \in \llbracket 0,n\rrbracket \, , \,( {a}_{i}{b}_{i}  ,  {a}_{j}{b}_{j} ) = 1 \\
	& \prod_{i=0}^{i=n}W({a}_{i},{b}_{i}) = W(\prod_{i=0}^{i=n}{a}_{i},\prod_{i=0}^{i=n}{b}_{i})
	\end{align*}
\end{lem}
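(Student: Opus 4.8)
The plan is to proceed by induction on $n$. The statement for $n=0$ is the trivial identity $W(a_0,b_0)=W(a_0,b_0)$, and the statement for $n=1$ is precisely the hypothesis $W(a,b)W(c,d)=W(ac,bd)$, valid whenever $(ab,cd)=1$. So the base case requires nothing beyond recalling the assumed two-variable multiplicativity.

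For the inductive step I would assume the identity holds for any admissible family of $n$ indices (those labelled $0$ through $n-1$) and prove it for $n+1$ indices $a_0,\dots,a_n$ and $b_0,\dots,b_n$ satisfying the pairwise coprimality $(a_ib_i,a_jb_j)=1$ for all $i\neq j$. First I would set $A=\prod_{i=0}^{n-1}a_i$ and $B=\prod_{i=0}^{n-1}b_i$. The truncated family indexed by $i\leq n-1$ still satisfies the pairwise coprimality hypothesis, so the induction hypothesis yields $\prod_{i=0}^{n-1}W(a_i,b_i)=W(A,B)$; multiplying both sides by $W(a_n,b_n)$ gives $\prod_{i=0}^{n}W(a_i,b_i)=W(A,B)\,W(a_n,b_n)$.

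To collapse the right-hand side into a single value of $W$ via the hypothesis, I must verify the coprimality condition $(AB,a_nb_n)=1$. This is the only nontrivial point. Since $AB=\prod_{i=0}^{n-1}a_ib_i$ and each factor $a_ib_i$ with $i\leq n-1$ is coprime to $a_nb_n$ by the pairwise coprimality assumption, the product $AB$ is itself coprime to $a_nb_n$, using the elementary fact that an integer coprime to each of several integers is coprime to their product. With this condition in hand the hypothesis applies and gives $W(A,B)\,W(a_n,b_n)=W(Aa_n,Bb_n)=W\left(\prod_{i=0}^{n}a_i,\prod_{i=0}^{n}b_i\right)$, which is exactly the claim for $n+1$ indices and closes the induction.

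The main obstacle is entirely contained in the coprimality bookkeeping: one must check at each step both that the truncated family remains admissible for the induction hypothesis and that $(AB,a_nb_n)=1$, so that the two-variable identity can legitimately be invoked. Both facts follow from pairwise coprimality of the full family together with the multiplicativity of the gcd, so no genuinely new difficulty arises beyond the assumed $n=1$ case.
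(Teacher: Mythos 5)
Your proposal is correct and follows essentially the same route as the paper: induction on $n$, collapsing the first $n$ factors via the induction hypothesis and then applying the two-variable identity $W(A,B)W(a_n,b_n)=W(Aa_n,Bb_n)$. The only difference is that you explicitly justify the coprimality condition $(AB,a_nb_n)=1$ needed to invoke the hypothesis, which the paper merely asserts in a side annotation, so your write-up is if anything slightly more complete.
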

\begin{proof}
	we will procced by reccurence : \\
	\textnormal{Cas $n=0$ : }
	\[ \prod_{i=0}^{i=0}W({a}_{i},{b}_{i}) = W(\prod_{i=0}^{i=0}{a}_{i},\prod_{i=0}^{i=0}{b}_{i}) 
	 \iff 1 = W(1,1) \]
	the hypothesis of reccurence : 
	\begin{align*}
	\exists {n}_{0} \in \mathbb{N} \, , \, \forall \, {a}_{0},{a}_{1}, ... , {a}_{{n}_{0}} \in \mathbb{N}^{\star} , \, &\forall \, {b}_{0},{b}_{1}, ... , {b}_{{n}_{0}} \in \mathbb{N}^{\star} \, | \, \forall i,j \in \llbracket 0,{n}_{0}\rrbracket \, , \,( {a}_{i}{b}_{i}  ,  {a}_{j}{b}_{j} ) = 1 \\
	& \prod_{i=0}^{i={n}_{0}}W({a}_{i},{b}_{i}) = W(\prod_{i=0}^{i={n}_{0}}{a}_{i},\prod_{i=0}^{i={n}_{0}}{b}_{i})
	\end{align*}
	The case $n+1$  : 
	\begin{align*}
	\prod_{i=0}^{i=n+1}W({a}_{i},{b}_{i}) &= [\prod_{i=0}^{i=n}W({a}_{i},{b}_{i})]W({a}_{n+1},{b}_{n+1}) \\
	&=W(\prod_{i=0}^{i=n}{a}_{i},\prod_{i=0}^{i=n}{b}_{i})W({a}_{n+1},{b}_{n+1}) \tag*{[reccurence hypothesis]}\\
	&=W(\prod_{i=0}^{i=n+1}{a}_{i},\prod_{i=0}^{i=n+1}{b}_{i})\tag*{[$({a}_{n+1}{b}_{n+1} , \prod_{i=0}^{i=n}{a}_{i}\prod_{i=0}^{i=n}{b}_{i})=1$]}
	\end{align*}
	\textnormal{So we have the results we wanted. }
\end{proof}

\begin{lem}
	\textnormal{Hypothesis  :  $ (\mathbb{M}, \, \scalebox{0.6}{$\square$} \,) $ is stable , commutative ,  and have an identity element. }\\
	\textnormal{The function W could be written as   : } 
	\begin{align*}
	\forall n,q \in \mathbb{N}^{\star}  :  W(n,q) &=\prod_{\stackrel{p | \frac{nq}{(n, q)^{2}}}{p \in \mathbb{P}}} W({p}^{v_{p}(\frac{nq}{(n, q)^{2}})},1)\prod_{\stackrel{p | ( n , q )}{p \in \mathbb{P}}}W({p}^{v_{p}(( n , q ))},{p}^{v_{p}(( n , q ))})\\
	&=\prod_{\stackrel{p | ( n , q )}{p \in \mathbb{P}}}W({p}^{v_{p}(( n , q ))},{p}^{v_{p}(( n , q ))})
	\end{align*}
\end{lem}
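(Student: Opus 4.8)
The plan is to reduce $W(n,q)$ to its values on powers of a single prime and then to analyse one prime at a time. First I would write $n=\prod_{p\in\mathbb{P}}p^{v_p(n)}$ and $q=\prod_{p\in\mathbb{P}}p^{v_p(q)}$ and check the hypothesis of the preceding generalised multiplicativity lemma: setting $a_p=p^{v_p(n)}$ and $b_p=p^{v_p(q)}$, the products $a_pb_p=p^{v_p(n)+v_p(q)}$ are pairwise coprime over distinct primes, so that lemma yields
\[ W(n,q)=\prod_{p\in\mathbb{P}}W\bigl(p^{v_p(n)},p^{v_p(q)}\bigr). \]
This turns the whole statement into a question about a single factor $W(p^{\alpha},p^{\beta})$ with $\alpha=v_p(n)$ and $\beta=v_p(q)$, where only finitely many primes contribute a factor different from $W(1,1)=1$.

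Next I would sort the primes by the pair $(\alpha,\beta)$. If $p$ divides neither $n$ nor $q$ the factor is $W(1,1)=1$. If $p$ divides exactly one of them, say $\alpha>0=\beta$, the identity lemma together with commutativity gives $W(p^{\alpha},1)=1$; these are precisely the primes dividing $nq/(n,q)^2$ but not $(n,q)$. This already establishes the second displayed equality, because every factor of the first product has the form $W\bigl(p^{\,k},1\bigr)$ with $k=v_p\!\bigl(nq/(n,q)^2\bigr)$, and each such factor equals $1$ by the identity lemma, so the first product is identically $1$.

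The substance of the lemma is the remaining case $\alpha,\beta\ge 1$, i.e. the primes dividing $(n,q)$, where I must prove the per-prime identity
\[ W(p^{\alpha},p^{\beta})=W\bigl(p^{\,|\alpha-\beta|},1\bigr)\,W\bigl(p^{\min(\alpha,\beta)},p^{\min(\alpha,\beta)}\bigr)=W\bigl(p^{\min(\alpha,\beta)},p^{\min(\alpha,\beta)}\bigr), \]
the last step again using $W(p^{\,|\alpha-\beta|},1)=1$. This is the main obstacle, and it is genuinely the heart of the proof: stability and the identity constrain $W$ only on coprime pairs or when one argument equals $1$, and two true powers of the same prime are never coprime, so these hypotheses alone cannot force $W(p^{\alpha},p^{\beta})$ to depend only on $\min(\alpha,\beta)$. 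To obtain the collapse I would bring in the associativity relation $W(a,b)W(ab,c)=W(b,c)W(bc,a)$ of the associativity lemma, apply it to triples of powers of $p$, and induct on $\max(\alpha,\beta)$ so as to peel the excess exponent $|\alpha-\beta|$ off against factors $W(p^{\,k},1)=1$. The delicate point is that the inductive cancellation divides through by a value such as $W(p,p)$, so the argument must be arranged to handle the degenerate case $W(p,p)=0$ separately (showing it forces the surviving factors to vanish too); managing this degeneracy cleanly is exactly where the real care is required.
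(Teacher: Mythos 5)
Your first two steps are correct, and they are actually on firmer ground than the paper's own argument: the reduction $W(n,q)=\prod_{p\in\mathbb{P}}W(p^{v_{p}(n)},p^{v_{p}(q)})$ via the generalised multiplicativity lemma is valid, and the identity lemma then makes every factor with a vanishing exponent equal to $1$, which settles the second displayed equality. The gap is exactly where you located it, but it is worse than a delicate induction: the per-prime collapse $W(p^{\alpha},p^{\beta})=W(p^{\min(\alpha,\beta)},p^{\min(\alpha,\beta)})$ is simply not a consequence of the hypotheses, even after adjoining the associativity relation you propose to import. Take any $f:\mathbb{N}\to\mathbb{C}^{\star}$ with $f(0)=1$, and define $W$ on pairs of powers of a common prime by $W(p^{i},p^{j})=f(i+j)/\bigl(f(i)f(j)\bigr)$, extended to all pairs by two-variable multiplicativity. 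Stability, commutativity and the identity condition $W(1,p^{n})=1$ all hold, and so does the cocycle identity, because $W(a,b)W(ab,c)=\prod_{p}f(v_{p}(abc))/\bigl(f(v_{p}(a))f(v_{p}(b))f(v_{p}(c))\bigr)$ is symmetric in $(a,b,c)$. Choosing $f(0)=f(1)=f(2)=1$ and $f(3)=2$ gives $W(p^{2},p)=2$ while $W(p,p)=1$, so no induction on $\max(\alpha,\beta)$ can peel off the excess exponent: the identity you would be inducting towards is false, and handling the degenerate case $W(p,p)=0$ would not rescue it.

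So the defect lies in the lemma itself, not in your inability to finish the argument. The paper's proof hides the same hole in its first line: it factors $W(n,q)=W\bigl(n/(n,q),\,q/(n,q)\bigr)\,W\bigl((n,q),(n,q)\bigr)$ on the grounds that $\bigl(nq/(n,q)^{2},\,(n,q)^{2}\bigr)=1$, but this coprimality fails whenever some prime divides both $n$ and $q$ to different positive multiplicities (already for $n=p^{2}$, $q=p$ one has $nq/(n,q)^{2}=p$ and $(n,q)^{2}=p^{2}$), and for exactly such pairs the factorisation is the unjustified collapse. What survives, and what the later distributivity and unicity arguments actually need, is precisely the statement you did prove: $W(n,q)=\prod_{p\in\mathbb{P}}W(p^{v_{p}(n)},p^{v_{p}(q)})$, with every factor having a zero exponent equal to $1$; the further reduction to pairs of equal prime powers should be dropped, or asserted only after distributivity has pinned down the values of $W$ on prime-power pairs.
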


\begin{rem}
	This means that in this case, the function W is entirely determined by the image of equal prime power pairs.
\end{rem}

\begin{proof}
	\textnormal{the W function have , by hypothesis , the following property : }
	\[ \forall a,b,c,d \in \mathbb{N}^{\star} ,\, ( ab , cd ) = 1  :   W(a,b)W(c,d)=W(ac,bd) \]
	We have $ (\frac{nq}{( n , q )^{2}} , (( n , q ))^{2}) = 1 $ so  : 
	\begin{align*}
	\forall n,q \in \mathbb{N}^{\star}  :  W(n,q)
	&=W(\frac{n}{( n , q )},\frac{q}{( n , q )})W(( n , q ),( n , q ))
	\end{align*}
	let's compute the first part : 
	\begin{align*}
	\forall n,q \in \mathbb{N}^{\star}  :  W(\frac{n}{( n , q )},\frac{q}{( n , q )}) 
	&=W(\frac{n}{( n , q )},1)W(1,\frac{q}{( n , q )})\\
	&=W(\frac{nq}{{(( n , q ))}^{2}},1)\tag*{$ (\frac{n}{( n , q )}\times 1 , \frac{q}{( n , q )}\times 1)=1 $}\\
	&=\prod_{\underset{p \in \mathbb{P}}{p | \frac{nq}{{(( n , q ))}^{2}}}}W({p}^{v_{p}(\frac{nq}{{(( n , q ))}^{2}})},1)
	\end{align*}
	let's compute the second part : 
	\begin{align*}
	\forall n,q \in \mathbb{N}^{\star}  :  W(( n , q ),( n , q ))
	&= \prod_{\underset{p \in \mathbb{P}}{p | ( n , q )}}W({p}^{v_{p}(( n , q ))},{p}^{v_{p}(( n , q ))})
	\end{align*}
	so  : 
	\begin{align*}
	\forall n,q \in \mathbb{N}^{\star}  :  W(n,q)
	&=\prod_{\underset{p \in \mathbb{P}}{p | \frac{nq}{{( n , q )}^{2}}}}W({p}^{v_{p}(\frac{nq}{{( n , q )}^{2}})},1)\prod_{\underset{p \in \mathbb{P}}{p | ( n , q )}}W({p}^{v_{p}(( n , q ))},{p}^{v_{p}(( n , q ))})
	\end{align*}
	by hypothesis ( the existence of the identity element ) , we have that  : 
	\begin{align*}
	\forall n,q \in \mathbb{N}^{\star}  :  W(n,q)
	&=\prod_{\underset{p \in \mathbb{P}}{p | ( n , q )}}W({p}^{v_{p}(( n , q ))},{p}^{v_{p}(( n , q ))})
	\end{align*}
\end{proof}

\begin{lem}[distributivity lemma]\label{lem:distributivity}
	\textnormal{ Hypothesis : Commutativity , stability , associativity of the two opérations in the structure $(\mathbb{M},  \scalebox{0.6}{$\square$}  , \times ) $ . } \\
	\textnormal{In the structure $(\mathbb{M},  \scalebox{0.6}{$\square$}  , \times ) $ , The operation  \scalebox{0.6}{$\square$}  is distributive by  $ \times $  if and only if the W function , the weight of the convolution , follow some conditions : }
	\[ \forall F,G,H \in \mathbb{M}  :  [F \, \scalebox{0.6}{$\square$} \, G] \times H = [F \times H ] \, \scalebox{0.6}{$\square$} \, [G \times H] \iff  \forall a,b \in \mathbb{N}^{\star}  :  W(a,b) =\begin{cases}
	\begin{array}{ll}
	1 \,\textrm{ \textnormal{if} } \, (a ,  b) = 1 \\
	0 \,\textrm{ \textnormal{if not}}
	\end{array}
	\end{cases}  \]
	
\end{lem}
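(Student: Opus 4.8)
The plan is to treat the two implications separately, reducing everything to the single identity obtained by expanding both sides at an arbitrary $m$. Writing $[[F \, \scalebox{0.6}{$\square$} \, G]\times H](m) = H(m)\sum_{ab=m}F(a)G(b)W(a,b)$ and $[[F\times H]\, \scalebox{0.6}{$\square$} \,[G\times H]](m) = \sum_{ab=m}F(a)G(b)H(a)H(b)W(a,b)$, and using $H(m)=H(ab)$, the distributivity law is equivalent to
\[ \forall F,G,H\in\mathbb{M},\ \forall m\in\mathbb{N}^{\star}:\quad \sum_{ab=m}F(a)G(b)W(a,b)\big[H(ab)-H(a)H(b)\big]=0. \]
This single identity is what I would analyse. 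For the reverse implication I would simply substitute the claimed $W$: the sum then runs only over factorisations $ab=m$ with $(a,b)=1$, and for each such coprime pair multiplicativity of $H$ gives $H(ab)=H(a)H(b)$, so every bracket vanishes and the identity holds. This direction needs none of the standing hypotheses.

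For the forward implication I would exploit that, under the running assumptions (commutativity, stability, identity), the previous lemma expresses $W(n,q)=\prod_{p\mid (n,q)}W(p^{v_p((n,q))},p^{v_p((n,q))})$. In particular $W(a,b)=1$ whenever $(a,b)=1$ already holds (empty product), and $W(p^i,p^{n-i})=W(p^{d},p^{d})$ with $d=\min(i,n-i)$; so the whole claim reduces to showing $W(p^d,p^d)=0$ for every prime $p$ and every $d\ge 1$. I would prove this by strong induction on $d$. Specialising the displayed identity to $m=p^{2d}$ turns it into $\sum_{i=0}^{2d}F(p^i)G(p^{2d-i})W(p^i,p^{2d-i})\big[H(p^{2d})-H(p^i)H(p^{2d-i})\big]=0$.

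Here the terms $i=0$ and $i=2d$ are coprime factorisations, so their brackets vanish; for $1\le i\le 2d-1$ with $i\ne d$ one has $\min(i,2d-i)<d$, hence $W(p^i,p^{2d-i})=W(p^{\min(i,2d-i)},p^{\min(i,2d-i)})=0$ by the induction hypothesis. Only the middle term $i=d$ survives, giving $F(p^d)G(p^d)W(p^d,p^d)\big[H(p^{2d})-H(p^d)^2\big]=0$. Since a multiplicative function may take arbitrary values on prime powers, I would then choose explicit test functions: pick $F,G$ with $F(p^d)=G(p^d)=1$ and $H$ with $H(p^d)=0$ and $H(p^{2d})=1$, so that the bracket equals $1$ and we conclude $W(p^d,p^d)=0$. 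The base case $d=1$ (taking $m=p^2$) is the same computation with no induction hypothesis required.

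The main obstacle is the bookkeeping in the inductive step: verifying that every off-diagonal term really collapses. This rests on two ingredients working together, namely the structural reduction $W(p^i,p^{n-i})=W(p^{\min(i,n-i)},p^{\min(i,n-i)})$ coming from the earlier lemma, and the freedom to prescribe the values of $F$, $G$ and $H$ independently on each prime power, which is precisely what lets me isolate the single diagonal coefficient $W(p^d,p^d)$ and force it to be zero. Everything else is a routine computation once the identity above is set up.
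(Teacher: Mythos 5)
Your proof is correct, but its forward direction takes a genuinely different route from the paper's. The paper also works with test functions, but it chooses indicator functions: evaluating $[\mathds{1}_{p^{l}} \,\square\, \mathds{1}_{p^{f}}] \times \mathds{1}_{p^{n}}$ and $[\mathds{1}_{p^{l}} \times \mathds{1}_{p^{n}}] \,\square\, [\mathds{1}_{p^{f}} \times \mathds{1}_{p^{n}}]$ at $p^{n}$ (with $l+f=n$) collapses both sides completely, so that $W(p^{l},p^{f})$ is read off in one computation for \emph{every} pair $(l,f)$, diagonal or not, with no induction; the structural lemma on $W$ is then invoked only at the end, to pass from prime-power pairs to arbitrary arguments. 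You instead invoke that structural lemma first, reducing the whole claim to the diagonal values $W(p^{d},p^{d})$, and then run a strong induction on $d$ with generically prescribed $F,G,H$: your induction hypothesis is doing exactly the work that the paper's indicators do for free, namely annihilating the off-diagonal terms $W(p^{i},p^{2d-i})$, $i\neq d$. Both versions are sound; the paper's is shorter, while yours makes explicit the equivalence of distributivity with the single identity $\sum_{ab=m}F(a)G(b)W(a,b)\bigl[H(ab)-H(a)H(b)\bigr]=0$, which is a nice organizing device. Your backward direction is in fact cleaner than the paper's: you verify that identity at every integer $m$ using only multiplicativity of $H$ on coprime pairs, whereas the paper checks equality at prime powers only and implicitly relies on stability of $\square$ to conclude equality of the two multiplicative functions everywhere. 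One shared caveat: both you and the paper use the existence of the identity element (via $W(1,p^{n})=1$ and the structural lemma), which is not literally among the stated hypotheses of this lemma; this is harmless in context, since the lemma is only applied where the identity exists, but it is worth acknowledging as an assumption.
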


\begin{proof}
	Let's start by the firts direction $ \Longrightarrow $ : 
	\begin{align*}
	\forall \, l,f  \in \mathbb{N} \, l+f=n , \forall (n,p) \in \mathbb{N^{\star}}\times \mathbb{P}  :  \\ [\mathds{1}_{{p}^{l}} \, \scalebox{0.6}{$\square$} \, \mathds{1}_{{p}^{f}}] \times \mathds{1}_{{p}^{n}} ({p}^{n}) 
	&= [\mathds{1}_{{p}^{l}} \, \scalebox{0.6}{$\square$} \, \mathds{1}_{{p}^{f}}] ({p}^{n}) \times \mathds{1}_{{p}^{n}} ({p}^{n}) \\ 
	&=\sum_{ab=p^{n}}\mathds{1}_{{p}^{l}}(a)\mathds{1}_{{p}^{f}}(b)W(a,b) \\
	&=W({p}^{l},{p}^{f})
	\end{align*}
	\begin{align*}
	\forall \, l,f  \in \mathbb{N} \, l+f=n , \forall (n,p) \in \mathbb{N^{\star}}\times \mathbb{P}  :  \\ [\mathds{1}_{{p}^{l}} \times \mathds{1}_{{p}^{n}} ] \, \scalebox{0.6}{$\square$} \, [\mathds{1}_{{p}^{f}} \times \mathds{1}_{{p}^{n}} ] ({p}^{n}) 
	&=\sum_{ab=p^{n}}[\mathds{1}_{{p}^{l}} \times \mathds{1}_{{p}^{n}} ](a) [\mathds{1}_{{p}^{f}} \times \mathds{1}_{{p}^{n}} ] (b) W(a,b) \\ 
	&=\sum_{ab=p^{n}} \mathds{1}_{{p}^{l}}(a) \mathds{1}_{{p}^{n}}(a) \mathds{1}_{{p}^{f}}(b) \mathds{1}_{{p}^{n}}(b) W(a,b) \\
	&=\mathds{1}_{{p}^{l}}({p}^{n}) \mathds{1}_{{p}^{n}}({p}^{n}) \mathds{1}_{{p}^{f}}(1) \mathds{1}_{{p}^{n}}(1) W({p}^{n},1) \\
	&+ \mathds{1}_{{p}^{l}}(1) \mathds{1}_{{p}^{n}}(1) \mathds{1}_{{p}^{f}}({p}^{n}) \mathds{1}_{{p}^{n}}({p}^{n}) W(1,{p}^{n})\\
	&=[\mathds{1}_{{p}^{l}}({p}^{n}) +  \mathds{1}_{{p}^{f}}({p}^{n})] W({p}^{n},1)
	\end{align*} 
	case $l.f=0$ :  \\
	we then have $  l=n ~ \textrm{or} ~ f=n$ : 
	\[
	[\mathds{1}_{{p}^{l}}({p}^{n}) +  \mathds{1}_{{p}^{f}}({p}^{n})] W({p}^{n},1)
	=[\mathds{1}_{{p}^{n}}({p}^{n}) +  \mathds{1}_{{p}^{0}}({p}^{n})] W({p}^{n},1) 
	=1
	\] 
	Case $l.f \neq 0$ : \\
	we then have $ \, l \neq 0 \, \textrm{,} \, f \neq 0 \, \textrm{,} \,  l \neq n \, \textrm{,} \, f \neq n \, $ and :
	\begin{align*}
	[\mathds{1}_{{p}^{l}}({p}^{n}) +  \mathds{1}_{{p}^{f}}({p}^{n})] W({p}^{n},1)
	&=[0+0] W({p}^{n},1) \\
	&=0
	\end{align*}
	So to sum up :
	\[
	W({p}^{l},{p}^{f}) =\begin{cases}
	\begin{array}{ll}
	1 \,\textrm{ if } \, l.f=0 \\
	0 \,\textrm{ if not}
	\end{array}
	\end{cases}
	\]
	In other words (equivalently) :
	\[
	W({p}^{l},{p}^{f}) =\begin{cases}
	\begin{array}{ll}
	1 \,\textrm{ if } \, ({p}^{l} , {p}^{f}) = 1 \\
	0 \,\textrm{ if not}
	\end{array}
	\end{cases}
	\]
	Following the precedent result, this determine entirely the function , but the function  : 
	\[
	\forall a,b \in \mathbb{N}^{\star}  :   W(a,b) =
	\begin{cases}
	\begin{array}{ll}
	1 \,\textrm{ if } \, (a ,  b) = 1 \\
	0 \,\textrm{ if not}
	\end{array}
	\end{cases}
	\]
	verify those conditions , so the direct sens.\\
	let's now check the inderect sens $ \Longleftarrow $  : 
	\begin{align*}
	\forall F,G,H \in \mathbb{M} \, \forall (n,p) \in \mathbb{N^{\star}}\times \mathbb{P}  : \\ [F \, \scalebox{0.6}{$\square$} \, G] \times H ({p}^{n}) &= [\sum_{\underset{(a ,  b) = 1}{ab={p}^{n}}} F(a)G(b)W(a,b)]H({p}^{n}) \\
	&=[F(1)G({p}^{n})W(1,{p}^{n})+F({p}^{n})G(1)W({p}^{n},1)]H({p}^{n})\\
	&=G({p}^{n})H({p}^{n})+F({p}^{n})H({p}^{n})
	\end{align*} 
	\begin{align*}
	\forall F,G,H \in \mathbb{M} \, \forall (n,p) \in \mathbb{N^{\star}}\times \mathbb{P}  : \\ [F \times H ] \, \scalebox{0.6}{$\square$} \, [G \times H] ({p}^{n}) &= \sum_{\underset{(a ,  b) = 1}{ab={p}^{n}}} F(a)H(a)G(b)H(b)W(a,b) \\
	&=F(1)H(1)G({p}^{n})H({p}^{n})W(1,{p}^{n})+F({p}^{n})H({p}^{n})G(1)H(1)W({p}^{n},1)\\
	&=G({p}^{n})H({p}^{n})+F({p}^{n})H({p}^{n})
	\end{align*} 
	So we have the result we want to establish
\end{proof}

\begin{lem}[inverse lemma]\label{lem:inverse}
	\textnormal{Hypothesis :
	commutativity , stability , existance of the neutral element in the structure $(\mathbb{M},  \scalebox{0.6}{$\square$}  , \times ) $ and the distributivity of  $\scalebox{0.6}{$\square$}$ over $\times$.} \\
	\textnormal{In the structure $ (\mathbb{M},  \scalebox{0.6}{$\square$}  , \times ) $  , for all element of  $\mathbb{M}$ , there exist a unique inverse according to the operation $\scalebox{0.6}{$\square$}$ defined as : }
	\[ \forall F \in \mathbb{M}  :  [F \, \scalebox{0.6}{$\square$} \, I_{F}]  = \delta_{1} \iff  \forall  n,p \in \mathbb{N}^{\star} \times \mathbb{P} \,  :  \, I_{F}({p}^{n})=-F({p}^{n}) \]
\end{lem}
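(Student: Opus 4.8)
The plan is to use the fact that, under the stated hypotheses, the distributivity lemma (Lemma~\ref{lem:distributivity}) forces the weight to be the unitary weight, $W(a,b)=1$ when $(a,b)=1$ and $W(a,b)=0$ otherwise, together with the observation that stability collapses the whole problem onto prime powers.

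First I would note that since $F$ and any candidate inverse $I_F$ lie in $\mathbb{M}$, and $\scalebox{0.6}{$\square$}$ preserves multiplicativity by the stability lemma (Lemma~\ref{lem:stability}), both sides of $F \scalebox{0.6}{$\square$} I_F = \delta_1$ are multiplicative functions. Since a multiplicative function is determined by its values on prime powers, the identity $F \scalebox{0.6}{$\square$} I_F = \delta_1$ is equivalent to the family of scalar equations $[F \scalebox{0.6}{$\square$} I_F](p^n) = \delta_1(p^n)$ for every $p \in \mathbb{P}$ and $n \in \mathbb{N}$.

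The key computation is then to evaluate $[F \scalebox{0.6}{$\square$} I_F](p^n) = \sum_{a+b=n} F(p^a)I_F(p^b)W(p^a,p^b)$. The essential simplification is that $W(p^a,p^b)=0$ as soon as $a\geq 1$ and $b\geq 1$, because in that case $(p^a,p^b)=p^{\min(a,b)}>1$; only the two boundary terms $(a,b)=(n,0)$ and $(a,b)=(0,n)$ survive. Using $F(1)=I_F(1)=1$ and $W(p^n,1)=W(1,p^n)=1$ (both gcds being $1$), this yields the clean relation $[F \scalebox{0.6}{$\square$} I_F](p^n)=F(p^n)+I_F(p^n)$ for $n\geq 1$.

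From this both implications follow at once. For the direct sense, $\delta_1(p^n)=0$ for $n\geq 1$ forces $I_F(p^n)=-F(p^n)$, which is the announced formula. For the converse, defining $I_F(p^n):=-F(p^n)$ on prime powers and extending multiplicatively produces an element of $\mathbb{M}$ satisfying $[F \scalebox{0.6}{$\square$} I_F](1)=1=\delta_1(1)$ and $[F \scalebox{0.6}{$\square$} I_F](p^n)=0=\delta_1(p^n)$ for $n\geq 1$, hence $F \scalebox{0.6}{$\square$} I_F=\delta_1$. Uniqueness is immediate, since the relation pins down $I_F$ on every prime power and a multiplicative function is determined by those values (equivalently, inverses in a commutative monoid are unique when they exist). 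I expect no real obstacle: the single delicate point is recognising that the unitary weight annihilates every mixed prime-power term, which is precisely what collapses the usual recursive Dirichlet-inverse computation into the one-term identity above.
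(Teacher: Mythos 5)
Your proposal is correct and follows essentially the same route as the paper: reduce the identity $F \, \scalebox{0.6}{$\square$} \, I_{F} = \delta_{1}$ to prime powers, observe that the unitary weight kills every mixed term in $\sum_{ab=p^{n}} F(a)I_{F}(b)W(a,b)$ so that only $F(p^{n})+I_{F}(p^{n})$ survives, and conclude $I_{F}(p^{n})=-F(p^{n})$ together with $I_{F}(1)=1$. You are in fact slightly more complete than the paper, which only carries out the forward computation and leaves the converse, the multiplicative extension, and uniqueness implicit.
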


\begin{proof}
	let's compute  : 
	\begin{align*}
	\forall F \in \mathbb{M} \forall  n,p \in \mathbb{N}^{\star} \times \mathbb{P}  :  [F \, \scalebox{0.6}{$\square$} \, I_{F}]({p}^{n}) &= \sum_{\underset{(a ,  b) = 1}{ab = {p}^{n}}} F(a) I_{F}(b)W(a,b) \\
	&= F(1)I_{F}({p}^{n}) + F({p}^{n})I_{F}(1) \\
	&= I_{F}({p}^{n})+F({p}^{n}) \\
	&=\delta_{1}({p}^{n}) \\
	&=0
	\end{align*}
	So
	\[ \forall F \in \mathbb{M} \forall  n,p \in \mathbb{N}^{\star} \times \mathbb{P}  :  I_{F}({p}^{n}) = -F({p}^{n})\]
	In the case n=0  : 
	\begin{align*}
	\forall F \in \mathbb{M}   :  [F \, \scalebox{0.6}{$\square$} \, I_{F}](1) &= \sum_{\underset{(a ,  b) = 1}{ab = 1}} F(a) I_{F}(b)W(a,b) \\
	&= F(1)I_{F}(1) \\
	&=\delta_{1}(1) \\
	&=1
	\end{align*}
	So
	$$ I_{F}(1)=1  $$
	let's sum up :
	\begin{align*}
	\forall F \in \mathbb{M} \forall  n,p \in \mathbb{N} \times \mathbb{P}  :  I_{F}({p}^{n}) =  
	\begin{cases}
	\begin{array}{ll}
	1 \,\textrm{ if } \, n = 0 \\
	-F({p}^{n}) \,\textrm{ if not }
	\end{array}
	\end{cases}
	\end{align*}
	witch represent the same function here :
	\begin{align*}
	\forall F \in \mathbb{M} \forall  n \in \mathbb{N}^{\star}  :  I_{F}(n) = F^{-1}(n) = (-1)^{\omega(n)} F(n)
	\end{align*}
\end{proof}

In this section , we present the main results that we obtened and concluded using the precedent lemmas  : 

\unicity*

\begin{proof}
	by combining the precedent lemmas
\begin{align*}		
	\begin{split}
	& (\mathbb{M},  \scalebox{0.6}{$\square$}  , \times ) ~ \, \textnormal{is a commutative ring}
	  \\ & \Rightarrow   
		\begin{cases}
			\begin{array}{l rcl}
				\forall a,b \in \mathbb{N}^{\star}  :   W(a,b) = W(b,a) $ ( \nameref{lem:commutativity} :  \ref{lem:commutativity} ) $ \\ 
				\forall a,b,c,d \in \mathbb{N}^{\star} ,\, ( ab , cd ) = 1  :   W(a,b)W(c,d)=W(ac,bd) $ ( \nameref{lem:stability} :  \ref{lem:stability} ) $ \\
				\forall (n,p) \in \mathbb{N}\times \mathbb{P} ,\, W(1,p^{n})=1 $ ( \nameref{lem:identity} :  \ref{lem:identity} ) $ \\
				\forall a,b,c \in \mathbb{N}^{\star} ,\, W(a,b)W(ab,c)=W(b,c)W(bc,a) $ ( \nameref{lem:associativity} :  \ref{lem:associativity} ) $ \\
				\begin{array}{l rcl}
					\forall a,b \in \mathbb{N}^{\star}  :  W(a,b) =
					\begin{cases}
						\begin{array}{ll}
							1 \,\textnormal{ if } \, (a ,  b) = 1 \\
							0 \,\textnormal{ if not}
						\end{array}
					\end{cases} $ ( \nameref{lem:distributivity} :  \ref{lem:distributivity} ) $
				\end{array}
			\end{array}
		\end{cases}  \\
	& \Rightarrow
		\begin{cases}
			\begin{array}{l rcl}
			\forall a,b \in \mathbb{N}^{\star}  :  W(a,b) =
				\begin{cases}
					\begin{array}{ll}
						1 \,\textnormal{ if } \, (a ,  b) = 1 \\
						0 \,\textnormal{ if not}
					\end{array}
				\end{cases}
			\end{array} 
		\end{cases}
	\end{split}
\end{align*}
invertly (the checking follows by testing each of the properties) :
\begin{align*}		
\begin{split}
\begin{array}{l rcl}
\forall a,b \in \mathbb{N}^{\star}  :  W(a,b) =
\begin{cases}
\begin{array}{ll}
1 \,\textnormal{ if } \, (a ,  b) = 1 \\
0 \,\textnormal{ if not}
\end{array}
\end{cases}
\end{array} 
& \Rightarrow  
(\mathbb{M},  \scalebox{0.6}{$\square$}  , \times ) ~ \, \textnormal{is a commutative ring}\\
\end{split}
\end{align*}
\end{proof}

\section{Ring's propositions proofs}
We will prove in this section many theorems previously stated, we will also present propositions that are closely related to them
\refactoristion*

\begin{proof}
\begin{align*}
\forall F,G \in \mathbb{M}_{c} : D(F \star G,s)
	& = D(F,s) \times D(G,s)   \\
	& = \Bigg[ \sum_{n=1}^{+\infty} \frac{F(n)}{n^{s}} \Bigg] \times \Bigg[ \sum_{q=1}^{+\infty} \frac{G(q)}{q^{s}} \Bigg] \\
	& =  \sum_{n=1}^{\infty} \sum_{q=1}^{\infty} \frac{F(n)}{n^{s}}
	\frac{G(q)}{q^{s}} \\
	& = \sum_{\underset{m \wedge k =1}{m=1 k=1}}  \sum_{{r=1}}^{+\infty} 
	\frac{F(mr)}{(mr)^{s}}
	\frac{G(kr)}{(kr)^{s}} \\
	& = \sum_{{r=1}}^{+\infty} \frac{F(r)G(r)}{(r)^{2s}} \sum_{m \wedge k =1}  \Bigg[ \frac{F(m)}{(m)^{s}} 
	\frac{G(k)}{(k)^{s}} \Bigg] \\
	& = \sum_{{r=1}}^{+\infty} \frac{F(r)G(r)}{(r)^{2s}} \sum_{l=1}^{\infty}\sum_{\underset{mk=l}{m \wedge k =1}}  \Bigg[ \frac{F(m)}{(m)^{s}} 
	\frac{G(k)}{(k)^{s}} \Bigg] \\
	& = \Bigg[ \sum_{r=1}^{\infty} \frac{F(r)G(r)}{r^{2s}} \Bigg] \Bigg[ \sum_{l=1}^{\infty}\sum_{\underset{mk=l}{m \wedge k =1}}  \Bigg[ \frac{F(m)}{(m)^{s}} 
	\frac{G(k)}{(k)^{s}} \Bigg] \\ 
	& = \Bigg[ \sum_{r=1}^{\infty} \frac{F(r)G(r)}{r^{2s}} \Bigg] \Bigg[ \sum_{l=1}^{\infty}\frac{ \sum_{\underset{mk=l}{m \wedge k =1}}  F(m) G(k) }{l^{s}} \Bigg] \\ 
	& = \Bigg[ \sum_{r=1}^{\infty} \frac{F(r)G(r)}{r^{2s}} \Bigg] \Bigg[ \sum_{l=1}^{\infty}\frac{ F \square G (l) }{l^{s}} \Bigg] \\ 
	& = D(F \times G,2s) \times D( F \square G ,s)
\end{align*}
\end{proof}

The alternative version have a similar prove :
\realimsplit

\begin{proof}
	\begin{align*}
	F,G \in \mathbb{M}_{c} ~ \forall y \in \mathbb{C} : \\ D(F,s) \times D(G,\overline{s}) 
	& = \Bigg[ \sum_{n=1}^{+\infty} \frac{F(n)}{n^{s}} \Bigg] \times \Bigg[ \sum_{q=1}^{+\infty} \frac{G(q)}{q^{\overline{s}}} \Bigg] \\
	& =  \sum_{n=1}^{\infty} \sum_{q=1}^{\infty} \frac{F(n)}{n^{s}}
	\frac{G(q)}{q^{\overline{s}}} \\
	& = \sum_{\underset{m \wedge k =1}{m=1 k=1}}  \sum_{{p=1}}^{+\infty} 
	\frac{F(mp)}{(mp)^{s}}
	\frac{G(kp)}{(kp)^{\overline{s}}} \\
	& = \sum_{\underset{m \wedge k =1}{m=1 k=1}}  \sum_{{p=1}}^{+\infty} 
	\frac{F(mp)}{(mp)^{x+iy}}
	\frac{G(kp)}{(kp)^{x-iy}} \\
	& = \sum_{{p=1}}^{+\infty} \frac{F(p)G(p)}{(p)^{2x}} \sum_{m \wedge k =1}  \Bigg[ \frac{F(m)}{(m)^{x+iy}} 
	\frac{G(k)}{(k)^{x-iy}} \Bigg] \\
	& = \sum_{{p=1}}^{+\infty} \frac{F(p)G(p)}{(p)^{2x}} \sum_{l=1}^{\infty}\sum_{\underset{mk=l}{m \wedge k =1}}  \Bigg[ \frac{F(m)}{(m)^{x+iy}} 
	\frac{G(k)}{(k)^{x-iy}} \Bigg] \\
	& = \Bigg[ \sum_{p=1}^{\infty} \frac{F(p)G(p)}{p^{2x}} \Bigg] \Bigg[ \sum_{l=1}^{\infty}\sum_{\underset{mk=l}{m \wedge k =1}}  \Bigg[ \frac{F(m)}{(m)^{x+iy}} 
	\frac{G(k)}{(k)^{x-iy}} \Bigg] \\ 
	& = \Bigg[ \sum_{p=1}^{\infty} \frac{F(p)G(p)}{p^{2x}} \Bigg] \Bigg[ \sum_{l=1}^{\infty}\frac{ \sum_{\underset{mk=l}{m \wedge k =1}}  \frac{F(m)}{(m)^{iy}} 
	\frac{G(k)}{(k)^{-iy}} }{l^{x}} \Bigg] \\ 
	& = \Bigg[ \sum_{p=1}^{\infty} \frac{F(p)G(p)}{p^{2x}} \Bigg] \Bigg[ \sum_{l=1}^{\infty}\frac{ \frac{F}{\text{Id}_{e}^{iy}} \square \frac{G}{\text{Id}_{e}^{-iy}} (l) }{l^{x}} \Bigg] \\ 
	& = D(F \times G,2x) \times D( \frac{F}{\text{Id}_{e}^{iy}} \square \frac{G}{\text{Id}_{e}^{-iy}} ,x) \\
	& = \Bigg[ \sum_{p=1}^{\infty} \frac{F(p)G(p)}{p^{2x}} \Bigg] \Bigg[ \sum_{l=1}^{\infty}\frac{ F ~ \times ~ \text{Id}_{e}^{-iy} \square G ~ \times ~ \text{Id}_{e}^{iy} (l) }{l^{x}} \Bigg] \\ 
	\end{align*}
\end{proof}

\orthproduct*

\begin{proof}
	\[ \forall F,G \in \mathbb{M_{c}} ~: ~  F \times G = \delta_{1} \Rightarrow D(F \times G,s) = 1 \]
	so :
	\[ \forall F,G \in \mathbb{M_{c}} ~: ~  D(F,s) \times D(G,s) = D( F \square G ,s) \times D(F \times G,s) = D( F \square G ,s)  \]
\end{proof}

\primecompfactor*

\begin{proof}
	\textnormal{Let $ A \in \mathbb{P} $ and $ \bar{A} \in \mathbb{P} $ be complementary in  $ \mathbb{P} $  , let F an arithmetic completly multiplicative function : }
	\begin{align*}
	D(\mathds{1}_{A} \times F,s) \times D(\mathds{1}_{\bar{A}} \times F,s) 
	&= D([\mathds{1}_{A} \times F ] \times [ \mathds{1}_{\bar{A}} \times F ] ,s) \times D([ \mathds{1}_{A} \times F] \scalebox{0.5}{$\square$} [ \mathds{1}_{\bar{A}} \times F],s) \\
	&= D( \mathds{1}_{A} \times F \times  \mathds{1}_{\bar{A}} \times F,s) \times D([ \mathds{1}_{A} \times F] \scalebox{0.5}{$\square$} [ \mathds{1}_{\bar{A}} \times F],s)\\
	&= D( \mathds{1}_{A} \times  \mathds{1}_{\bar{A}} \times F  \times F ,s) \times D([ \mathds{1}_{A} \scalebox{0.5}{$\square$}  \mathds{1}_{\bar{A}}] \times F,s)\\
	&= D( \mathds{1}_{\emptyset} \times F  \times F ,s) \times D(\mathds{1} \times F,s) \\
	&= D( F,s)
	\end{align*}

\end{proof}

\twotime*

\begin{proof}
	\[[F ~ \square ~ F ] (m) = \prod_{p|m} F({p}^{v_{p}(m)})+F({p}^{v_{p}(m)}) = [{2}^{\omega(.)} \times F ](m) \]
\end{proof}

\begin{restatable}{prop}{ideplusone}\label{prop:ideplusone}

	\textnormal{Identity in $ (\mathbb{M},  \square  , \times )  $ : }
	\[\scalebox{1}{$ \text{Id}_{e} $}  \square \mathds{1} (m) = \hat{\sigma}(m)\]
	\textnormal{here the sigma is the sum of the prime divisors with there complementary. }

\end{restatable}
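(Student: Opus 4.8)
The plan is to reduce everything to prime powers via multiplicativity. By Theorem~\ref{thm:unicity}, the weight function of the ring satisfies $W(a,b)=1$ when $(a,b)=1$ and $W(a,b)=0$ otherwise, so the operation $\square$ is the \emph{unitary} convolution
\[ [F \square G](m) = \sum_{\substack{ab=m \\ (a,b)=1}} F(a)\,G(b). \]
First I would invoke the stability lemma (Lemma~\ref{lem:stability}): since $\text{Id}_{e},\mathds{1}\in\mathbb{M}$, the convolution $\text{Id}_{e}\square\mathds{1}$ again lies in $\mathbb{M}$ and is therefore multiplicative. Consequently it suffices to compute its value on a prime power and then take the product over the primes dividing $m$.

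Next I would evaluate on $p^{n}$. The only factorisations $ab=p^{n}$ with $(a,b)=1$ are $(a,b)=(1,p^{n})$ and $(a,b)=(p^{n},1)$, so, using $\text{Id}_{e}(k)=k$ and $\mathds{1}(k)=1$,
\[ [\text{Id}_{e}\square\mathds{1}](p^{n}) = \text{Id}_{e}(1)\mathds{1}(p^{n}) + \text{Id}_{e}(p^{n})\mathds{1}(1) = 1 + p^{n}. \]
This is exactly the prime-power reduction already used in Proposition~\ref{prop:twotime}. Writing $m=\prod_{p\mid m}p^{v_{p}(m)}$ and applying multiplicativity then gives
\[ [\text{Id}_{e}\square\mathds{1}](m) = \prod_{p\mid m}\bigl(1 + p^{v_{p}(m)}\bigr). \]

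It remains to identify this product with $\hat{\sigma}(m)$. Expanding it yields $\sum_{S}\prod_{p\in S}p^{v_{p}(m)}$, the sum taken over all subsets $S$ of the prime divisors of $m$; each such product is precisely a unitary divisor $d\mid m$ (one for which $(d,m/d)=1$), paired with its coprime complement $m/d$. Hence the product equals the sum of the unitary divisors of $m$, which is the intended meaning of $\hat{\sigma}$. The computation is routine throughout; the only point demanding care is this final identification, namely making explicit that $\prod_{p\mid m}(1+p^{v_{p}(m)})$ is the unitary divisor sum rather than the ordinary $\sigma(m)$. Once the definition of $\hat{\sigma}$ is fixed, the equality is immediate.
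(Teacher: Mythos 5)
Your proof is correct and follows essentially the same route as the paper: the paper's one-line argument is precisely the chain $[\text{Id}_{e}\,\square\,\mathds{1}](m)=\prod_{p\mid m}\bigl(p^{v_{p}(m)}+1\bigr)=\sum_{ab=m,\,(a,b)=1}a=\hat{\sigma}(m)$, which you reproduce with the multiplicativity (stability lemma) and prime-power steps made explicit. Your added care in identifying the product with the \emph{unitary} divisor sum rather than $\sigma(m)$ is a welcome clarification of what the paper leaves implicit.
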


\begin{proof}

	\[\scalebox{1}{$ \text{Id}_{e} $}  \square \mathds{1} (m) = \prod_{p|m} p^{v_{p}(m)} + 1 = \sum_{ab=m ~ ( a , b ) = 1} a = \hat{\sigma}(m)\]
\end{proof}

\begin{restatable}{prop}{eulerchar}\label{prop:eulerchar}
		\textnormal{Identity in $ (\mathbb{M},  \square  , \times )  $ : }
	\[\phi = \scalebox{1}{$ \text{Id}_{e} $} \times \Bigg[\mathds{1}\scalebox{0.6}{$\square$}\frac{(-1)^{\omega}}{\mathbf{rad}}\Bigg]\]
\end{restatable}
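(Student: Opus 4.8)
The plan is to exploit multiplicativity and reduce the claimed identity to a pointwise check on prime powers. First I would observe that every function appearing is multiplicative: $\mathds{1}$ and $\text{Id}_{e}$ are completely multiplicative, and $g := \frac{(-1)^{\omega}}{\mathbf{rad}}$ is multiplicative because both $n \mapsto (-1)^{\omega(n)}$ and $n \mapsto \mathbf{rad}(n)$ are. By the stability lemma (Lemma~\ref{lem:stability}), taken with the coprimality weight, the unitary convolution $\mathds{1}\,\square\,g$ lies in $\mathbb{M}$, and by the multiplication lemma (Lemma~\ref{lem:multiplication}) so does $\text{Id}_{e} \times [\mathds{1}\,\square\,g]$. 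Since $\phi$ is also multiplicative and a multiplicative function is entirely determined by its values on prime powers, it suffices to verify the equality at $p^{k}$ for every prime $p$ and every $k \geq 1$ (both sides equalling $1$ at $n=1$).

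Next I would compute the convolution at a prime power. Because $p^{k}$ admits only the two coprime factorisations $p^{k} = 1 \cdot p^{k} = p^{k} \cdot 1$, the sum defining $\square$ collapses to
\[ [\mathds{1}\,\square\,g](p^{k}) = \mathds{1}(1)\,g(p^{k}) + \mathds{1}(p^{k})\,g(1) = g(1) + g(p^{k}). \]
Evaluating $g$, one has $\omega(p^{k}) = 1$ and $\mathbf{rad}(p^{k}) = p$ for all $k \geq 1$, so $g(1) = 1$ and $g(p^{k}) = \frac{(-1)^{1}}{p} = -\frac{1}{p}$, whence $[\mathds{1}\,\square\,g](p^{k}) = 1 - \frac{1}{p} = \frac{p-1}{p}$.

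Finally, multiplying by $\text{Id}_{e}(p^{k}) = p^{k}$ gives
\[ [\text{Id}_{e} \times (\mathds{1}\,\square\,g)](p^{k}) = p^{k} \cdot \frac{p-1}{p} = p^{k-1}(p-1) = \phi(p^{k}), \]
the classical value of Euler's totient on a prime power. Since both sides agree on all prime powers and are multiplicative, they coincide on all of $\mathbb{N}^{\star}$, which establishes the identity. There is no serious obstacle here; the only points requiring care are the justification that both sides genuinely lie in $\mathbb{M}$ (so that reduction to prime powers is legitimate) and the correct evaluation of $g$ on prime powers, together with the observation that the coprime factorisations of $p^{k}$ are exhausted by its two endpoints.
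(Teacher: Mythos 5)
Your proof is correct, and it takes a different (and more fully detailed) route than the paper's. You reduce the identity to a pointwise check at prime powers: you invoke the stability and multiplication lemmas to guarantee that $\text{Id}_{e} \times \big[\mathds{1}\,\scalebox{0.6}{$\square$}\,\frac{(-1)^{\omega}}{\mathbf{rad}}\big]$ is multiplicative, note that the only coprime factorisations of $p^{k}$ are $1\cdot p^{k}$ and $p^{k}\cdot 1$, and verify $p^{k}\,(1-\tfrac1p)=\phi(p^{k})$. The paper instead argues globally: it simply cites Euler's product formula $\phi(n)=n\prod_{p\mid n}\big(1-\tfrac1p\big)$, the implicit point being that expanding this product over subsets $S$ of the primes dividing $n$ yields
\[
\prod_{p\mid n}\Big(1-\frac1p\Big)=\sum_{S\subseteq\{p\,:\,p\mid n\}}\frac{(-1)^{|S|}}{\prod_{p\in S}p}=\sum_{\substack{ab=n\\(a,b)=1}}\frac{(-1)^{\omega(b)}}{\mathbf{rad}(b)}=\Big[\mathds{1}\,\scalebox{0.6}{$\square$}\,\tfrac{(-1)^{\omega}}{\mathbf{rad}}\Big](n),
\]
since the unitary divisors $b$ of $n$ are in bijection with such subsets via $b=\prod_{p\in S}p^{v_{p}(n)}$. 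Your local-to-global argument buys rigour and reusability: it is the systematic template for verifying any identity between multiplicative functions inside the ring, and it makes explicit exactly which structural lemmas legitimise the reduction. The paper's expansion buys brevity and transparency about provenance — it exhibits the identity as nothing more than a rewriting of Euler's formula in the ring's notation — but as written it leaves the combinatorial expansion entirely to the reader, whereas your version has no such gap.
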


\begin{proof} 
	The rewriting of the function $ \phi $  are derived simply from the following formula :
	\[\phi(n) = n \times \prod_{i=1}^{\omega(n)}{\left(1-\frac1{p_i}\right)} \]
\end{proof}

\sumchar*

\begin{proof}
\begin{align*}
	 \forall n \in \mathbb{N}-\{0,1\} ~ \forall a \in \mathbb{N}-\{0,1\}  : \\ \bigg[\stackrel{\phi(n)} {  \mathlarger{ \underset{k=1}{  \mathlarger{ \mathlarger{ \square } }  } }   }\chi_{k}  \bigg] (a)
	 &=\prod_{p|a} \sum_{k=1}^{\phi(n)} \chi_{k}({p}^{v_{p}(a)}) \\
	 &=\prod_{p|a} \phi(n) \cdot \delta_{{p}^{v_{p}(a)} \equiv 1 \mod n} \\
	 &=\prod_{p|a} \phi(n) \cdot \delta_{{p}^{v_{p}(a)}-1 \equiv 0 \mod n} \\
	 &=\phi(n)^{\omega(a)} \cdot \delta_{[\prod_{p|a} {p}^{v_{p}(a)}-1] \equiv 0 \mod n} \\
	 &=\phi(n)^{\omega(a)} \cdot \delta_{[ \text{Id}_{e} -\mathds{1}](a) \equiv 0 \mod n} \\
	 &=\phi(n)^{\omega(a)} \cdot \delta_{n | [  \text{Id}_{e} - \mathds{1}](a) } \\
\end{align*}
\end{proof}

\zetaminusone*

\begin{proof}
	\begin{align*}
		\sum_{n=1}^{\infty} \frac{1}{n^{s}} \cdot \sum_{p \in \mathbb{P}} \frac{1}{p^{s}} 
		 \frac{1}{\omega(np)} 
		&= \sum_{n=1}^{\infty} \sum_{p \in \mathbb{P}}  \frac{1}{p^{s}} \cdot \frac{1}{n^{s}} \cdot \frac{1}{\omega(np)}  \\
		&=\sum_{m=2}^{\infty} \sum_{\underset{p \in \mathbb{P} ~ n \in \mathbb{N^\star}}{pn=m} ~ }\frac{1}{np^{s}} \cdot \frac{1}{\omega(np)} \\
		&=\sum_{m=2}^{\infty} \sum_{\underset{p \in \mathbb{P} ~ n \in \mathbb{N^\star}}{pn=m} ~ }\frac{1}{m^{s}} \cdot \frac{1}{\omega(m)} \\
		&=\sum_{m=2}^{\infty} \frac{1}{m^{s}} \cdot \frac{1}{\omega(m)} \sum_{\underset{p \in \mathbb{P} ~ n \in \mathbb{N^\star}}{pn=m} ~ } 1 \\
		&=\sum_{m=2}^{\infty} \frac{1}{m^{s}} \cdot \frac{1}{\omega(m)} \cdot \omega(m) \\
		&=\sum_{m=2}^{\infty} \frac{1}{m^{s}} \\
		&=\zeta(s)-1 
	\end{align*}
	
\end{proof}

\zetaminusonenext*

\begin{proof}
	\begin{align*}
		\zeta(s)-1
		&= \sum_{n=1}^{\infty} \frac{1}{n^{s}} \cdot \sum_{p \in \mathbb{P}} \frac{1}{p^{s}} 
		\frac{1}{\omega(np)}\\
		&= \sum_{n=1}^{\infty} \sum_{p | n} \frac{1}{p^{s}} \cdot  \frac{1}{\omega(n)} \cdot \frac{1}{n^{s}} + \frac{1}{\omega(n)+1} \cdot \sum_{p \nmid n} \frac{1}{p^{s}} \cdot \frac{1}{n^{s}} \\ 
		&= \sum_{n=1}^{\infty} \frac{1}{n^{s}} \cdot [\sum_{p | n} \frac{1}{p^{s}} \cdot \frac{1}{\omega(n)} + \frac{1}{\omega(n)+1} \cdot \sum_{p \in \mathbb{P}} \frac{1}{p^{s}}- \sum_{p | n} \frac{1}{p^{s}} \cdot  \frac{1}{\omega(n)+1}  ]  \\ 
		&= \sum_{n=1}^{\infty} \frac{1}{n^{s}} \cdot [ \frac{1}{\omega(n)+1} \cdot \sum_{p \in \mathbb{P}} \frac{1}{p^{s}} + \sum_{p | n} \frac{1}{p^{s}}  \cdot [ \frac{1}{\omega(n)}  - \frac{1}{\omega(n)+1} ] ] \\ 
		&= \sum_{n=1}^{\infty} \frac{1}{n^{s}} \cdot [ \frac{1}{\omega(n)+1} \cdot \sum_{p \in \mathbb{P}} \frac{1}{p^{s}} + \sum_{p | n} \frac{1}{p^{s}}  \cdot [ \frac{1}{\omega(n)} \cdot \frac{1}{\omega(n)+1} ] ] \\ 
		&= \sum_{n=1}^{\infty} \frac{1}{n^{s}} \cdot [ \frac{1}{\omega(n)+1} \cdot \sum_{p \in \mathbb{P}} \frac{1}{p^{s}} + \frac{1}{\omega(n)+1} \cdot \sum_{p | n} \frac{1}{p^{s}}  \cdot  \frac{1}{\omega(n)} ] \\ 
		&= \sum_{n=1}^{\infty} \frac{1}{n^{s}} \cdot \frac{1}{\omega(n)+1} \cdot [ \sum_{p \in \mathbb{P}} \frac{1}{p^{s}} +  \frac{1}{\omega(n)} \cdot \sum_{p | n} \frac{1}{p^{s}}  ]  
	\end{align*}
\end{proof}

\section{Derivation over the ring $ ( \mathds{M} , \square , \times ) $  }
Using the previously constructed ring $ (\mathbb{M},  \scalebox{0.6}{$\square$}  , \times ) $ , we will in this section prove a result of the derivation operators that operate on this space

\begin{dfn}
	 An element $ F \in \mathbb{M} $ is said to be idempotent if :
	 \[ F \times F = F \]
	 
\end{dfn}

\begin{dfn}
	 A Dirichlet principle character $ \chi $ is a Dirichlet character of period $ k \in \mathbb{N}^{\star} $ with the following property :
\[
	 \chi(n) =
	\begin{cases}
	\begin{array}{ll}
	1 \,\textnormal{ if } \, (n ,  k) = 1 \\
	0 \,\textnormal{ if not}
	\end{array}
	\end{cases} 
\]

\end{dfn}
\begin{prop}

\textnormal{A Dirichlet principle character is always idempotent :}
\[ \chi \times \chi = \chi \]

\end{prop}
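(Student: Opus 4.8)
The plan is to prove the identity pointwise by observing that a principal character takes only the values $0$ and $1$, both of which are fixed by squaring. First I would recall that the multiplication $\times$ in the ring is defined by $[\chi \times \chi](n) = \chi(n)\chi(n) = \chi(n)^2$, so the statement $\chi \times \chi = \chi$ is equivalent to verifying $\chi(n)^2 = \chi(n)$ for every $n \in \mathbb{N}^{\star}$.

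Next I would split according to the two cases in the definition of a principal character of period $k$. If $(n, k) = 1$, then $\chi(n) = 1$, and $1^2 = 1 = \chi(n)$. If $(n, k) > 1$, then $\chi(n) = 0$, and $0^2 = 0 = \chi(n)$. In both cases $\chi(n)^2 = \chi(n)$, so the two functions agree at every argument and hence coincide.

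There is no genuine obstacle here: the essential point is simply that any function valued in the idempotent set $\{0, 1\}$ is automatically idempotent for pointwise multiplication, and a principal character is exactly such a $\{0,1\}$-valued function. The only thing to be careful about is to invoke the precise definition of the principal character (so that no other values occur) and to note that the argument is uniform in $n$, so no multiplicativity or convolution properties are needed.

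\begin{proof}
For every $n \in \mathbb{N}^{\star}$ we have $[\chi \times \chi](n) = \chi(n)\chi(n) = \chi(n)^2$. By definition of the principal character of period $k$, either $(n,k)=1$, in which case $\chi(n)=1$ and $\chi(n)^2 = 1 = \chi(n)$, or $(n,k)>1$, in which case $\chi(n)=0$ and $\chi(n)^2 = 0 = \chi(n)$. In both cases $\chi(n)^2 = \chi(n)$, hence $\chi \times \chi = \chi$.
\end{proof}
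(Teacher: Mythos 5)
Your proof is correct. It differs from the paper's in a small but real way: the paper first invokes the fact that a Dirichlet character is completely multiplicative, hence determined by its values on primes, checks $\chi(p)^{2}=\chi(p)$ there, and then lifts the identity $\chi\times\chi=\chi$ back to all of $\mathbb{N}^{\star}$ via multiplicativity. You instead verify $\chi(n)^{2}=\chi(n)$ pointwise at every $n$ directly from the definition of the principal character, which makes the argument uniform and avoids multiplicativity altogether. Your route is the more elementary and the more robust one: since the paper's definition of a principal character already prescribes the value ($0$ or $1$) at \emph{every} argument, the reduction to primes is an unnecessary detour, and lifting from primes back to general $n$ tacitly uses that $\chi\times\chi$ is again completely multiplicative --- a step the paper leaves implicit. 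What the paper's approach buys is consistency with the rest of the article, where functions in $\mathbb{M}$ are systematically manipulated through their values on prime powers (as in the inverse lemma and the derivation theorem); your pointwise argument is self-contained but does not rehearse that technique.
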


\begin{proof}
A dirichlet character is a completly multiplicative function , so it is characterised by it's value over the primes .
As the possible value of any principle dirichelet character are 0 or 1 , we obtain the idempotence property 

\[ \forall p \in \mathbb{P} ~,~ \chi(p)^{2} =  \chi(p)
 \] 
which give us it's idempotent identity :
\[ \chi \times \chi = \chi \]

\end{proof}

\begin{prop}
	\textnormal{For all modulus} $ k \in \mathbb{N} $ \textnormal{, all dirichlet character set to the power} $ \phi(n) $ \textnormal{are equal to a principle dirichlet character :}
\[
	 \chi(n)^{\phi(k)} =
	\begin{cases}
	\begin{array}{ll}
	1 \,\textnormal{ if } \, (n ,  k) = 1 \\
	0 \,\textnormal{ if not}
	\end{array}
	\end{cases} 
\]

\end{prop}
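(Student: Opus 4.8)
The plan is to split the argument according to the value of the greatest common divisor $(n,k)$, and to reduce the coprime case to Euler's theorem (equivalently, Lagrange's theorem applied to the unit group $(\mathbb{Z}/k\mathbb{Z})^{\star}$ of order $\phi(k)$). Throughout I would use the two character axioms recalled in the excerpt: that $\chi$ is completely multiplicative, $\chi(ab)=\chi(a)\chi(b)$, and that $\chi$ is $k$-periodic with $\chi(n)=0$ whenever $(n,k)>1$.

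First I would dispose of the degenerate case $(n,k)>1$. Here the defining property of a Dirichlet character of modulus $k$ gives $\chi(n)=0$, and since $\phi(k)\geq 1$ is a positive integer, raising to the power $\phi(k)$ preserves this, so $\chi(n)^{\phi(k)}=0$. This is exactly the value of the principal character at $n$, matching the lower branch of the claimed formula.

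The substantive case is $(n,k)=1$. Using complete multiplicativity I would rewrite $\chi(n)^{\phi(k)}=\chi\big(n^{\phi(k)}\big)$. Euler's theorem then yields $n^{\phi(k)}\equiv 1 \pmod{k}$, and chaining this congruence with the $k$-periodicity of $\chi$ gives $\chi\big(n^{\phi(k)}\big)=\chi(1)=1$. Equivalently, one may phrase this by noting that $\chi$ restricts to a group homomorphism $(\mathbb{Z}/k\mathbb{Z})^{\star}\to\mathbb{C}^{\star}$, so its values on units are $\phi(k)$-th roots of unity and hence satisfy $\chi(n)^{\phi(k)}=1$. Combining the two cases produces precisely the principal character, which completes the argument.

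There is no serious obstacle here: the only content beyond bookkeeping is Euler's theorem, and the verification that the two character properties may legitimately be chained in the order $\chi(n)^{\phi(k)}=\chi\big(n^{\phi(k)}\big)=\chi(1)$. The one minor point worth stating explicitly is that $\phi(k)$ is a positive integer (valid for $k\geq 1$), which is what guarantees both that $\chi(n)=0$ forces $\chi(n)^{\phi(k)}=0$ and that Euler's theorem applies with exponent $\phi(k)$.
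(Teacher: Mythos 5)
Your proposal is correct and follows essentially the same route as the paper's proof: complete multiplicativity to write $\chi(n)^{\phi(k)}=\chi\big(n^{\phi(k)}\big)$, the Fermat--Euler theorem together with $k$-periodicity to evaluate this as $\chi(1)=1$ when $(n,k)=1$, and the vanishing property for $(n,k)>1$. Your version is merely a tidier write-up, since you separate the degenerate case $(n,k)>1$ explicitly rather than folding it into the equation chain as the paper does.
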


\begin{proof}
Using the Fermat–Euler theorem , we have :
\[ (n,k) = 1 \Rightarrow n^{\phi(k)} \equiv 1 \textnormal{(mod k)} \]
Which allow us to compute :
\[
	\chi(n^{\phi(k)})  = \chi(1+mk)
	 = \chi(1+mk) =
	\begin{cases}
	\begin{array}{ll}
	1 \,\textnormal{ if } \, (n ,  k) = 1 \\
	0 \,\textnormal{ if not}
	\end{array}
	\end{cases} 
\]
Using the completely multiplicative property of $ \chi $ as a dirichlet character :
\[
	\chi(n^{\phi(k)})  = \chi(n)^{\phi(k)}
\]
So we have :
\[ \chi(n)^{\phi(k)} = \chi(n^{\phi(k)}) = 			\begin{cases}
	\begin{array}{ll}
	1 \,\textnormal{ if } \, (n ,  k) = 1 \\
	0 \,\textnormal{ if not}
	\end{array}
	\end{cases}  \]
\end{proof}

\begin{prop}
\textnormal{Any derivation acting on the ring} $ (\mathbb{M},  \scalebox{0.6}{$\square$}  , \times ) $ \textnormal{is null over all principal dirichlet characters :}
	
	\[ \chi \textnormal{~ is principal} \Rightarrow \nabla(\chi) = \mathds{1}_{\varnothing}  \]
\end{prop}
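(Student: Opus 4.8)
The plan is to reduce the statement to the classical fact that a derivation annihilates idempotents, transported into the concrete ring $(\mathbb{M}, \square, \times)$, and then to read off the conclusion using the doubling identity of Proposition~\ref{prop:twotime}. The starting point is the idempotence $\chi \times \chi = \chi$ of a principal character, just established.

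First I would apply $\nabla$ to this idempotence relation. The Leibniz axiom $\nabla(F \times G) = \nabla(F) \times G \,\square\, F \times \nabla(G)$ together with the commutativity of $\times$ (Lemma~\ref{lem:multiplication}) gives
\[ \nabla(\chi) = \big(\chi \times \nabla(\chi)\big) \,\square\, \big(\chi \times \nabla(\chi)\big). \qquad (\star) \]
Next I would multiply $(\star)$ by $\chi$ in the ring: distributivity of $\square$ over $\times$ (Lemma~\ref{lem:distributivity}), associativity of $\times$, and the idempotence $\chi \times \chi = \chi$ collapse the right-hand side to $\big(\chi \times \nabla(\chi)\big) \,\square\, \big(\chi \times \nabla(\chi)\big)$, which is $\nabla(\chi)$ again by $(\star)$. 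Hence $\chi \times \nabla(\chi) = \nabla(\chi)$, and substituting this back into $(\star)$ leaves the single relation
\[ \nabla(\chi) = \nabla(\chi) \,\square\, \nabla(\chi). \]

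To finish I would invoke Proposition~\ref{prop:twotime}, which rewrites the last line as $\nabla(\chi)(m) = 2^{\omega(m)}\,\nabla(\chi)(m)$ for every $m \in \mathbb{N}^{\star}$. Since $\nabla(\chi) \in \mathbb{M}$ has value $1$ at $m = 1$, while $2^{\omega(m)} - 1 \neq 0$ for every $m > 1$, this forces $\nabla(\chi)(m) = 0$ for all $m > 1$, that is $\nabla(\chi) = \delta_1 = \mathds{1}_\varnothing$; alternatively one could cancel directly using the $\square$-inverse furnished by Lemma~\ref{lem:inverse}. The main point demanding care --- and the only real obstacle --- is to keep the two operations in their proper roles: $\square$ is the additive operation of the ring, so ``adding $\nabla(\chi)$ to itself'' is a genuine unitary convolution rather than pointwise doubling, and it is exactly Proposition~\ref{prop:twotime} (or the inverse lemma) that legitimises the final cancellation $x \,\square\, x = x \Rightarrow x = \mathds{1}_\varnothing$.
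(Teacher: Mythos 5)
Your proof is correct, and while it opens exactly as the paper does, its second half takes a genuinely different route. Both arguments differentiate the idempotence $\chi\times\chi=\chi$ via the Leibniz axiom to get $\nabla(\chi)=(\chi\times\nabla(\chi))\,\square\,(\chi\times\nabla(\chi))$, which the paper compresses to $\nabla(\chi)=\numbermathbb{2}\times\chi\times\nabla(\chi)$, the $\numbermathbb{2}$ absorbing the doubling identity of Proposition~\ref{prop:twotime}. The paper then finishes pointwise: it rewrites this as $[\numbermathbb{2}\times\chi-\numbermathbb{1}]\times\nabla(\chi)=\numbermathbb{0}$ and evaluates at prime powers, where $\chi(p^{n})\in\{0,1\}$ makes the bracket equal to $\pm 1$, so $\nabla(\chi)(p^{n})=0$, and multiplicativity gives $\nabla(\chi)=\numbermathbb{0}$. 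You instead remain inside the ring: multiplying by $\chi$ and using distributivity (Lemma~\ref{lem:distributivity}) plus idempotence yields $\chi\times\nabla(\chi)=\nabla(\chi)$, hence $\nabla(\chi)=\nabla(\chi)\,\square\,\nabla(\chi)$, and the $\square$-cancellation (Proposition~\ref{prop:twotime} or Lemma~\ref{lem:inverse}) forces $\nabla(\chi)=\delta_{1}=\mathds{1}_{\varnothing}$. The trade-off: the paper's finish is shorter, but it leans on the specific $\{0,1\}$ values of a principal character, and its pointwise identity is strictly valid only at prime powers (at $m=1$ the left-hand side vanishes while $\numbermathbb{0}(1)=\delta_{1}(1)=1$; moreover $\numbermathbb{2}\times\chi-\numbermathbb{1}$ is not itself an element of $\mathbb{M}$). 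Your finish never inspects the values of $\chi$ at all, so it actually proves the stronger statement that a derivation annihilates \emph{every} $\times$-idempotent of $(\mathbb{M},\square,\times)$ --- which is precisely the fact the paper's proof of Theorem~\ref{thm:derivation} invokes later for the idempotents $\mathds{1}_{Z_{\chi}}$ and $(\chi\,\square\,\mathds{1}_{Z_{\chi}})^{\phi(n)}$, so your route also makes that subsequent step self-contained.
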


\begin{rem}
For notation purpuses , we will write the function $ \mathds{1}_{\varnothing} $ as $ \numbermathbb{0} $ in this section
\end{rem}

\begin{proof}
Let's take a principale dirichlet character $ \chi $ , by the derivation operator , we have :
	\[ \nabla(\chi^{2})  =  \numbermathbb{2} \times \chi \times \nabla(\chi) \]
The principal dirichlet character are idempotent , so we have the following identity
	\[ \nabla(\chi^{2})  =  \nabla(\chi) \]
Which gives us
	\[ \nabla(\chi) = \numbermathbb{2} \times \chi \times \nabla(\chi) = \numbermathbb{0} \]
	\[ \Rightarrow [ \numbermathbb{2} \times \chi - \numbermathbb{1} ] \times \nabla(\chi) = \numbermathbb{0}  \]
	Knowing that a multiplicative function is characterised by it's value over prime powers , we have :
	 \[ \forall p \in \mathbb{P} , \forall n \in \mathbb{N}^{\star} ~,~  \chi(p^{v_{p}(n)}) = 0 \textnormal{ or } \chi(p^{v_{p}(n)}) = 1 \Rightarrow \nabla(\chi)(p^{v_{p}(n)})  = 0 \]

Wich gives us that for all principal dirichlet characters :
\[ \nabla(\chi) = \numbermathbb{0}  \]
\end{proof}

\derivation*

\begin{proof}
Let's define the set of zeros of a dirichlet character 
\[ Z_{\chi} = \left\{ a^{n} | n \in \mathbb{N}^{\star} ,  a \in \mathbb{P} , \chi(a)=0 \right\} \]
over which we could build an indicatrice function $ \mathds{1}_{ Z_{\chi} } $ . We have constructed the set $  Z_{\chi} $ to fill the zeros , and only the zeros , of the dirichlet character $ \chi $ , so we have  : 
\[ \forall n \in \mathbb{N}^{\star}, \forall p \in \mathbb{P} ~,~  [ \chi(p^{n}) = 0  \textnormal{ and } \mathds{1}_{ Z_{\chi} }(p^{n}) \ne 0 ] \textnormal{ or } [ \chi  (p^{n}) \ne 0  \textnormal{ and } \mathds{1}_{ Z_{\chi} }(p^{n}) = 0 ] \]
note that this or is exclusive , which gives us 
\[ \forall n \in \mathbb{N}^{\star}, \forall p \in \mathbb{P} ~,~ ( \chi \scalebox{0.6}{$\square$} \mathds{1}_{ Z_{\chi} } )(p^{n}) \ne 0   \]
and
\[  \chi \times \mathds{1}_{ Z_{\chi} } =\numbermathbb{0} \]
by the derivation operator rules, we have :
\begin{align*}
\nabla( \chi \scalebox{0.6}{$\square$} \mathds{1}_{ Z_{\chi} } \scalebox{0.6}{$\square$} (-\mathds{1}_{ Z_{\chi} }) )  = \nabla( \chi \scalebox{0.6}{$\square$} \mathds{1}_{ Z_{\chi} } ) \scalebox{0.6}{$\square$} \nabla(-\mathds{1}_{ Z_{\chi} })
\end{align*}
\begin{align*}
 \forall n \in \mathbb{N}^{\star}~~ \forall p \in \mathbb{P} ~,~ ( \chi \scalebox{0.6}{$\square$} \mathds{1}_{ Z_{\chi} } )^{\phi(n)} 
 &  = \stackrel{\phi(n)} { \mathlarger{ \mathlarger{ \underset{k=1}{ ~  \mathlarger{ \mathlarger{ \square } }  ~ } }}  }  \binom{\phi(n)}{k} \times \chi^{k} \times \mathds{1}_{ Z_{\chi} }^{\phi(n)-k} \\
 &  = \chi^{\phi(n)}  \scalebox{0.6}{$\square$}   \mathds{1}_{ Z_{\chi} }^{\phi(n)} \\
 &  = \chi  \scalebox{0.6}{$\square$}   \mathds{1}_{ Z_{\chi} }
\end{align*}

so the element $ ( \chi \scalebox{0.6}{$\square$} \mathds{1}_{ Z_{\chi} } )^{\phi(n)}  $ is an idempotent element

we could compute the derivation of this idempotent element in two way :
\[ \nabla( ( \chi \scalebox{0.6}{$\square$} \mathds{1}_{ Z_{\chi} } )^{\phi(n)} ) = \numbermathbb{0} \]	
\[ \nabla( \chi \scalebox{0.6}{$\square$} \mathds{1}_{ Z_{\chi} } )^{\phi(n)} ) = \phi(n) \times (\chi \scalebox{0.6}{$\square$} \mathds{1}_{ Z_{\chi} } )^{\phi(n)-1} \times \nabla (\chi \scalebox{0.6}{$\square$} \mathds{1}_{ Z_{\chi} } ) \]	
combining the two equaltions we obtain 
\[ (\chi \scalebox{0.6}{$\square$} \mathds{1}_{ Z_{\chi} } ) \times \nabla (\chi \scalebox{0.6}{$\square$} \mathds{1}_{ Z_{\chi} } ) =  \numbermathbb{0} \]	
we have also that 
\[ (\chi \scalebox{0.6}{$\square$} \mathds{1}_{ Z_{\chi} } ) (p^{n}) \ne 0 \]	
which give us 
\[ \nabla (\chi \scalebox{0.6}{$\square$} \mathds{1}_{ Z_{\chi} } ) =  \numbermathbb{0} \]	
we also know that $ \mathds{1}_{ Z_{\chi} } $ is an idempotent element , so we have 
\[ \nabla ( \mathds{1}_{ Z_{\chi} } ) = \numbermathbb{0} \]
which allow us to simplifie the equation 
\[ \nabla (\chi \scalebox{0.6}{$\square$} \mathds{1}_{ Z_{\chi} } ) = \nabla (\chi) \scalebox{0.6}{$\square$} \nabla(\mathds{1}_{ Z_{\chi} } ) = \nabla (\chi) =\numbermathbb{0} \]	
so we have 
\[ \nabla (\chi) =\numbermathbb{0} \]

\end{proof}

\section{Conclusion}

In this work, we have explored a possible factorization of some Dirichlet series and how to generalize it by constructing the ring $  (\mathbb{M}, \square, \times) $ of multiplicative functions.
We also proved that we can not get a similar ring by changing the weight of the convolution $ \square $ used, the indicator of the pairs of coprime integers is the only weight that generates this ring. 
$ \\ $
Several previous works have studied $\square $ convolution as a product \cite{AlkanZaharescuandZaki} \cite{SchwabSilberberg} \cite{SchwabSilberberg2} \cite{Yokom} \cite{CashwellEverett} . It is even called the unitary product in some references, but we use it in this article as a sum in the ring $  (\mathbb{M}, \square, \times) $ which has led us to different structures.
$ \\ $
$ \\ $
This allowed us to have:
\begin{itemize}
    \item A factorization of the Dirichlet series in some cases
    \item The zeta function being a special case of the dirichlet series, a generalization of a Hardy formula has been established
    \item Several functional identities internal to the ring have been found
       \item A general result on derivations and Dirichlet characters
\end{itemize}

\section{Appendix}

In this section we will build a vector space based on the ring $ (\mathbb{M},  \scalebox{0.6}{$\square$}  , \circ ) $ that could help us better understand some identities
\begin{dfn}
	We define the operation $ \circ $ as an external operation :
	\[
	\forall F \in \mathbb{M} , \forall \lambda \in \mathbb{C} , \forall n \in \mathbb{N}^{\star} ~~
	( \lambda \circ F ) (n) = \prod_{p | n} \lambda \cdot F(p^{v_{p}(n)})
	\]

\end{dfn}

\begin{prop}
$(\mathbb{M},  \scalebox{0.6}{$\square$}  , \circ ) ~ \, \textnormal{is a }\mathbb{C} \textnormal{-vectorial space }  $
\end{prop}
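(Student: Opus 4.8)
The plan is to leverage the ring structure that is already in place. By the Ring unicity theorem (Theorem~\ref{thm:unicity}), taken with $W$ equal to the coprimality indicator, $(\mathbb{M}, \square, \times)$ is a commutative ring; in particular its additive part $(\mathbb{M}, \square)$ is an abelian group, whose zero element is $\delta_{1} = \mathds{1}_{\varnothing}$ (the $\square$-identity from Lemma~\ref{lem:identity}) and whose $\square$-inverse of $F$ is $I_{F}$ with $I_{F}(n) = (-1)^{\omega(n)} F(n)$ (Lemma~\ref{lem:inverse}). Thus only two things remain to establish: that $\circ$ is well defined, i.e.\ $\lambda \circ F \in \mathbb{M}$, and that the four scalar-multiplication axioms hold.

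The key simplification is to rewrite the external operation in closed form. Since $F$ is multiplicative and $F(n) = \prod_{p \mid n} F(p^{v_{p}(n)})$, the definition gives
\[ (\lambda \circ F)(n) = \prod_{p \mid n} \lambda \cdot F(p^{v_{p}(n)}) = \lambda^{\omega(n)} F(n). \]
Two further observations make every verification routine. First, on a prime power the only coprime factorisations of $p^{n}$ are $(1, p^{n})$ and $(p^{n}, 1)$, so $[F \square G](p^{n}) = F(p^{n}) + G(p^{n})$; that is, $\square$ acts as ordinary addition on prime powers. Second, a multiplicative function is entirely determined by its values on prime powers. Since $\omega(p^{n}) = 1$, the operation $\circ$ restricts on prime powers to honest scalar multiplication by $\lambda$.

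With these in hand I would verify the axioms by restriction to prime powers. Well-definedness follows from $\omega(1) = 0$, giving $(\lambda \circ F)(1) = 1$, together with the additivity $\omega(ab) = \omega(a) + \omega(b)$ for coprime $a, b$. For the two distributive laws, evaluating at $p^{n}$ turns $\lambda \circ (F \square G)$ into $\lambda\bigl(F(p^{n}) + G(p^{n})\bigr)$ and $(\lambda \circ F) \square (\lambda \circ G)$ into $\lambda F(p^{n}) + \lambda G(p^{n})$, which agree; likewise $(\lambda+\mu) \circ F$ and $(\lambda \circ F) \square (\mu \circ F)$ both reduce to $(\lambda+\mu) F(p^{n})$. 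The compatibility $(\lambda\mu) \circ F = \lambda \circ (\mu \circ F)$ and the unit law $1 \circ F = F$ are immediate from the closed form $\lambda^{\omega(n)} F(n)$, and hold for every $n$ at once.

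The one point requiring genuine care, rather than a deep obstacle, is the prime-power reduction itself: before concluding equality of two expressions from their agreement on prime powers, I must confirm that both sides lie in $\mathbb{M}$. This is exactly where the stability of $\square$ (Lemma~\ref{lem:stability}) and the well-definedness of $\circ$ are invoked. Beyond this bookkeeping there is no real obstacle; the essential phenomenon is that the nonlinear factor $\lambda^{\omega(n)}$ collapses to the linear factor $\lambda$ on prime powers, which is precisely what forces the distributive laws to hold.
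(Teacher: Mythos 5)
Your proposal is correct and takes essentially the same route as the paper: obtain the abelian group $(\mathbb{M},\square)$ from the ring structure, then check the four scalar-multiplication axioms for $\circ$. The only difference is one of completeness --- the paper merely lists those axioms as holding, whereas you actually verify them via the closed form $(\lambda \circ F)(n) = \lambda^{\omega(n)}F(n)$ and reduction to prime powers, justified by the stability of $\square$ and the well-definedness of $\circ$.
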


\begin{proof}
We have that $ (\mathbb{M},  \scalebox{0.6}{$\square$} ) $ is a commutative group , and the operation $ \circ  $ verifies the following assertions :
\[  
\forall F,G \in \mathbb{M} , \forall \lambda, \mu  \in \mathbb{C} ~ : ~ 
\begin{array}{l rcl}
\lambda \circ  (F \scalebox{0.6}{$\square$} G) 	= 	(\lambda \circ  F) \scalebox{0.6}{$\square$} (\lambda \circ  G)  \\
(\lambda \scalebox{0.6}{$\square$} \mu) \circ  F 	= 	(\lambda \circ  F) \scalebox{0.6}{$\square$} (\mu \circ  F) \\
(\lambda\mu) \circ  F 	= 	\lambda \circ  (\mu \circ  F) \\
1 \circ  F 	= 	F \\
\end{array}
\]
which give that  $ (\mathbb{M},  \scalebox{0.6}{$\square$}  , \circ ) ~ \, \textnormal{is a }\mathbb{C} \textnormal{-vectorial space } $
\end{proof}

\begin{prop}
$(\mathbb{M},  \scalebox{0.6}{$\square$}  , \circ ) $ \textnormal{is also} $\mathbb{R} $ \textnormal{-vectorial space by taking the same definition of the external operation over the field} $ \mathbb{R}$ \\

\end{prop}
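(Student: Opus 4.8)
The plan is to derive the $\mathbb{R}$-vector space structure from the $\mathbb{C}$-vector space structure of the previous proposition by \emph{restriction of scalars}, using only the fact that $\mathbb{R}$ is a subfield of $\mathbb{C}$. The guiding principle is that every axiom already verified for arbitrary complex scalars holds \emph{a fortiori} for arbitrary real scalars, so essentially nothing new has to be proved.

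First I would recall that $(\mathbb{M}, \square)$ is already a commutative group, and that this underlying additive group does not depend on the choice of scalar field: associativity and commutativity come from Lemmas \ref{lem:associativity} and \ref{lem:commutativity}, the identity element $\delta_{1} = \mathds{1}_{\varnothing}$ from Lemma \ref{lem:identity}, and the existence of inverses from Lemma \ref{lem:inverse}. Hence this group is reused unchanged, and the only thing that must be re-examined is the external operation.

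Next I would check that $\circ$ restricts cleanly to $\mathbb{R} \times \mathbb{M}$. Since $\circ$ is defined on all of $\mathbb{C} \times \mathbb{M}$ and its output lies in $\mathbb{M}$, the restriction to real $\lambda \in \mathbb{R} \subset \mathbb{C}$ is automatically well-defined; in particular $(\lambda \circ F)(1) = 1$ by the empty-product convention, so the result is still multiplicative. I would then invoke the four compatibility identities established in the proof of the $\mathbb{C}$-case,
\[
\forall F, G \in \mathbb{M},\ \forall \lambda, \mu \in \mathbb{R} :\quad
\begin{array}{l}
\lambda \circ (F \square G) = (\lambda \circ F) \square (\lambda \circ G), \\
(\lambda + \mu) \circ F = (\lambda \circ F) \square (\mu \circ F), \\
(\lambda \mu) \circ F = \lambda \circ (\mu \circ F), \\
1 \circ F = F,
\end{array}
\]
which, being proved for all complex $\lambda, \mu$, specialise immediately to real $\lambda, \mu$. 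I expect no genuine obstacle here: the statement reduces to the observation that passing to a subfield of scalars never destroys the vector space axioms, and the only point meriting a sentence is the confirmation that the restricted operation still takes values in $\mathbb{M}$, which is immediate.
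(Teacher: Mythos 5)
Your proposal is correct and follows exactly the paper's own route: the paper's proof is the one-line observation that ``the proof on $\mathbb{C}$ holds for $\mathbb{R}$ by modifying the set of the external operation,'' i.e.\ restriction of scalars to the subfield $\mathbb{R} \subset \mathbb{C}$. You simply spell out the details (well-definedness of the restricted operation and the specialisation of the four axioms) that the paper leaves implicit.
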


\begin{proof}
The proof on $ \mathbb{C} $ hold for  $ \mathbb{R} $ by modifying the set of the external operation 
\end{proof}

\begin{rem}

$(\mathbb{M},  \scalebox{0.6}{$\square$}  , \circ )  $ \textnormal{over} $\mathbb{R}$ \textnormal{is a  vector subspace of} $(\mathbb{M},  \scalebox{0.6}{$\square$}  , \circ ) $ \textnormal{over} $\mathbb{C}$

\end{rem}

\begin{thm}

$ \mathbb{M}_{\mathbb{C}} = \mathbb{M}_{\mathbb{R}} \oplus i \mathbb{M}_{\mathbb{R}} $ 

\end{thm}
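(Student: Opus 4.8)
The plan is to exploit the single structural fact that a multiplicative function is completely determined by its values on the prime powers $p^{n}$, $n\geq 1$, together with the observation that on each prime power both vector-space operations collapse to the ordinary operations of $\mathbb{C}$. Indeed, for any $F,G\in\mathbb{M}$, any $\lambda\in\mathbb{C}$ and any prime power $p^{n}$ the only coprime factorisations of $p^{n}$ are $1\cdot p^{n}$ and $p^{n}\cdot 1$, so that
\[ [F \square G](p^{n}) = F(p^{n})+G(p^{n}), \]
while $\omega(p^{n})=1$ gives
\[ (\lambda \circ F)(p^{n}) = \lambda\, F(p^{n}); \]
both laws also fix the value $1$ at the argument $1$. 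Hence the evaluation map $F\mapsto\bigl(F(p^{n})\bigr)_{p\in\mathbb{P},\,n\geq 1}$ identifies $(\mathbb{M},\square,\circ)$ with a product of copies of $\mathbb{C}$ indexed by prime powers, sending $\square$ to coordinatewise addition, $\circ$ to coordinatewise scalar multiplication, and the zero vector $\delta_{1}$ to the all-zero family. The theorem will then be the prime-power-wise version of the trivial splitting $\mathbb{C}=\mathbb{R}\oplus i\mathbb{R}$.

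Here $\mathbb{M}_{\mathbb{R}}$ denotes the real-valued elements of $\mathbb{M}$; I would first confirm, using the two displayed formulas, that it is an $\mathbb{R}$-subspace of $\mathbb{M}_{\mathbb{C}}$ (the $\square$-sum and a real $\circ$-multiple of real-valued multiplicative functions are again real-valued, and $\delta_{1}$ is real-valued), and that $i\mathbb{M}_{\mathbb{R}}=\{\,i\circ H : H\in\mathbb{M}_{\mathbb{R}}\,\}$ consists exactly of those multiplicative functions that are purely imaginary on every prime power, since $(i\circ H)(p^{n})=i\,H(p^{n})$.

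\textbf{Existence.} Given $F\in\mathbb{M}_{\mathbb{C}}$, I would define $G$ and $H$ on prime powers by $G(p^{n})=\Re F(p^{n})$ and $H(p^{n})=\Im F(p^{n})$ and extend them multiplicatively; then $G,H\in\mathbb{M}_{\mathbb{R}}$ and, for every prime power,
\[ [G \square (i\circ H)](p^{n}) = \Re F(p^{n}) + i\,\Im F(p^{n}) = F(p^{n}). \]
As both sides are multiplicative and agree at $1$, they coincide, so $F=G\square(i\circ H)\in\mathbb{M}_{\mathbb{R}}\oplus i\mathbb{M}_{\mathbb{R}}$. \textbf{Directness.} If $A\in\mathbb{M}_{\mathbb{R}}\cap i\mathbb{M}_{\mathbb{R}}$, then at each prime power $A(p^{n})$ is simultaneously real and purely imaginary, hence $0$; therefore $A=\delta_{1}$, the zero vector, and the sum is direct.

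The only step demanding genuine care is the correct reading of the two operations on prime powers — that $\square$ plays the role of vector addition and $\circ$ that of the scalar action — and the verification that prescribing arbitrary values on prime powers yields a well-defined element of $\mathbb{M}$; once these are in place the statement reduces coordinatewise to $\mathbb{C}=\mathbb{R}\oplus i\mathbb{R}$, so I expect no analytic or combinatorial obstacle.
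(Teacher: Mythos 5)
Your proof is correct. Note, however, a peculiarity of the source: the paper states this theorem bare, with no proof at all --- it is followed only by a remark on its usefulness --- so there is no argument in the paper to compare yours against; your proposal supplies exactly what the paper omits. Your route is the natural one and is consistent with the surrounding material: the evaluation map $F\mapsto (F(p^{n}))_{p\in\mathbb{P},\,n\geq 1}$ onto families indexed by prime powers turns $\square$ into coordinatewise addition (the only coprime factorisations of $p^{n}$ are $1\cdot p^{n}$ and $p^{n}\cdot 1$) and turns $\circ$ into coordinatewise scaling, after which both existence and directness reduce to $\mathbb{C}=\mathbb{R}\oplus i\mathbb{R}$ in each coordinate. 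You also handle the two genuine subtleties correctly: first, the zero vector of $(\mathbb{M},\square,\circ)$ is $\delta_{1}$ (the $\square$-identity from the paper's identity lemma), not the identically zero function, so directness must mean $\mathbb{M}_{\mathbb{R}}\cap i\mathbb{M}_{\mathbb{R}}=\{\delta_{1}\}$, which is exactly what vanishing on all prime powers gives; second, $i\circ H$ is the function $n\mapsto i^{\omega(n)}H(n)$, not $n\mapsto iH(n)$, so membership in $i\mathbb{M}_{\mathbb{R}}$ can only be tested on prime powers, which is how you state it. The one point you flag as needing verification --- that arbitrarily prescribed values on prime powers yield a well-defined element of $\mathbb{M}$ --- is immediate from setting $F(n)=\prod_{p\mid n}F(p^{v_{p}(n)})$, and with that your argument is complete.
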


\begin{rem}

This could be usefull to project element over the vector subspace $ \mathbb{M}_{\mathbb{R}} $.
\end{rem}


\bibliographystyle{amsplain}
\bibliography{references/references}

\end{document}